\documentclass[12pt]{amsart}
\usepackage{fullpage}
\usepackage{amssymb,amsmath,amsthm,bbold,mathtools,bm,extpfeil}
\usepackage{enumitem}
\usepackage{comment}
\usepackage{tikz} 
\usetikzlibrary{cd}
\usetikzlibrary{arrows}

\newtheorem{defn}{Definition}[section]

\newtheorem{rmk}[defn]{Remark}
\newtheorem{lem}[defn]{Lemma}
\newtheorem{prop}[defn]{Proposition}
\newtheorem{thm}[defn]{Theorem}

\numberwithin{equation}{section}

\theoremstyle{plain}

\newcommand{\bfd}{\mathbf{d}}
\newcommand{\bfi}{\mathbf{i}}
\newcommand{\bfh}{\mathbf{h}}
\newcommand{\bfp}{\mathbf{p}}
\newcommand{\bfq}{\mathbf{q}}
\newcommand{\bfr}{\mathbf{r}}
\newcommand{\bfs}{\mathbf{s}}

\newcommand{\bfdelta}{\bm{\delta}}

\newcommand{\bfC}{\mathbf{C}}
\newcommand{\bfF}{\mathbf{F}}

\newcommand{\bfW}{\mathbf{W}}

\newcommand{\fg}{\mathfrak{g}}
\newcommand{\fh}{\mathfrak{h}}

\newcommand{\fk}{\mathfrak{k}}

\newcommand{\fm}{\mathfrak{m}}
\newcommand{\fn}{\mathfrak{n}}

\newcommand{\sE}{\mathcal{E}}
\newcommand{\sF}{\mathcal{F}}

\newcommand{\sI}{\mathcal{I}}

\newcommand{\sM}{\mathcal{M}}

\newcommand{\sO}{\mathcal{O}}
\newcommand{\sR}{\mathcal{R}}
\newcommand{\sT}{\mathcal{T}}

\newcommand{\R}{\mathbb{R}}

\newcommand{\pr}{\mathrm{pr}}

\newcommand{\Sym}{\mathrm{Sym}}
\newcommand{\ev}{\mathrm{ev}}
\newcommand{\Hom}{\mathrm{Hom}}
\newcommand{\Gr}{\mathrm{Gr}}

\newcommand{\CE}{\mathrm{CE}}
\newcommand{\id}{\mathrm{id}}

\newcommand{\sh}{\mathrm{S}}
\newcommand{\psh}{\mathrm{P}}

\newcommand{\wSym}{\widehat{\Sym}}

\newcommand{\wotimes}{\widehat{\otimes}}
\newcommand{\wOmega}{\widehat{\Omega}}

\newcommand{\uR}{{R^\sharp}}
\newcommand{\uS}{{S^\sharp}}
\newcommand{\uC}{{C^\sharp}}

\usepackage{hyperref}
\usepackage[alphabetic,initials]{amsrefs}

\title{Homotopy theory for curved $L_\infty$ spaces}
\author{Shuhan Jiang}
\address{
	Department of Mathematics, University of Zurich, Winterthurerstrasse 190, CH-8057 Zurich, Switzerland
}
\email{shuhan.jiang@math.uzh.ch}

\begin{document}
	
	\begin{abstract}
		This paper proves that $L_\infty$ spaces over a dg manifold form a category of fibrant objects. Together with the first main result of the companion paper \cite{cattaneojiang26}, this implies that transitive $L_\infty$ algebroids over a dg manifold also form a category of fibrant objects.
	\end{abstract}
	
	\maketitle
	\setcounter{tocdepth}{1}
	\tableofcontents
	
	%Need to modify everything to be defined over a cdga.
	
	\section{Introduction}
	
	In \cite{cattaneojiang26}, we establish an equivalence between the fibered category of transitive $L_\infty$ algebroids over dg manifolds and the split fibered category of $L_\infty$ spaces over dg manifolds
	\[
	\mathbf{L_\infty Algd}_{\mathrm{fib}} \cong \mathbf{L_\infty Sp},
	\]
	which detects weak equivalences. We further construct a faithful functor
	\[
	\mathbf{Fib}\colon \mathbf{L_\infty Algd} \longrightarrow \mathbf{L_\infty Algd}_{\mathrm{fib}} \cong \mathbf{L_\infty Sp},
	\]
	which also detects weak equivalences. 
	
	In the present work, we study the homotopy theory for $L_\infty$ spaces over a dg manifold. 
	Our main result shows that $\mathbf{Fib}$ can be interpreted as a fibrant replacement functor.
	\begin{thm}
		$L_\infty$ spaces over a dg manifold form a category of fibrant objects (CFO). 
	\end{thm}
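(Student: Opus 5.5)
I will verify Brown's axioms for a category of fibrant objects on the category $\mathbf{L_\infty Sp}_{\sM}$ of $L_\infty$ spaces over a fixed dg manifold $\sM=(M,\sO_\sM)$. Recall the setting: an $L_\infty$ space over $\sM$ is a curved $L_\infty$ algebra structure on a graded vector bundle $E\to M$, encoded as a degree-one derivation $D$ of the completed symmetric algebra $\wSym_{\sO_\sM}(E^\vee)$, with $D^2=0$, extending the differential $Q$ of $\sM$. Morphisms are morphisms of the corresponding pro-dg algebras over $\sO_\sM$.

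I declare the classes as follows.
\begin{itemize}
\item \emph{Weak equivalences} are the morphisms $f\colon E\to E'$ whose linear part $f_1$ induces a quasi-isomorphism of the associated tangent complexes. Because the structures are curved, I would phrase this fiberwise, or after passing to the associated graded of the filtration by $E^\vee$-weight.
\item \emph{Fibrations} are the morphisms whose linear part $f_1\colon E\to E'$ is a surjective map of graded vector bundles.
\end{itemize}
All objects are then fibrant, and the terminal object is the zero bundle with $Q$, i.e.\ $\sM$ itself.

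The axioms would be checked in the following order.

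\textbf{(1) Two-out-of-three and isomorphisms.} These follow because weak equivalences are detected on linear parts, and linear parts compose functorially.

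\textbf{(2) Pullbacks along fibrations exist and fibrations are stable under base change.} Given a fibration $p\colon E\to B$ and any morphism $g\colon C\to B$, I first use a splitting argument. After choosing a complement, a fibration with surjective linear part can be brought, by an $L_\infty$ automorphism, into a form whose linear part is a strict projection $E\cong B\oplus K$. I would prove this by the usual inductive construction on weight in the completed symmetric algebra; this is where completeness matters. The pullback is then modeled on the bundle $C\oplus K$, with the differential transported along $g$. Its universal property is verified dually as a pushout of pro-dg algebras $\wSym(C^\vee)\wotimes_{\wSym(B^\vee)}\wSym(E^\vee)$.

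\textbf{(3) Acyclic fibrations are stable under pullback.} After the splitting, $p$ is acyclic exactly when the fiber $K$ is acyclic, i.e.\ when its linearized differential is exact. This property is inherited by the pullback, since the pullback has the same fiber $K$.

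\textbf{(4) Path objects and factorization of the diagonal.} For $E$, I take $PE = E\otimes\Omega(\Delta^1)$, i.e.\ $\wSym(E^\vee)\wotimes\Omega^\bullet([0,1])$ suitably completed, or a finite-dimensional model $E\oplus E\oplus E[-1]$ obtained by homotopy transfer. The maps are:
\begin{itemize}
\item the inclusion $E\to PE$ via constants, which is a weak equivalence by the Poincaré lemma applied to linear parts;
\item evaluation $(\ev_0,\ev_1)\colon PE\to E\times_\sM E$, which is surjective on linear parts and hence a fibration.
\end{itemize}
Because $L_\infty$ spaces must be finite rank bundles, I would use the finite model $E\oplus E\oplus E[-1]$. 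There the structure is obtained by transferring the tensor with the Whitney forms on $\Delta^1$, i.e.\ the normalized cochains $C^\bullet(\Delta^1)$ with its $C_\infty$ structure.

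\textbf{(5) Factorization lemma.} Every map factors as a weak equivalence followed by a fibration. This follows formally from (2) and (4) by Brown's factorization lemma, so it need not be verified separately.

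I expect the main obstacle to be steps (2)--(3) in the presence of curvature. The splitting or normal-form lemma for fibrations must be carried out over the non-trivial base $\sO_\sM$, whose differential $Q$ does not preserve any splitting of $E$. The curvature term also obstructs naive fiberwise arguments.

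I would first try to work with the $\sO_\sM$-linear weight filtration. The curvature and $Q$ contribute only to filtration shifts, and the inductive construction of the automorphism solves a linear equation at each weight. That equation is solvable because $f_1$ is surjective, so a smooth right inverse exists by choosing a connection-independent splitting of vector bundles.

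If this fails, the fallback is to replace objects by their minimal models via homotopy transfer and work with fibrations in that normalized form.
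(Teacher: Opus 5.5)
There is a genuine gap, and it is in the setup: you have the wrong base. In this paper an $L_\infty$ space over $\sM$ is a locally free sheaf $\fg$ of modules over the completed de Rham algebra $\wOmega_\sM$, with a curved $L_\infty$ structure over $\sR_\sM=(\wOmega_\sM, d_{dR}+L_{Q_\sM})$. Its curvature lies in $F^1$ for the $\sI$-adic filtration, $\sI=\wOmega_\sM^{>0}$. You work over $(\sO_\sM,Q)$ instead, where nothing makes the curvature small. This is why your weak equivalences are never actually defined. Since $l_1^2=-l_2(l_0,\cdot)\neq 0$, there is no ``tangent complex'', fiberwise or otherwise. The $E^\vee$-weight filtration does not help either: the curvature component of $D$ lowers weight, so $D$ does not preserve it. In the paper, a weak equivalence is a morphism with $\Gr\,\phi_1$ a quasi-isomorphism, and this makes sense precisely because $l_0\in F^1$ forces $(\Gr\,l_1)^2=0$.

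The same filtration drives every step you list:
\begin{itemize}
\item the strictifying automorphism of Lemma~\ref{fibstr}, with $\Psi_n=s\circ\phi_n$ (including $n=0$), is invertible because these components raise filtration, e.g.\ $\phi_0\in F^1$;
\item stability of trivial fibrations comes from the block-triangular form of $\Gr\,l_1$ on $\fk\oplus\fn$;
\item the Whitney-form path object needs the perturbation series $(\id+\mu\bfh)^{-1}$ in the transfer to converge.
\end{itemize}
Your weight-by-weight induction over $\sO_\sM$ is not triangular once curvature is present. You flag this as ``the main obstacle'' but give no mechanism; the missing idea is exactly this $\sI$-adic filtration. Once the base is corrected, your outline is the paper's: strictify fibrations, pull back on $\fk\oplus\fn$, detect acyclicity on the fibre, and transfer $\Omega_1\wotimes\fg$ to $W_1\otimes\fg=\fg\oplus\fg\oplus\fg[-1]$.

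You also take for granted that $K=\ker f_1$ is again an object and that $f_1$ has a right inverse ``as vector bundles''. But $f_1$ is a morphism of $\wOmega_\sM$-modules, not a bundle map over $M$: it has components of positive form degree and involves the non-body coordinates of $\sO_\sM$. That its kernel is locally free of finite rank is Proposition~\ref{kerOmega}. The proof writes both modules as $\wOmega_\sM\otimes_{\sO_\sM}\sE$ and reduces to the form-degree-zero part via $\widetilde{\rho_0}=\rho\circ(\id+\widetilde{s}(\rho-\widetilde{\rho_0}))^{-1}$. It then invokes Lemma~\ref{kerOM} for surjections of locally free $\sO_\sM$-modules. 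Without this, your pullback need not be an $L_\infty$ space.

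The paper packages this, together with the existence of global splittings, through the fully faithful global sections functor (Proposition~\ref{gsf}). That turns the problem into one about finitely generated projective curved $L_\infty$ algebras over $R_\sM$. Finally, for your finite path object you still need to check that $\fg\to W_1\otimes\fg\to\fg\times\fg$ is the diagonal. The paper does this using $\bfi_\mu\bfp_\mu=\id-[D,\bfs_\mu]$ and the side conditions $\bfs\circ\iota=\ev\circ\bfs=0$.
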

	
	The proof proceeds by using the global sections functor to translate the problem into an algebraic one, and then applying techniques from the theory of $L_\infty$ algebras over a field \cites{getzler2009, rogers2020explicit,rogers2023complete,getzler2025higherholonomycurvedlinftyalgebras}, adapted to the setting of filtered commutative differential graded algebras (filtered cdgas). In particular, we provide in the appendices a version of the homotopy transfer theorem for curved $L_\infty$ algebras over filtered cdgas.
	%, needed for the construction of path space objects.
	
	It is known that dg manifolds also form a CFO \cites{Behrend2020thx,carchedi2023derivedmanifoldsdifferentialgraded}. It is natural to ask whether the base and fiber CFO structures of $\mathbf{L_\infty Sp}$ assemble into a total CFO structure. Further evidence for this expectation comes from the fact that $L_\infty$ groupoids in the Banach manifold setting form an incomplete CFO \cite{rogers2020homotopy}. 
	%The incompleteness arises from the lack of pullbacks in the category of manifolds, a deficiency that is remedied in the setting of dg manifolds.
	
	%dg manifolds has a site structure inherited from the category of manifolds.
	
 	\subsection*{Acknowledgments}
	
	The author is grateful to Ezra Getzler for a helpful discussion.
	
	The author acknowledges partial support of the SNF Grant No. 200021 227719 and of the Simons Collaboration on Global Categorical Symmetries. This research was (partly) supported by the NCCR SwissMAP, funded by the Swiss National Science Foundation. This article is based upon work from COST Action 21109 CaLISTA, supported by COST (European Cooperation in Science and Technology) (www.cost.eu), MSCA-2021-SE-01-101086123 CaLIGOLA, and MSCA-DN CaLiForNIA-101119552.

	\section{Curved \texorpdfstring{$L_\infty$}{L-infinity}  spaces}
	
	Throughout this paper, all filtrations are assumed to be descending, separated, exhaustive, and complete. 
	
	\subsection{Curved \texorpdfstring{$L_\infty$}{L-infinity}  algebras}
	
	Let $R=(\uR, D_R)$ be a filtered commutative differential graded algebra (filtered cdga) over $\R$, whose underlying filtration is induced by a proper dg ideal $I$:
	\[
	F^p \uR = 
	\begin{cases}
		I^p, & p \geq 1,\\
		\uR, & p \leq 0.
	\end{cases}
	\]
	
	\begin{defn}
		A \emph{curved $L_\infty$ algebra} over $R$ is a filtered graded $\uR$-module $\fg$ together with
		\begin{itemize}
			\item an element of degree $1$
			\[
			l_0 \in F^1 \fg[1];
			\]
			\item a filtration-preserving $\R$-linear map of degree $1$
			\[
			l_1\colon \fg[1] \rightarrow \fg[1];
			\]
			\item a graded symmetric filtration-preserving multi-$\uR$-linear map of degree $1$
			\[
			l_n\colon \fg[1]^{\times n} \longrightarrow \fg[1]
			\]
			for each $n \ge 2$.
		\end{itemize}
		These multi-brackets satisfy the following identities:
		\begin{itemize}
			\item for every $r \in R^\sharp$ and $x \in \fg[1]$,
			\[
			l_1(r x)
			=
			D_R(r)\, x
			+
			(-1)^{|r|} r\, l_1(x);
			\]
			\item for every $n \ge 0$ and  $x_1,\dots,x_n \in \fg[1]$,
			\[
			\sum_{p+q=n}
			\sum_{\sigma \in \Sigma_n}
			\frac{\epsilon(\sigma)}{p!\, q!}
			\,
			l_{q+1}\big(
			l_p(x_{\sigma(1)},\dots,x_{\sigma(p)}),
			x_{\sigma(p+1)},\dots,x_{\sigma(n)}
			\big)
			=
			0,
			\]
			where $\epsilon(\sigma)$ is the Koszul sign of the permutation $\sigma$. In particular, one has 
			\[
			l_1(l_0) = 0, \quad \text{and} \quad l_2(l_0, x) + l_1(l_1(x))=0.
			\]
		\end{itemize}
		%Such $\fg$ is called \emph{curved} if $l_0 \neq 0$.
		%We sometimes write $\fg_R$ to indicate that $\fg$ is a curved $L_\infty$ algebra over $(R, D_R)$.
	\end{defn}	
	
	Since $l_0 \in F^1\fg[1]$, the pair $(\fg[1], l_1)$ defines a curved complex with $\uR$-linear curvature 
	\[
	r\coloneqq-l_2(l_0, \cdot);
	\]
	see Appendix \ref{sec:hpt} for our notion of curved complexes.
	
	\begin{defn}
		A \emph{morphism} $\phi\colon \fg_1 \rightarrow \fg_2$ between curved $L_\infty$ algebras over $R$ consists of 
		\begin{itemize}
			\item an element of degree $0$
			\[
			\phi_0 \in F^1 \fg_2[1];
			\]
			\item a graded symmetric filtration-preserving multi-$\uR$-linear map of degree $0$
			\[
			\phi_n\colon \fg_1[1]^{\times n} \rightarrow \fg_2[1]
			\]
			for each $n \geq 1$.
		\end{itemize}
		These multi-linear maps intertwine the multi-brackets on $\fg_1$ and $\fg_2$: 
		\begin{align*}
			&\sum_{p+q=n} \sum_{\sigma \in \Sigma_n}\frac{\epsilon(\sigma)}{p!q!} \,
			\phi_{q+1}\big(
			l_p(x_{\sigma(1)},\dots),\dots,x_{\sigma(n)}
			\big) \\
			&= \sum_{k=0}^{\infty} \sum_{n_1+\cdots+n_k=n} \sum_{\sigma\in \Sigma_n} \frac{\epsilon(\sigma)}{k!n_1!\cdots n_k!}
			\, l_k \big(
			\phi_{n_1}(x_{\sigma(1)},\dots),
			\dots,
			\phi_{n_k}(\dots,x_{\sigma(n)})
			\big)
		\end{align*}
		for every $n \ge 0$ and  $x_1,\dots,x_n \in \fg_1[1]$. In particular, one has 
		\begin{align*}
			&\phi_1(l_0) =  l_0 + \sum_{k=1}^\infty \frac{1}{k!}l_k(\phi_0, \dots, \phi_0), \\
			&\phi_1(l_1(x)) - l_1(\phi_1(x))
			=
			-\phi_2(l_0,x) 
			+ \sum_{k = 1}^\infty \frac{1}{k!}\, l_{k+1}(\phi_0,\dots, \phi_0, \phi_1(x)).
		\end{align*}
		Such $\phi$ is called \emph{strict} if $\phi_n = 0$ for all $n \neq 1$.
	\end{defn}
	\begin{rmk}
		The above notion of morphisms easily extends to morphisms of curved $L_\infty$ algebras over different base filtered cdgas via completed base change.
	\end{rmk}
	\begin{rmk}
		The composition $\phi \circ \psi$ of two morphisms $\psi \colon \fh \to \fg$ and $\phi \colon \fg \to \fm$ is given by
		\[
		(\phi \circ \psi)_n(x_1, \dots, x_n)
		=
		\sum_{k=0}^{\infty}
		\sum_{n_1 + \cdots + n_k = n}
		\sum_{\sigma \in \Sigma_n}
		\frac{\epsilon(\sigma)}{k!n_1! \cdots n_k!}
		\, \phi_k\!\Big(
		\psi_{n_1}(x_{\sigma(1)}, \dots), \dots, \psi_{n_k}(\dots, x_{\sigma(n)})
		\Big).
		\]
	\end{rmk}

	Since $l_0 \in F^1 \fg_1[1]$ and $\phi_0 \in F^1 \fg_2[1]$, we have
	\[
	\phi_1(l_1(x)) - l_1(\phi_1(x)) \in F^{p+1} \fg_2[1]
	\]
	for any $x \in F^p \fg_1[1]$. Thus, 
	\[
	\phi_1\colon (\fg_1[1],l_1) \rightarrow (\fg_2[1], l_1)
	\]
	defines a morphism of curved complexes.

	\begin{defn}
		A morphism $\phi=\{\phi_n\}_{n=0}^\infty$ between curved $L_\infty$ algebras $\fg_1$ and $\fg_2$  is called a \emph{weak equivalence} if 
		\[
		\Gr\, \phi_1 \colon (\Gr\, \fg_1[1], \Gr\, l_1) \longrightarrow(\Gr\, \fg_2[1], \Gr\, l_1)
		\]
		is a quasi-isomorphism.
	\end{defn}
	
	A \emph{filtered cocommutative dg coalgebra} $C=(\uC,D_C)$ over $R$ is a filtered dg module $C$ over $R$ equipped with a compatible graded cocommutative coalgebra structure on $\uC$ over $\uR$; that is, a coproduct and a counit
	\[
	\Delta \colon \uC \longrightarrow \uC \otimes_\uR \uC,
	\qquad
	\epsilon \colon \uC \longrightarrow \uR,
	\]
	compatible with both the differentials and the filtrations. Explicitly,
	\[
	(D_C \otimes \id + \id \otimes D_C)\circ \Delta
	=
	\Delta \circ D_C,
	\qquad
	D_R \circ \epsilon
	=
	\epsilon \circ D_C,
	\]
	and 
	\[
	\Delta(F^p \uC) \subset \sum_{i+j=p} F^i \uC \otimes_\uR F^j \uC,
	\qquad
	\epsilon(F^p \uC) \subset F^p \uR.
	\]
	%For simplicity, we write $C/R$ to denote a filtered cocommutative dg coalgebra over $(R,D_R)$.
	
	A \emph{morphism} between filtered cocommutative dg coalgebras $C_1$ and $C_2$ over $R$ is a morphism of graded cocommutative coalgebras over $\uR$ 
	\[
	\Phi\colon C_1^\sharp \rightarrow C_2^\sharp
	\]
	compatible with the differentials and filtrations; that is,
	\[
	\Phi(F^p \uC_1) \subset F^p \uC_2, \qquad \Phi \circ D_{C_1} = D_{C_2} \circ \Phi.
	\]	
	
	Let $\fg$ be a curved $L_\infty$ algebra over $R$. We denote by
	\[
	\wSym_\uR(\fg[1])
	\]
	the completion of the graded commutative algebra $\Sym_\uR(\fg[1])$ with respect to the convolution filtration
	\[
	F^p \Sym_\uR(\fg[1]) \coloneqq \sum_{r+s=p} F_\fg^r\, \Sym_\uR(\fg[1]) \odot F_S^{s}\, \Sym_\uR(\fg[1]),
	\]
	where $\odot$ denotes the symmetric product, the filtration $F_S^{s}\, \Sym_\uR(\fg[1])$ is defined by
	\[
	F^{s}_S\, \Sym_\uR(\fg[1]) \coloneqq \bigoplus_{i=0}^{-s} \Sym_\uR^i(\fg[1]),
	\]
	and the filtration $F_\fg^r\, \Sym_\uR(\fg[1])$ is determined by
	\[
	F^r_\fg\, \Sym^0_\uR(\fg[1]) \coloneqq F^r R, \qquad F^r_\fg\, \Sym^1_\uR(\fg[1]) \coloneqq F^r \fg[1].
	\]
	
	The coproduct and counit on $\wSym_\uR(\fg[1])$ are induced by the standard graded cocommutative coalgebra structure on $\Sym_\uR(\fg[1])$, namely, for $r \in \uR$ and $x_1,\ldots,x_n \in \fg[1]$,
	\[
	\Delta(x_1 \odot \cdots \odot x_n)
	=
	\sum_{p=0}^n
	\sum_{\sigma \in \Sigma_n}
	\frac{\epsilon(\sigma)}{p!(n-p)!}\,
	(x_{\sigma(1)} \odot \cdots \odot x_{\sigma(p)})
	\otimes
	(x_{\sigma(p+1)} \odot \cdots \odot x_{\sigma(n)}),
	\]
	and
	\[
	\epsilon(r)=r,
	\qquad
	\epsilon(x_1 \odot \cdots \odot x_n)=0.
	\]
	
	The multi-brackets $\{l_n\}_{n=0}^\infty$ of $\fg$ induce a differential $D_\fg$ on $\wSym_\uR(\fg[1])$ by
	\[
	D_\fg(r) = D_R(r),
	\]
	and
	\[
	D_\fg(x_1 \odot \cdots \odot x_n) = \sum_{p=0}^n
	\sum_{\sigma \in \Sigma_n}
	\frac{\epsilon(\sigma)}{p!(n-p)!}
	\,
	l_p(x_{\sigma(1)}, \dots, x_{\sigma(p)}) \odot
	x_{\sigma(p+1)} \odot \cdots \odot x_{\sigma(n)}.
	\]
	Since $l_0 \in F^1 \fg[1]$, $D_\fg$ is compatible with the filtration on $\wSym_\uR(\fg[1])$.  It is also compatible with the coproduct and counit of $\wSym_\uR(\fg[1])$. 
	\begin{defn}
		We call the filtered cocommutative dg coalgebra over $R$
		\[
		\CE(\fg) \coloneqq \bigl(\wSym_\uR(\fg[1]), D_\fg\bigr)
		\]
		the \emph{Chevalley--Eilenberg coalgebra} of $\fg$.\footnote{Note that the filtration used here for $\CE(\fg)$ differs from that in the pro-nilpotent setting \cite{getzler2025higherholonomycurvedlinftyalgebras}.} The differential $D_\fg$ is called the \emph{Chevalley--Eilenberg differential} of $\fg$.
	\end{defn}

	A morphism $\phi\colon \fg_1 \to \fg_2$ of curved $L_\infty$ algebras over $R$ induces a morphism of filtered cocommutative dg coalgebras over $R$
	\[
	\CE(\phi)\colon \CE(\fg_1) \longrightarrow \CE(\fg_2)
	\]
	via the formula
	\[
	\CE(\phi)(x_1 \odot \cdots \odot x_n)
	=
	\sum_{k=0}^{\infty}
	\sum_{n_1 + \cdots + n_k = n}
	\sum_{\sigma \in \Sigma_n}
	\frac{\epsilon(\sigma)}{k!n_1!\cdots n_k!}\,
	\phi_{n_1}(x_{\sigma(1)},\ldots) \odot \cdots \odot
	\phi_{n_k}(\ldots,x_{\sigma(n)}).
	\]
	The functor $\CE$ embeds the category of curved $L_\infty$ algebras over $R$ as a full subcategory of the category of filtered cocommutative dg coalgebras over $R$. 
	
	\begin{defn}\label{bc}
		Let $\fg$ be a curved $L_\infty$ algebra over $R$. Let $S=(S^\sharp,D_S)$ be a filtered cdga over $R$. 
		The \emph{completed base change} of $\fg$ along $S$ is the curved $L_\infty$ algebra over $S$ given by the completed tensor product of filtered graded $\uR$-modules
		\[
		\fg_S\coloneqq
		\uS \,\widehat{\otimes}_\uR\, \fg,
		\]
		whose multi-brackets are defined on homogeneous tensors by
		\[
		l_n(\alpha_1 \otimes x_1,\ldots,\alpha_n \otimes x_n)
		=
		\begin{cases}
			1 \otimes l_0,
			& n=0, \\[6pt]
			D_S(\alpha_1)\otimes x_1
			+
			(-1)^{|\alpha_1|}
			\alpha_1 \otimes l_1(x_1),
			& n=1, \\[6pt]
			(-1)^{\sum_{i<j}|x_i||\alpha_j|}
			\,\alpha_1\cdots\alpha_n
			\otimes
			l_n(x_1,\ldots,x_n),
			& n\ge2.
		\end{cases}
		\]
	\end{defn}	
	
	For our purposes, we consider a special class of curved $L_\infty$ algebras $\fg$ over $R$ satisfying the following condition:
	\begin{itemize}
		\item $\fg$ is \emph{finitely generated projective} over $\uR$; that is, $\fg$ is a direct summand of a free graded $\uR$-module of finite total rank in the category of filtered graded $\uR$-modules.
	\end{itemize}
	Note this condition implies that the filtration on $\fg$ is \emph{strict}, i.e.
	\[
	F^p \fg = F^p \uR \cdot \fg.
	\]
	It follows that the multi-$\uR$-linearity of the multi-brackets $l_n$ on $\fg$ automatically implies compatibility with the filtration. Moreover, the ordinary tensor product $\uS \otimes_\uR \fg$ appearing in Definition \ref{bc} is automatically complete, so that
	\[
	\fg_S = \uS \otimes_\uR \fg,
	\]
	and is again finitely generated projective over $\uS$.
	%The Chevalley--Eilenberg coalgebra of $\fg_S$ can be characterized as
	%\[
	%\CE(\fg_S) \cong \big(\uS \otimes_\uR \CE(\fg)^\sharp,\; D_S \otimes 1 + 1 \otimes D_\fg\big).
	%\]
	
	We denote by $\mathbf{L_\infty Alg}(R)_{\mathrm{fgp}}$ the category of finitely generated projective curved $L_\infty$ algebras over $R$.
	
	\subsection{Curved \texorpdfstring{$L_\infty$}{L-infinity}  spaces}
	
	In \cite{cattaneojiang26}, we adapt Costello’s notion of $L_\infty$ spaces over smooth manifolds together with the notion of weak equivalences between them \cite{costello2011geometric} to the setting of dg manifolds. We briefly recall the setup.
	
	Let $\sM=(M, \sO_\sM)$ be a dg manifold. Let $\wOmega_\sM$ denote the sheaf of completed differential forms of $\sM$ on $M$.  $\wOmega_\sM$ carries an $\sI$-adic filtration induced by the ideal of completed differential forms whose projection to functions vanishes
	 \[
	\sI \coloneqq \wOmega_\sM^{>0}.
	\]
	It also carries a differential $D_\sM$ preserving $\sI$, defined by
	\[
	D_\sM  = d_{dR} + L_{Q_\sM},
	\]
	where $d_{dR}$ is the de Rham differential and $L_{Q_\sM}$ is the Lie derivative along the cohomological vector field $Q_\sM$ of $\sM$. For simplicity, we write
	\[
	\sR_\sM \coloneqq (\wOmega_\sM, D_\sM), \qquad R_\sM\coloneqq \sR_\sM(M).
	\]
	%We call the dg ringed space 
	%\[
	%\sM_{dR}\coloneqq(M, \sR_\sM)
	%\]
	%the \emph{de Rham space} of $\sM$.

	\begin{defn}
		A \emph{curved $L_\infty$ space} over $\sM$ is a pair $B\fg=(\sM,\fg)$, where $\fg$ is a sheaf of graded $\sR_\sM^\sharp$-modules, locally free of finite total rank, equipped with a curved $L_\infty$ algebra structure over $\sR_\sM$.
	\end{defn}
	
	Let $V \subset U$ be two open subsets of $M$. By definition, the restriction map 
	\[
	\operatorname{res}^U_V \colon \fg(U) \longrightarrow \fg(V)
	\]
	is required to be a strict morphism of curved $L_\infty$ algebras over $\sR_\sM(U)$. That is, we have
	\[
	l_n\bigl(\operatorname{res}^U_V(x_1), \dots, \operatorname{res}^U_V(x_n)\bigr)
	=
	\operatorname{res}^U_V\bigl(l_n(x_1, \dots, x_n)\bigr)
	\]
	for all $x_1, \dots, x_n \in \fg(U)[1]$ and all $n \ge 0$.
	 
	\begin{defn}
		A \emph{morphism} $\phi\colon B\fg_1 \to B\fg_2$ of curved $L_\infty$ spaces is a pair $\phi=(f, \phi^\sharp)$, where 
		\begin{itemize}
			\item $f\colon \sM_1 \rightarrow \sM_2$ is a dg manifold morphism;
			\item $\phi^\sharp\colon \fg_1 \rightarrow f^*\fg_2$ is a morphism of curved $L_\infty$ algebras over $\sR_{\sM_1}$, where
			\[
			f^*\fg_2 = \sR_{\sM_1} \otimes_{f^{-1} \sR_{\sM_2}} \fg_2.
			\]
		\end{itemize}
		
		Such $\phi$ is called \emph{base-fixing} if $f=\id$. It is called \emph{strict} if $\phi^\sharp$ is strict. It is called a \emph{weak equivalence} if $f$ is a weak equivalence of dg manifolds and
		\[
		\phi^\sharp_{M_1}\colon \fg_1(M_1) \rightarrow f^*\fg_2(M_1)
		\]
		is a weak equivalence of curved $L_\infty$ algebras over $R_{\sM_1}$.
	\end{defn}
	\begin{rmk}
		In \cite{cattaneojiang26}, we formulate morphisms of curved $L_\infty$ spaces over dg manifolds using the language of Chevalley--Eilenberg algebras. The two approaches are equivalent, since one can show that $\fg$ is finitely generated and projective over $\sR_\sM^\sharp$. 
	\end{rmk}
	
	We denote by $\mathbf{L_\infty Sp}(\sM)$ the category of curved $L_\infty$ spaces over $\sM$ with base-fixing morphisms. In the next section, we show that the global sections functor
	\[
	\Gamma\colon \mathbf{L_\infty Sp}(\sM) \longrightarrow \mathbf{L_\infty Alg}(R_\sM)_{\mathrm{fgp}}
	\]
	is well-defined and fully faithful.
	
	\section{Global sections functors}
	
	We begin by reviewing the global sections functor in the general setting of graded ringed spaces, following \cite{morye2013note}.\footnote{The exposition in \cite{morye2013note} is formulated for (locally) ringed spaces; its extension to the graded ringed setting considered here is straightforward.} 
	
	Let $(X, \sO_X)$ be a graded ringed space. Let $\sO_X\text{-}\mathbf{mod}$ denote the category of graded (left) $\sO_X$-modules. Let $A$ denote the graded ring of global sections $\sO_X(X)$, and $A\text{-}\mathbf{mod}$ denote the category of graded (left) $A$-modules. We have a left exact functor
	\[
	\Gamma\colon \sO_X\text{-}\mathbf{mod} \longrightarrow A\text{-}\mathbf{mod},
	\]
	called the \emph{global sections functor}, which sends each graded $\sO_X$-module $\sE$ to its global sections $\Gamma(\sE)\coloneqq\sE(X)$. We also have a right exact functor
	\[
	\sh\colon A\text{-}\mathbf{mod} \longrightarrow \sO_X\text{-}\mathbf{mod},
	\]
	which sends each graded $A$-module $E$ to the sheafification of the presheaf $\psh(E)$ given by
	\[
	U \longmapsto \sO_X(U) \otimes_A E.
	\]
	One can show that $\sh$ is left adjoint of $\Gamma$. The natural isomorphism
	\[
	\alpha_{E,\sF}:
	\Hom(\sh(E),\sF)
	\xlongrightarrow{\cong}
	\Hom(E,\Gamma(\sF))
	\]
	is given by
	\[
	(\phi\colon \sh(E) \rightarrow \sF)\longmapsto \bigl(E \rightarrow \sh(E)(X) \xrightarrow{\phi_X} \Gamma(\sF)\bigr),
	\]
	where $E = \psh(E)(X) \rightarrow \sh(E)(X)$ denotes the canonical morphism induced by sheafification.
	
	%The key observation is that
	\begin{lem}
		The restriction of $\sh$ to the subcategory of finitely generated projective graded $A$-modules 
		\[
		\mathbf{Fgp}(A) \subset A\text{-}\mathbf{mod}
		\]
		is fully faithful.
	\end{lem}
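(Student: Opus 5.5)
Since $\sh \dashv \Gamma$, full faithfulness of $\sh$ on $\mathbf{Fgp}(A)$ is equivalent to the unit
\[
\eta_E\colon E \longrightarrow \Gamma(\sh(E)) = \sh(E)(X)
\]
being an isomorphism for every $E \in \mathbf{Fgp}(A)$. Indeed, for $E, E'$ in $\mathbf{Fgp}(A)$, the map $\Hom(E,E') \to \Hom(\sh(E),\sh(E'))$ factors as
\[
\Hom(E,E') \xrightarrow{(\eta_{E'})_*} \Hom(E,\Gamma\sh(E')) \xrightarrow{\alpha^{-1}_{E,\sh(E')}} \Hom(\sh(E),\sh(E')),
\]
and the second map is a bijection by adjunction. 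So it suffices to show that $\eta_{E'}$ is an isomorphism. By the description of $\alpha$, $\eta_E$ is the canonical map $E = \psh(E)(X) \to \sh(E)(X)$ induced by sheafification.

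We handle free modules first. For $E = A$, the presheaf $\psh(A)$ is $U \mapsto \sO_X(U)\otimes_A A = \sO_X(U)$, which is already a sheaf. Hence $\sh(A) = \sO_X$, and $\eta_A$ is the identity $A = \sO_X(X)$. The same holds for shifted copies $A[k]$, since in the graded setting free modules are sums of shifts.

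Next, a free module $A^{\oplus n}$ of finite total rank. Both $\psh$ and sheafification commute with finite direct sums: $\sO_X(U)\otimes_A(-)$ is additive, and a finite direct sum of sheaves is a sheaf. Therefore $\psh(A^{\oplus n}) = \sO_X^{\oplus n}$ is already a sheaf, and $\eta_{A^{\oplus n}}$ is an isomorphism. Here the hypothesis of finite rank matters: for infinite sums, $\psh$ need not already be a sheaf and the global sections could differ.

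Finally, let $E$ be a direct summand of a finite free module $F$, so there are maps $i\colon E\to F$ and $p\colon F\to E$ with $p\circ i = \id_E$. Since $\eta$ is natural, $\eta_E$ is a retract of $\eta_F$ in the arrow category: the two squares formed with $\Gamma\sh(i)$ and $\Gamma\sh(p)$ commute, and the composite of the horizontal maps is the identity. A retract of an isomorphism is an isomorphism, so $\eta_E$ is an isomorphism.

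The main obstacle is only bookkeeping. One must check that $\eta_E$, as produced by the adjunction, really coincides with the sheafification map $E = \psh(E)(X) \to \sh(E)(X)$. One must also check that the finite-sum step survives the grading conventions, where "finite total rank" means finitely many shifted generators. Neither point is deep.
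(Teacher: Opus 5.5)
Your proof is correct, and it shares the paper's overall structure: both show that the unit $E \to \Gamma(\sh(E))$ is an isomorphism for $E \in \mathbf{Fgp}(A)$ and then conclude by adjunction. The difference lies in how that key step is justified.

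The paper asserts in one line that $\psh(E)$ is already a sheaf, because $-\otimes_A E$ commutes with the (possibly infinite) limits expressing the sheaf condition when $E$ is finitely generated projective. Hence $\sh(E)=\psh(E)$ and $\Gamma(\sh(E))=\sO_X(X)\otimes_A E = E$.

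You instead check the sheaf property only for finite free modules, where it is trivial. You then transport the isomorphism of units to direct summands by a retract argument in the arrow category. This is essentially the standard proof of the limit-commutation fact the paper invokes, so your route makes explicit where finiteness and projectivity enter. Your explicit factorization of $\Hom(E,E')\to\Hom(\sh(E),\sh(E'))$ through $(\eta_{E'})_*$ also supplies a point the paper leaves implicit: the composite bijection in its chain of isomorphisms is the map induced by $\sh$.

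What the paper's formulation buys is a slightly stronger conclusion, $\sh(E)=\psh(E)$ on the nose. This is what Proposition~\ref{gsf} later uses to identify $\sh(\fg_M)$ with a completed base change. Your argument gives this too with one extra line: $\psh(E)$ is a retract of the sheaf $\psh(F)$ in presheaves, and a retract of a sheaf is a sheaf.
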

	\begin{proof}
		Since tensoring with a finitely generated projective module commutes with projective limits, we see that $\psh(E)$ is already a sheaf for $E \in \mathbf{Fgp}(A)$. Hence $\sh(E) = \psh(E)$. 
		
		It follows that
		%$S \dashv \Gamma$ 
		\[
		\Hom(E, F) \cong \Hom(E, \Gamma(\sh(F))) \cong \Hom(\sh(E), \sh(F))
		\]
		for $E, F \in \mathbf{Fgp}(A)$.
	\end{proof} 
	
	We henceforth impose the following assumptions on the ringed space $(X,\sO_X)$:
	\begin{enumerate}[label=\textup{(\roman*)}]
		\item \label{a1} $X$ is a manifold and $\sO_X$ is a fine sheaf on $X$;
		\item \label{a2} for every surjective morphism
		\[
		\rho:\sE \longrightarrow \sF
		\]
		in the subcategory of locally free graded $\sO_X$-modules of finite total rank
		\[
		\mathbf{Vec}(X) \subset  \sO_X\text{-}\mathbf{mod},
		\]
		the kernel $\ker(\rho)$ is again in $\mathbf{Vec}(X)$.
	\end{enumerate}
	We will prove that the graded ringed space $(M, \wOmega_\sM)$ underlying a curved $L_\infty$ space $B\fg = (\sM, \fg)$ satisfies assumptions \ref{a1} and \ref{a2}.
	
	\begin{lem}\label{gff}
	    $\Gamma$ is fully faithful. Moreover, it satisfies the following conditions:
		\begin{itemize}
			\item $\sh(\Gamma(\sE)) \cong \sE$ for all $\sE \in \sO_X\text{-}\mathbf{mod}$;
			\item the restriction of $\Gamma$ to $\mathbf{Vec}(X)$	takes values in $\mathbf{Fgp}(A)$.
		\end{itemize}
	\end{lem}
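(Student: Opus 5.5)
We use assumptions \ref{a1} and \ref{a2} to show that every $\sO_X$-module is generated by global sections in a controlled way. The key input is the fineness of $\sO_X$. Choose a partition of unity $\{\rho_i\}$ in $A=\sO_X(X)$ subordinate to a cover $\{U_i\}$.

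\emph{Step 1: the counit is an isomorphism.} We show that the counit $\varepsilon_\sE\colon \sh(\Gamma(\sE)) \to \sE$ is an isomorphism for every $\sE\in \sO_X\text{-}\mathbf{mod}$. It suffices to check this on stalks, where it reads $\sO_{X,x}\otimes_A \sE(X) \to \sE_x$, since the stalk of the sheafification of $\psh(E)$ is $\sO_{X,x}\otimes_A E$.

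\emph{Surjectivity.} A germ $s_x\in\sE_x$ is represented by $s\in \sE(U)$. Choose a bump function $\chi\in A$ supported in $U$ with $\chi\equiv 1$ near $x$. Then $\chi s$ extends by zero to a global section with germ $s_x$.

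\emph{Injectivity.} Take $\sum_k a_k\otimes e_k$ with $\sum_k a_k (e_k)_x=0$. Shrink to a neighborhood $U$ on which the $a_k$ are defined and $\sum_k a_k e_k|_U=0$. Choose a bump $\chi$ as above and set $b_k \coloneqq \chi a_k\in A$ (extended by zero). Then
\[
\sum_k a_k\otimes e_k=\sum_k a_k\otimes \chi e_k'
\]
in the stalk, where $\chi$ acts as $1$ in $\sO_{X,x}$. Using a second bump $\chi'$ with $\chi'\chi=\chi'$, we can rewrite this as $\chi'\otimes \sum_k b_k e_k$, and $\sum_k b_k e_k = \chi\sum_k a_k e_k=0$ globally. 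The sign conventions of the graded case are routine.

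\emph{Step 2: full faithfulness.} From $\sh\dashv\Gamma$ and the counit being a natural isomorphism, $\Gamma$ is fully faithful by the standard categorical fact: a right adjoint is fully faithful iff its counit is an isomorphism.

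\emph{Step 3: $\Gamma$ maps $\mathbf{Vec}(X)$ into $\mathbf{Fgp}(A)$.} This is the main obstacle, a Serre--Swan type argument.
\begin{enumerate}[label=(\alph*)]
\item Let $\sE\in\mathbf{Vec}(X)$. Since $X$ is a manifold it has finite covering dimension, so $X$ admits a finite cover $U_1,\dots,U_m$ (each a disjoint union of trivializing opens) over which $\sE$ is free of finite rank. Using a partition of unity subordinate to this cover, multiply local frames by $\sqrt{\rho_i}$-type cutoffs, or by bumps $\chi_i$ with $\chi_i=1$ on $\operatorname{supp}\rho_i$. This yields finitely many global sections generating $\sE$ at every stalk, i.e.\ a surjection $\rho\colon\sO_X^{N}\to \sE$ from a free graded module of finite total rank (with appropriate degree shifts).
\item By \ref{a2}, $\sK=\ker\rho\in\mathbf{Vec}(X)$.
\item Split the sequence $0\to\sK\to\sO_X^N\to\sE\to 0$. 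Local splittings exist because $\sE$ is locally free, and they can be glued with the partition of unity, since a convex combination of splittings is again a splitting.
\end{enumerate}
Thus $\sE$ is a direct summand of $\sO_X^N$, and applying $\Gamma$ (which commutes with finite direct sums) shows that $\Gamma(\sE)$ is a direct summand of $A^N$, hence lies in $\mathbf{Fgp}(A)$.

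The delicate points are the finiteness of the cover, which uses the dimension theory of $X$, and ensuring that the constructions respect the grading. Assumption \ref{a2} is used to guarantee that the kernel stays in $\mathbf{Vec}(X)$. Strictly speaking the splitting only needs local freeness of $\sE$, but \ref{a2} keeps the argument within $\mathbf{Vec}(X)$.
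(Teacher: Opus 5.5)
Your proposal is correct and follows the paper's proof: full faithfulness comes from the counit isomorphism $\sh(\Gamma(\sE))\cong\sE$ together with $\sh\dashv\Gamma$. The second bullet is the same Sardanashvily argument (finite cover by disjoint unions of trivializing opens, a partition of unity giving a surjection from a finite free module, then a splitting glued from local ones); the only difference is that you verify the counit isomorphism directly on stalks with bump functions, whereas the paper cites Morye's Lemma 2.3 and Proposition 2.5, and although your injectivity step is muddled as written (the $e_k'$ is undefined and the second bump is unnecessary), the intended argument---replace $a_k$ by $b_k=\chi a_k\in A$, so that $\sum_k (a_k)_x\otimes e_k = 1_x\otimes\sum_k b_k e_k$ with $\sum_k b_k e_k=0$ globally---is sound.
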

	\begin{proof}
		The isomorphism $\sh(\Gamma(\sE)) \cong \sE$ can be proved using only assumption \ref{a1} on $(X,\sO_X)$. See Lemma 2.3 and Proposition 2.5 in \cite{morye2013note}. It follows that
		\[
		\Hom(\sE, \sF) \cong \Hom(\sh(\Gamma(\sE)), \sF) \cong \Hom(\Gamma(\sE), \Gamma(\sF)).
		\]
		
		We now prove the second condition using an argument due to Sardanashvily \cite{sardanashvily2001remark}. Observe that any open covering $\{U_\alpha\}$ of a manifold admits a refinement $\{U_{\beta i}\}$, where $\beta$ runs over a finite set and $i$ runs over a countable set, such that
		\[
		U_{\beta i} \cap U_{\beta j} = \emptyset \quad \text{for } i \neq j.
		\]
		Let $\sE$ be a locally free graded $\sO_X$-module of finite total rank. Let $\{U_\alpha\}$ be an open cover of $X$ over which $\sE$ trivializes. The above refinement $\{U_{\beta i}\}$ then induces a finite collection of open sets
		\[
		U_\beta \coloneqq \bigcup_i U_{\beta i}
		\]
		over which $\sE$ still trivializes.	It then follows from the partition of unity argument that there exists a finite sequence of integers $\{d_1, \dots,d_N\}$  and a surjective morphism
		\[
		\rho\colon \bigoplus_{i=1}^N\sO_X[d_i] \longrightarrow \sE.
		\]
		By assumption \ref{a2}, this fits into a short exact sequence of locally free graded $\sO_X$-modules of finite rank
		\[
		0 \longrightarrow \ker \rho \longrightarrow \bigoplus_{i=1}^N\sO_X[d_i]  \longrightarrow \sE \longrightarrow 0.
		\]
		Since all terms are fine and locally free of finite total rank, this short exact sequence splits. It follows that $\sE$ is a direct summand of a free graded $\sO_X$-module of finite total rank. Consequently, $\Gamma(\sE)$ is a finitely generated projective graded $A$-module.
 	\end{proof}
	
	Let $\mathbf{Fgp}(A)_{\mathrm{vec}}$ denote the full subcategory of $ \mathbf{Fgp}(A)$ consisting of graded $A$-modules $E$ that are isomorphic to $\Gamma(\sE)$ for some $\sE$. The two lemmas above combine to yield the following result.
	
	\begin{prop}\label{gft}
		The functors
		\[
		\Gamma\colon \mathbf{Vec}(X) \longrightarrow \mathbf{Fgp}(A)_{\mathrm{vec}},
		\qquad
		\sh\colon \mathbf{Fgp}(A)_{\mathrm{vec}} \longrightarrow \mathbf{Vec}(X)
		\]
		are well-defined and quasi-inverse equivalences.
	\end{prop}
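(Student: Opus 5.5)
To prove Proposition \ref{gft}, I will assemble the two preceding lemmas. First I check that both functors land where claimed. By the second condition of Lemma \ref{gff}, $\Gamma$ sends $\mathbf{Vec}(X)$ into $\mathbf{Fgp}(A)$. Its image is then in $\mathbf{Fgp}(A)_{\mathrm{vec}}$ by the very definition of that subcategory.

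For $\sh$, take $E \in \mathbf{Fgp}(A)_{\mathrm{vec}}$ and choose an isomorphism $E \cong \Gamma(\sE)$ with $\sE \in \mathbf{Vec}(X)$. The definition only provides some graded $\sO_X$-module $\sE$, so I will read it as $\sE \in \mathbf{Vec}(X)$, which is the intended meaning. The first condition of Lemma \ref{gff} then gives
\[
\sh(E) \cong \sh(\Gamma(\sE)) \cong \sE,
\]
so $\sh(E)$ is locally free of finite total rank. Functoriality is inherited from $\sh\colon A\text{-}\mathbf{mod} \to \sO_X\text{-}\mathbf{mod}$. The only care needed is that $\mathbf{Vec}(X)$ is a full subcategory of $\sO_X\text{-}\mathbf{mod}$, so any morphism between objects of $\mathbf{Vec}(X)$ stays inside it.

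Next I produce the two natural isomorphisms. The adjunction $\sh \dashv \Gamma$ supplies a counit $\varepsilon_\sE\colon \sh(\Gamma(\sE)) \to \sE$. Lemma \ref{gff} asserts that this map is an isomorphism; this is the isomorphism constructed in \cite{morye2013note}, and it is natural in $\sE$. It is the natural isomorphism $\sh\circ\Gamma \cong \id_{\mathbf{Vec}(X)}$.

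For the other composite I use the unit $\eta_E\colon E \to \Gamma(\sh(E))$. By the proof of the lemma on $\mathbf{Fgp}(A)$, for $E$ finitely generated projective we have $\sh(E) = \psh(E)$. Hence
\[
\Gamma(\sh(E)) = \psh(E)(X) = \sO_X(X) \otimes_A E = E,
\]
and $\eta_E$ is the identity under this identification. So $\eta$ is a natural isomorphism $\id_{\mathbf{Fgp}(A)_{\mathrm{vec}}} \cong \Gamma \circ \sh$, and the two functors are quasi-inverse equivalences.

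As a consistency check, full faithfulness of $\Gamma$ from Lemma \ref{gff} combined with essential surjectivity onto $\mathbf{Fgp}(A)_{\mathrm{vec}}$ (which holds by definition) already gives an equivalence. The unit/counit description just identifies $\sh$ as its quasi-inverse.

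The only subtle point, and the one I expect to be the main obstacle, is confirming that the isomorphism $\sh(\Gamma(\sE)) \cong \sE$ quoted from \cite{morye2013note} really is the adjunction counit, so that naturality is automatic. If that turns out unclear, I would fall back on the full-faithfulness argument above, which needs no such identification.
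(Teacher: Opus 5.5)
Your proposal is correct and follows the paper's route: the paper just combines the lemma that $\sh$ is fully faithful on $\mathbf{Fgp}(A)$ (whose proof gives $\sh(E)=\psh(E)$, hence your unit isomorphism $E\cong\Gamma(\sh(E))$) with Lemma~\ref{gff} ($\Gamma$ lands in $\mathbf{Fgp}(A)$ and $\sh\circ\Gamma\cong\id$), exactly as you do. Your reading of $\mathbf{Fgp}(A)_{\mathrm{vec}}$ with $\sE\in\mathbf{Vec}(X)$ is the intended one. Your fallback — full faithfulness of $\Gamma$ together with $\Gamma\circ\sh\cong\id$ — correctly removes the worry about whether the isomorphism from \cite{morye2013note} is the counit.
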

	If $(X,\sO_X)$ is locally ringed, one can further show that the restriction of $\sh$ to $\mathbf{Fgp}(A)$ takes values in $\mathbf{Vec}(X)$, since every projective module over a local ring is free.  In this case, $\Gamma$ and $\sh$ define quasi-inverse equivalences
	\[
	\Gamma:\mathbf{Vec}(X)\longrightarrow \mathbf{Fgp}(A),
	\qquad
	\sh:\mathbf{Fgp}(A)\longrightarrow \mathbf{Vec}(X).
	\]
	This is a version of the Serre--Swan theorem. 
	However, the graded ringed space $(M, \wOmega_\sM)$ underlying a curved $L_\infty$ space $B\fg = (\sM, \fg)$ is not, in general, locally ringed.
	
	\begin{prop}\label{kerOmega}
		The graded ringed space $(M, \wOmega_\sM)$ satisfies the assumptions \ref{a1} and \ref{a2}.
	\end{prop}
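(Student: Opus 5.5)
Assumption \ref{a1} is the easier half. The underlying space $M$ of the dg manifold $\sM$ is a manifold. The sheaf $\wOmega_\sM$ of completed differential forms is a sheaf of modules over the sheaf $C^\infty_M$ of smooth functions on $M$: it contains $\sO_\sM$, and $\sO_\sM$ receives $C^\infty_M$ via the body map, which can be split locally and patched with partitions of unity. Moreover, restriction commutes with multiplication by functions. Hence multiplication by a smooth partition of unity on $M$ acts on $\wOmega_\sM$ with the required support properties, and $\wOmega_\sM$ is fine. The only care needed is that multiplication by a bump function preserves completed forms, which holds because the completion is taken with respect to the $\sI$-adic filtration, and that filtration is $C^\infty_M$-linear.

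For assumption \ref{a2}, let $\rho\colon \sE \to \sF$ be a surjection in $\mathbf{Vec}(M)$. The claim is local, so we may work on a small open set $U$ over which $\sE$ and $\sF$ are free. The goal is to show that $\rho|_U$ admits a right inverse after possibly shrinking $U$; then $\ker\rho|_U$ is a direct summand of a free module of finite rank. The plan is to reduce modulo the ideal $\sI$ together with the ideal of odd and positive-degree coordinates. Write $\wOmega_\sM(U)$ locally as a completed graded-symmetric algebra over $C^\infty(U)$, with a nilpotent/complete augmentation ideal $\sJ$ containing $\sI$, so that $\wOmega_\sM/\sJ = C^\infty_U$. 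Then $\rho$ is given by a matrix with entries in $\wOmega_\sM(U)$.

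Reducing mod $\sJ$, the matrix $\bar\rho$ over $C^\infty_U$ is a surjection of graded free $C^\infty$-modules, degreewise, because surjectivity survives base change. Since $C^\infty_U$ is local at each point in the appropriate sense, after shrinking $U$ we can choose a right inverse $\bar s$, using Serre--Swan for ordinary vector bundles. Lift $\bar s$ to a matrix $s_0$ over $\wOmega_\sM(U)$. Then $\rho s_0 = \id - n$ with $n$ having entries in $\sJ$. Because $\wOmega_\sM$ is complete with respect to $\sJ$ (the $\sI$-adic completeness together with the nilpotence of the odd generators of $\sO_\sM$), the geometric series $\sum_k n^k$ converges, so $\rho s_0$ is invertible. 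Then $s = s_0(\rho s_0)^{-1}$ is a right inverse of $\rho$. Consequently $\ker\rho|_U \cong \operatorname{coker}(s)$ is a direct summand of $\sE|_U$, hence projective of finite rank over $\wOmega_\sM(U)$.

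The main obstacle is upgrading ``finitely generated projective direct summand'' to ``locally free'' without the local-ring property, which the paper stresses fails. The plan here is to reuse the same lifting trick. Over a smaller neighborhood, $\overline{\ker\rho}$ is a free $C^\infty$-module, being the kernel of a surjection of trivial bundles of constant rank. Choose a basis, lift it into $\ker\rho$ via the projector $\id - s\rho$, and obtain a map from a free module $\wOmega_\sM^{k}\to\ker\rho$ that is an isomorphism mod $\sJ$. Idempotent-lifting and completeness then make this map an isomorphism, again by Nakayama-type inversion of a unipotent matrix. A secondary point to check is that the completeness used in the inversion holds in the topology in which the local model of $\wOmega_\sM$ is complete, so that the series of matrices converges.
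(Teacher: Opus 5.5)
Your proposal is correct in substance, but for \ref{a2} it takes a different route from the paper.

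\textbf{Your route.} You work on a coordinate chart and reduce modulo the full augmentation ideal $\mathcal{J}$ (positive form degree together with all coordinates of nonzero degree) straight down to $C^\infty_U$. There a surjection of trivial bundles has a right inverse and a locally trivial kernel. You then lift both by inverting matrices that are unipotent modulo $\mathcal{J}$.

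\textbf{The paper's route.} The paper argues globally and only along the $\sI$-adic filtration. Lemma~\ref{lkerOmega} presents $\rho$ as a map $\wOmega_\sM\otimes_{\sO_\sM}\sE\to\wOmega_\sM\otimes_{\sO_\sM}\sF$. Its form-degree-zero component $\rho_0$ is split using Lemma~\ref{kerOM} and fineness. A perturbation along $\sI$ then identifies $\ker\rho\cong\wOmega_\sM\otimes_{\sO_\sM}\ker\rho_0$.

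\textbf{What each buys.} Your argument is self-contained: run over $\sO_\sM$ alone, it reproves Lemma~\ref{kerOM}. The paper's uses only $\sI$-adic completeness, which holds by construction. It also gives the stronger global statement that the kernel is induced from an $\sO_\sM$-module.

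\textbf{The cost of your route} is the point you flagged. $\mathcal{J}$-adic completeness of $\wOmega_\sM(U)$ does not come from nilpotence of odd generators. A $\mathbb{Z}$-graded $\sM$ can have even coordinates of nonzero degree, and also negative-degree ones, which your phrase ``odd and positive-degree coordinates'' leaves out. Completeness comes instead from $\sO_\sM$ being locally a completed symmetric algebra $C^\infty(U)\wotimes\wSym(V^\vee)$. Then $\wOmega_\sM(U)$ is a formal power series ring over $C^\infty(U)$ in finitely many graded variables. In a polynomial model your particular lift can fail. For example, take $\rho(a,b)=a(1-uv)+b\,uv$ with $|u|=2$, $|v|=-2$, and $s_0=(1,0)^{T}$; then $\rho s_0=1-uv$, which is not invertible there.

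\textbf{On \ref{a1}.} Your conclusion matches the paper's. However, patching local algebra splittings of the body map with a partition of unity does not produce an algebra map. Rely on the splitting theorem for graded manifolds, as the paper does.
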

	To verify assumption \ref{a2}, we need the following lemma, which is proved in \cite{cattaneojiang26}.
	\begin{lem}[Proposition 2.6 in \cite{cattaneojiang26}]\label{kerOM}
		Let $\sE_1$ and $\sE_2$ be locally free graded $\sO_\sM$-modules of finite total rank. Let $\rho\colon \sE_1 \to \sE_2$ be a surjective morphism of graded $\sO_\sM$-modules. Then $\ker \rho$ is again locally free of finite total rank.
	\end{lem}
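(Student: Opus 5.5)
The statement is local on $M$: it suffices to show that every point $m\in M$ has a neighbourhood $U$ on which $\ker\rho|_U$ is free of finite total rank. The plan is to split $\rho$ locally by a section and read off a frame of the kernel from a complement of that section.

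\textbf{Step 1: local frames and a transition matrix.} Shrink to a coordinate neighbourhood $U$ of $m$ on which $\sO_\sM|_U\cong C^\infty(U)\,\widehat{\otimes}\,\wSym(V^\ast)$ for a graded vector space $V$. Shrink further so that $\sE_1|_U$ and $\sE_2|_U$ are free, with homogeneous frames $\{e_a\}$ and $\{f_b\}$. Write
\[
\rho(e_a)=\sum_b \rho_{ab}\,f_b,
\]
where $(\rho_{ab})$ is a graded matrix of functions on $U$.

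\textbf{Step 2: an invertibility criterion (the main obstacle).} I expect this to be the key technical input. The claim is that a homogeneous degree-$0$ square graded matrix over $\sO_\sM(U)$ is invertible near $m$ if and only if its evaluation at $m$ is invertible. Here evaluation means composing with the projection $\sO_\sM\to C^\infty_M$ followed by evaluation at the point $m$. I would prove it by writing such a matrix as $A_0+N$, where $A_0$ has body entries and $N$ lies in the ideal generated by $V^\ast$. After shrinking $U$, $A_0$ is invertible because $\det A_0(m)\neq0$. Then $(A_0+N)^{-1}=\sum_k (-A_0^{-1}N)^k A_0^{-1}$. This series converges because $N$ is nilpotent when $V$ is concentrated in odd degrees, and is pro-nilpotent in the completed symmetric algebra in general. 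The same argument applies degree-wise to block matrices.

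\textbf{Step 3: a local splitting.} Surjectivity of the sheaf map $\rho$ implies that the fibre map $\rho(m)\colon \sE_1|_m\to\sE_2|_m$ of graded vector spaces is surjective. To see this, take a local preimage of each $f_b$ and evaluate at $m$. Choose indices $a_1,\dots,a_k$ such that $\rho(m)(e_{a_i}(m))$ form a basis of $\sE_2|_m$, with matching degrees. By Step 2, after shrinking $U$ the elements $\rho(e_{a_i})$ form a frame of $\sE_2|_U$. Define $s\colon \sE_2|_U\to\sE_1|_U$ by $s(\rho(e_{a_i}))=e_{a_i}$. Then $\rho s=\id$, so
\[
\sE_1|_U=\ker\rho\oplus \operatorname{im}s .
\]

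\textbf{Step 4: a frame for the kernel.} Since $\rho s=\id$, the map $\pi=\id-s\rho$ is a projection of $\sE_1|_U$ onto $\ker\rho$. For each index $a\notin\{a_i\}$ set
\[
e'_a\coloneqq \pi(e_a)=e_a-s\rho(e_a)\in\ker\rho .
\]
Each $\pi(e_{a_i})$ vanishes, so the $e'_a$ generate $\ker\rho$. They are linearly independent because the transition matrix from $\{e_{a_i}\}\cup\{e'_a\}$ to $\{e_a\}$ is unitriangular, hence invertible. Therefore $\ker\rho|_U$ is free of total rank $\operatorname{rk}\sE_1-\operatorname{rk}\sE_2$, which proves the lemma.
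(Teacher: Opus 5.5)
Your proof is correct. Step 2 is the right invertibility criterion: the body part $A_0$ is block-diagonal by degree, and the correction lies in the ideal of non-body coordinates, so the Neumann series converges. Steps 3--4 then correctly give a local splitting and a local frame of $\ker\rho$ of rank $\operatorname{rk}\sE_1-\operatorname{rk}\sE_2$.

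The paper does not prove this lemma in the text; it imports it from the companion paper. The natural comparison is therefore with the proof of Proposition~\ref{kerOmega}, which treats the analogous statement one level up (modules over $\wOmega_\sM$ rather than $\sO_\sM$). That argument is global:
\begin{itemize}
\item the leading component $\rho_0$ is split using fineness together with the kernel result at the level below;
\item the filtration-raising remainder $\rho-\widetilde{\rho_0}$ is absorbed by inverting $\id+\widetilde{s}\circ(\rho-\widetilde{\rho_0})$;
\item this identifies $\ker\rho\cong\wOmega_\sM\otimes_{\sO_\sM}\ker\rho_0$.
\end{itemize}

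Your Step 2 is the local, pointwise form of the same ``leading term plus pro-nilpotent correction'' idea, with the body playing the role of $\Gr^0$. Together with Steps 3--4 it amounts to Nakayama's lemma for the stalks of $\sO_\sM$.

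The advantage of your route is that it is self-contained. A global argument transplanted down to $\sO_\sM$ would need three inputs: a global splitting $C^\infty_M\to\sO_\sM$ (to write the $\sE_i$ as induced modules), the kernel theorem for surjective maps of vector bundles, and partitions of unity. Your argument needs only local coordinates and yields explicit frames. The global route, once the lower-level statement is available, gives a global splitting and describes the kernel as an induced module, which is the form the paper uses downstream.
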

	Recall that $\wOmega_\sM$ is defined as
	\[
	\wOmega_\sM = \wSym_{\sO_\sM}(\sT_\sM[1]^\vee),
	\]
	where $\sT_\sM$ is the tangent sheaf of $\sM$ and $ \wSym_{\sO_\sM}$ is the completed symmetric power of graded $\sO_\sM$-modules. In particular, every graded $\wOmega_\sM$-module is a graded $\sO_\sM$-module.
	\begin{lem}\label{lkerOmega}
		Let $\fg$ be a locally free graded $\wOmega_\sM$-module of finite total rank. Then there exists a locally free graded $\sO_\sM$-module $\sE$ of finite total rank such that
		\[
		\fg \cong \wOmega_\sM \otimes_{\sO_\sM} \sE.
		\]
	\end{lem}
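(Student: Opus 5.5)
The natural candidate for $\sE$ is the reduction of $\fg$ modulo the augmentation ideal,
\[
\sE \coloneqq \fg/\sI\fg \cong \sO_\sM \otimes_{\wOmega_\sM} \fg,
\]
where $\wOmega_\sM \to \sO_\sM$ is the projection with kernel $\sI = \wOmega_\sM^{>0}$. If $\fg|_U \cong \bigoplus_i \wOmega_\sM|_U[d_i]$ on some open $U$, then $\sE|_U \cong \bigoplus_i \sO_\sM|_U[d_i]$. Hence $\sE$ is a locally free graded $\sO_\sM$-module of finite total rank, and $\wOmega_\sM \otimes_{\sO_\sM}\sE$ is a locally free graded $\wOmega_\sM$-module of the same rank. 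The task is to produce a global isomorphism $\wOmega_\sM \otimes_{\sO_\sM}\sE \cong \fg$.

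To do this I would construct a global $\sO_\sM$-linear splitting $s\colon \sE \to \fg$ of the quotient map $\pi\colon \fg \to \sE$. Here $\fg$ is viewed as a graded $\sO_\sM$-module through $\sO_\sM \subset \wOmega_\sM$.
\begin{itemize}
\item Locally on a trivializing $U$, a splitting exists: send the basis vectors of $\sE|_U$ to the corresponding basis vectors of $\fg|_U$.
\item To globalize, observe that the splittings form a torsor over $\Hom_{\sO_\sM}(\sE, \sI\fg)$, so locally defined splittings $s_\alpha$ can be averaged. Choose a partition of unity $\{\chi_\alpha\}$ of degree-zero functions subordinate to the cover (possible since $\sO_\sM$ is fine, assumption \ref{a1}), and set $s = \sum_\alpha \chi_\alpha s_\alpha$.
\item Then $\pi\circ s = \sum_\alpha \chi_\alpha = \id$ and $s$ is $\sO_\sM$-linear, so $s$ is a global splitting.
\end{itemize}
Extending $s$ $\wOmega_\sM$-linearly gives a morphism
\[
\Phi\colon \wOmega_\sM\otimes_{\sO_\sM}\sE \longrightarrow \fg,
\qquad \omega\otimes e \longmapsto \omega\, s(e).
\]

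It remains to show that $\Phi$ is an isomorphism, which can be checked locally on a trivializing $U$. There $\Phi$ is a morphism $\bigoplus_i \wOmega_\sM|_U[d_i] \to \bigoplus_i \wOmega_\sM|_U[d_i]$, given by a matrix $A$ with entries in $\wOmega_\sM(U)$. Because $\pi\circ s=\id$, the reduction of $A$ modulo $\sI$ is the identity, so $A = 1 + N$ with $N$ having entries in $\sI$. The filtration is $\sI$-adic and complete, so the Neumann series $\sum_k (-N)^k$ converges and $A$ is invertible.

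The main obstacle I expect is this last step. One has to justify that $\wOmega_\sM(U)$ is $\sI$-adically complete, in the sense that infinite sums $\sum_k N^k$ with $N^k \in \sI^k$ make sense, even though the ringed space is not local. This follows from the definition $\wOmega_\sM = \wSym_{\sO_\sM}(\sT_\sM[1]^\vee)$ as the completion with respect to symmetric degree, together with $\sI^k \subset \wSym^{\geq k}$.

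A secondary point is sign bookkeeping: all maps above should be taken of degree zero, so that the extension of $s$ to $\Phi$ is well defined in the graded setting.
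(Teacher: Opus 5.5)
Your proposal is correct and is essentially the paper's proof. The paper likewise takes $\sE=\Gr^0\fg=\fg/F^1\fg$, splits $\fg\to\sE$ $\sO_\sM$-linearly using fineness, extends the splitting $\wOmega_\sM$-linearly, and uses completeness to see the resulting map is locally, hence globally, an isomorphism; your partition-of-unity averaging and Neumann-series argument just spell out the two steps the paper leaves implicit (one small point: assumption \ref{a1} is stated for $\wOmega_\sM$ rather than $\sO_\sM$, but fineness of $\sO_\sM$ follows in the same way from the splitting of graded manifolds).
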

	\begin{proof}
		Consider the short exact sequence of graded $\sO_\sM$-modules
		\[
		0 \longrightarrow F^1 \fg \longrightarrow \fg \longrightarrow \sE \coloneqq \Gr^0 \fg \longrightarrow 0.
		\]
		Since both $F^1 \fg$ and $\sE$ are (filtered) locally free over $\sO_\sM$ and fine, this short exact sequence admits a splitting $\iota\colon \sE \to \fg$. We then obtain a graded $\wOmega_\sM$-module morphism
		\begin{align*}
			\widetilde{\iota}\colon \wOmega_\sM \otimes_{\sO_\sM} \sE &\longrightarrow \fg \\
			\alpha \otimes e &\longmapsto \alpha \cdot \iota(e).
		\end{align*}
		Since the filtration on $\wOmega_\sM$ is complete, it is straightforward to verify that $\widetilde{\iota}$ is locally an isomorphism, hence an isomorphism.
	\end{proof}
	
	\begin{proof}[Proof of Proposition \ref{kerOmega}]
		Since every graded manifold splits, $\wOmega_\sM$ is a $C^\infty_M$-module and hence a fine sheaf.
		
	    By Lemma \ref{lkerOmega}, every morphism $\rho\colon \fg \to \fh$ of locally free graded $\wOmega_\sM$-modules of finite total rank can be represented in the form
		\[
		\rho\colon \wOmega_\sM \otimes_{\sO_\sM} \sE \longrightarrow \wOmega_\sM \otimes_{\sO_\sM} \sF,
		\]
		where $\sE$ and $\sF$ are locally free graded $\sO_\sM$-modules of finite total rank. Such a morphism is uniquely determined by a family of morphisms of graded $\sO_\sM$-modules
		\[
		\rho_n\colon \sE \longrightarrow \Omega^n_\sM \otimes_{\sO_\sM} \sF, \qquad n \ge 0.
		\]
		
		If $\rho$ is surjective, then the component $\rho_0\colon \sE \rightarrow \sF$ is also surjective. By Lemma \ref{kerOM}, we see that $\rho_0$ admits a splitting $s\colon \sF \rightarrow \sE$. We then obtain a surjective morphism
		\[
		\widetilde{\rho_0} \coloneqq \id \otimes \rho_0 \colon \wOmega_\sM \otimes_{\sO_\sM} \sE \longrightarrow \wOmega_\sM \otimes_{\sO_\sM} \sF,
		\]
		and a splitting of $\widetilde{\rho_0}$
		\[
		\widetilde{s} \coloneqq \id \otimes s \colon \wOmega_\sM \otimes_{\sO_\sM} \sF \longrightarrow \wOmega_\sM \otimes_{\sO_\sM} \sE.
		\]
		By construction, $\rho - \widetilde{\rho_0}$ increases the filtration. Consequently, $\id + \widetilde{s} \circ (\rho - \widetilde{\rho_0})$ is invertible, and we have
		\[
		\widetilde{\rho_0} = \rho \circ \bigl(\id + \widetilde{s} \circ (\rho - \widetilde{\rho_0})\bigr)^{-1}.
		\]
		Therefore,
		\[
		\ker \rho \cong \ker \widetilde{\rho_0} = \wOmega_\sM \otimes_{\sO_\sM} \ker \rho_0.
		\]
		By Lemma \ref{kerOM}, we see that $\ker \rho$ is locally free over $\wOmega_\sM$ of finite total rank. 
	\end{proof}
	
	\begin{prop}\label{gsf}
		The global sections functor
		\[
		\Gamma\colon \mathbf{L_\infty Sp}(\sM) \longrightarrow \mathbf{L_\infty Alg}(R_\sM)_{\mathrm{fgp}}
		\]
		is well-defined and fully faithful. Moreover, let 
		\[
		\mathbf{L_\infty Alg}(R_\sM)_{\mathrm{vec}} \subset \mathbf{L_\infty Alg}(R_\sM)_{\mathrm{fgp}}
		\]
		denote the full subcategory consisting of those $\fg_M$ such that $\fg_M \cong \Gamma(B\fg)$ for some $B\fg=(\sM, \fg) \in \mathbf{L_\infty Sp}(\sM)$. Then the functors
		\[
		\Gamma \colon \mathbf{L_\infty Sp}(\sM) \longrightarrow \mathbf{L_\infty Alg}(R_\sM)_{\mathrm{vec}}, \qquad
		\sh \colon \mathbf{L_\infty Alg}(R_\sM)_{\mathrm{vec}} \longrightarrow \mathbf{L_\infty Sp}(\sM)
		\]
		are well-defined quasi-inverse equivalences.
	\end{prop}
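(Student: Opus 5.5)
The plan is to upgrade the module-level equivalence of Proposition \ref{gft} to one that respects the multi-brackets. By Proposition \ref{kerOmega}, the graded ringed space $(M,\wOmega_\sM)$ satisfies assumptions \ref{a1} and \ref{a2}, so Lemma \ref{gff} and Proposition \ref{gft} apply with $A=R_\sM^\sharp$.

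\emph{Well-definedness.} Let $B\fg=(\sM,\fg)$ be a curved $L_\infty$ space. Then $\fg$ lies in $\mathbf{Vec}(M)$, and Lemma \ref{gff} gives that $\Gamma(\fg)=\fg(M)$ is finitely generated projective over $R_\sM^\sharp$. The brackets $l_n$ on $\fg(M)$ are simply the components at $U=M$. The $\R$-linear derivation $l_1$ and the curvature $l_0\in F^1\fg(M)[1]$ are global sections, and the $L_\infty$ identities hold because they hold on every open set. The filtration on $\fg(M)$ should be the $I$-adic one with $I=\Gamma(\sI)$. I will check that it agrees with the sectionwise filtration, again using fineness: a partition of unity writes a global section of $F^p\fg$ as a finite sum of products of global elements of $I^p$ and global sections of $\fg$. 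Completeness and strictness of this filtration then follow from finite generation and projectivity. A base-fixing morphism $\phi^\sharp$ gives components $\phi_n$ at $U=M$, and these satisfy the intertwining relations. So $\Gamma$ is a functor.

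\emph{Full faithfulness.} The key point is that, for finitely generated projective $\fg$, multilinear maps are module maps out of tensor powers. A graded symmetric multi-$\uR$-linear map $\fg_1[1]^{\times n}\to\fg_2[1]$ is a module map $\Sym^n_{\wOmega_\sM}(\fg_1[1])\to\fg_2[1]$, and $\Sym^n$ of a locally free sheaf is again in $\mathbf{Vec}(M)$. Lemma \ref{gff} together with Proposition \ref{gft} then identifies sheaf maps $\Sym^n\fg_1[1]\to\fg_2[1]$ with module maps $\Gamma(\Sym^n\fg_1[1])\to\Gamma(\fg_2[1])$. To make this work I need $\Gamma$ to commute with tensor and symmetric powers on $\mathbf{Vec}(M)$, that is, $\Gamma(\sE\otimes\sF)\cong\Gamma(\sE)\otimes_A\Gamma(\sF)$. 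This follows from $\sh(E\otimes_A F)=\sh(E)\otimes\sh(F)$, since $\psh$ of a tensor product of fgp modules is the tensor product of presheaves, combined with $\sh\circ\Gamma\cong\id$. The element $\phi_0$ is just a global section. Faithfulness is immediate. For fullness, a family $\{\phi_n\}$ defined on $\fg_1(M)$ extends uniquely to sheaf morphisms, and the intertwining identities, being equalities of sheaf morphisms whose global sections agree, hold locally by Lemma \ref{gff}.

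\emph{Main obstacle: the equivalence onto $\mathbf{L_\infty Alg}(R_\sM)_{\mathrm{vec}}$.} Given $\fg_M\cong\Gamma(B\fg)$, transporting structure shows $\sh(\fg_M)\cong\fg$. The difficulty is the differential $l_1$, which is only $\R$-linear. I would handle it by defining $l_1$ on $\psh(\fg_M)(U)=\sR_\sM(U)\otimes\fg_M$ through the Leibniz rule, $D(\alpha)\otimes x+(-1)^{|\alpha|}\alpha\, l_1(x)$, exactly as in the completed base change of Definition \ref{bc} along the restriction $R_\sM\to\sR_\sM(U)$. The same formula defines $l_n$ for $n\ge2$. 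Since $\psh(\fg_M)=\sh(\fg_M)$, the result is a sheaf of curved $L_\infty$ algebras with strict restriction maps. The unit and counit isomorphisms of Proposition \ref{gft} then respect the brackets, because both sides have the same global brackets and, by full faithfulness, the structure is determined by its global sections. This yields the quasi-inverse equivalence.
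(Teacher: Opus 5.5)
Your proposal is correct and follows the same route as the paper. Well-definedness and full faithfulness come from Proposition~\ref{kerOmega} together with Lemma~\ref{gff}; $\sh(\fg_M)$ gets its brackets as the completed base change of $\fg_M$ along the restrictions $R_\sM\to\sR_\sM(U)$; and Proposition~\ref{gft} provides the unit and counit, which are strict isomorphisms of curved $L_\infty$ algebras. You only make explicit some points the paper leaves implicit, namely $\Gamma(\sE\otimes\sF)\cong\Gamma(\sE)\otimes_A\Gamma(\sF)$ for handling the multilinear brackets and the Leibniz rule for extending $l_1$ from global sections.
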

	
	\begin{proof}	
		The well-definedness and fully faithfulness of $\Gamma$ follow from Proposition \ref{kerOmega} and Lemma \ref{gff}. To prove the second statement, let $\fg_M \cong \Gamma(B\fg)$ for some $B\fg$. Recall that $\sh(\fg_M)$ is defined by the assignment
		\[
		U \longmapsto \sR_\sM(U) \otimes_{\sR_\sM(M)} \fg_M.
		\]
		By our previous discussion, $\sh(\fg_M)$ is the completed base change of $\fg_M$ along the restriction map, hence a sheaf of curved $L_\infty$ algebras over $\sR_\sM$.
		
		By Proposition \ref{gft}, $\sh$ is well-defined and we have isomorphisms of graded $R_\sM^\sharp$-modules
		\[
		\Gamma(\sh(\fg_M)) \cong \fg_M, \qquad \sh(\Gamma(B\fg)) \cong B\fg,
		\]
		which are clearly strict isomorphisms of curved $L_\infty$ algebras.
	\end{proof}

	\section{Category of fibrant objects}
	
	A \emph{category with weak equivalences} is a category $\bfC$ together with a subcategory
	\[
	\operatorname{core}(\bfC) \subseteq \bfW \subseteq \bfC,
	\]
	where $\operatorname{core}(\bfC)$ denotes the maximal subgroupoid of $\bfC$. Morphisms in $\bfW$ are called \emph{weak equivalences} and are denoted by $\xrightarrow{\sim}$. We require that weak equivalences satisfy the 2-out-of-3 property: if $f$ and $g$ are composable morphisms in $\bfC$, and two of $f$, $g$, and $g \circ f$ are weak equivalences, then so is the third.

	The notion of a category of fibrant objects was introduced by Brown as a flexible alternative to Quillen’s model categories for doing homotopy theory \cite{brown1973abstract}.
	
	\begin{defn}
		A \emph{category of fibrant objects (CFO)} is a category with weak equivalences $\bfC$ together with an additional subcategory
		\[
		\operatorname{core}(\bfC) \subseteq \bfF \subseteq \bfC.
		\]
		Morphisms in $\bfF$ are called \emph{fibrations} and are denoted by $\twoheadrightarrow$. Morphisms in $\bfF \cap \bfW$ are called \emph{trivial fibrations} and are denoted by $\xtwoheadrightarrow{\sim}$. These data satisfy the following axioms:
		\begin{itemize}
			\item $\bfC$ has a terminal object, and every terminal morphism is a fibration.
			
			\item Pullbacks of (trivial) fibrations exist and are again (trivial) fibrations.
			
			\item For every object $X \in \bfC$, there exists a factorization 
			\[
			\begin{tikzcd}[row sep=10pt,column sep=40pt]
				& PX \arrow[dr, two heads, "{(\pr_0,\pr_1)}"] & \\
				X \arrow[ur, "\sim"] \arrow[rr] && X \times X
			\end{tikzcd}
			\]
			of the diagonal morphism, where $PX$ is called the \emph{path space} of $X$.
		\end{itemize}
	\end{defn}
	
	Let $\bfC$ be a category of fibrant objects. For an object $X \in \bfC$, define simplicial objects
	\[
	l_\bullet X,~ r_\bullet X \in \mathcal \bfC^{\Delta^{\mathrm{op}}}
	\]
	by setting:
	\begin{itemize}
		\item $l_n X = X$, with all face and degeneracy maps equal to $\mathrm{id}_X$;
		
		\item $r_n X =$ the $(n+1)$-fold product of $X$, with face maps given by projections and degeneracy maps given by diagonal inclusions.
	\end{itemize}
	We adapt the notion of simplicial frames from model categories to CFOs.
	% Defn 5.2.7
	
	\begin{defn}
		A \emph{simplicial frame} of an object $X \in \bfC$ is a simplicial object $X_\bullet \in \bfC^{\Delta^{\mathrm{op}}}$ together with a factorization 
		\begin{equation}\label{sf}
			\begin{tikzcd}[row sep=10pt,column sep=40pt]
				& X_\bullet \arrow[dr, two heads] & \\
				l_\bullet X  \arrow[ur, hook,"\sim"] \arrow[rr] && r_\bullet X
			\end{tikzcd}
		\end{equation}
		of the diagonal morphism, which is an isomorphism in degree $0$.
	\end{defn}
	
	The existence of simplicial frames implies the existence of path spaces: the factorization \eqref{sf} reduces to $X \xlongrightarrow{\sim} X_1 \twoheadrightarrow X \times X$ in degree $1$.
	
	\begin{defn}
		A \emph{(right) framing} of $\bfC$ is a functor
		\[
		(\cdot)_\bullet\colon \bfC \longrightarrow \bfC^{\Delta^{\mathrm{op}}}
		\]
		together with natural transformations
		\[
		l_\bullet X \longrightarrow X_\bullet  \quad\text{and}\quad X_\bullet \longrightarrow r_\bullet X,
		\]
		which make $X_\bullet$ a simplicial frame of every $X \in \bfC$. 
	\end{defn}

	Every model category admits a framing; hence its subcategory of fibrant objects inherits one. See, for instance, Theorem 5.2.8 in \cite{hovey2007model}.

	\subsection{Main results}
	
	Let $R = (\uR, D_R)$ be a cdga equipped with a complete $I$-adic filtration, where $I$ is a proper dg ideal of $R$. The category $\mathbf{L_\infty Alg}(R)_{\mathrm{fgp}}$ of finitely generated projective curved $L_\infty$ algebras over $R$ has a terminal object: the zero curved $L_\infty$ algebra over $R$.
	
	\begin{defn}
		A morphism $\phi\colon \fg \rightarrow \fm$ in $\mathbf{L_\infty Alg}(R)_{\mathrm{fgp}}$ is a \emph{fibration} if its unary component $\phi_1\colon \fg[1] \rightarrow \fm[1]$ is surjective.
	\end{defn}
	
	Clearly, every terminal morphism in $\mathbf{L_\infty Alg}(R)_{\mathrm{fgp}}$  is a fibration.
	
	\begin{prop}\label{pullback}
		Let $\phi\colon \fg \rightarrow \fm$ and $\psi\colon \fn \rightarrow \fm$ be morphisms of $L_\infty$ algebras over $R$. If $\phi$ is a fibration, then the fiber product of curved $L_\infty$ algebras 
		\[
		\begin{tikzcd}
			\fg \times_\fm \fn \arrow[r,"\widetilde{\psi}"] \arrow[d,"\widetilde{\phi}"'] & \fg \arrow[d,"\phi"] \\
			\fn \arrow[r,"\psi"'] & \fm
		\end{tikzcd}
		\]
		exists. Moreover, if $\phi$ is a trivial fibration, then $\widetilde{\phi}$ is also a trivial fibration.
		%and $\widetilde{\phi}$ can be taken to be a strict fibration. 
		%Moreover, if $\psi$ is strict, then $\widetilde{\psi}$ can also be taken to be strict.
	\end{prop}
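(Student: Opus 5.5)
The plan is to reduce to the case where $\phi$ is a \emph{strict} fibration onto a direct summand, build the fiber product by hand on a direct sum, and then read off the trivial-fibration statement from the associated graded.

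\emph{Step 1: straighten $\phi$.} Since $\phi_1\colon \fg[1]\to\fm[1]$ is a surjection of finitely generated projective filtered $\uR$-modules, it splits. Hence $K\coloneqq\ker\phi_1$ is again finitely generated projective, with strict filtration $F^pK=F^p\uR\cdot K$, and we may write $\fg[1]\cong K\oplus\fm[1]$. Let $p_K$ denote the projection onto $K$. Define $\theta\colon\fg\to K\oplus\fm$ by
\[
\theta_0=(0,\phi_0),\qquad \theta_1=(p_K,\phi_1),\qquad \theta_n=(0,\phi_n)\quad (n\ge2).
\]
\begin{itemize}
\item Since $\theta_1$ is an isomorphism of filtered modules and $\phi_0\in F^1$, the induced map $\CE(\theta)$ is invertible. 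Its inverse is built recursively in arity, and completeness of the filtrations makes the series in $\phi_0$ converge.
\item Transport the brackets of $\fg$ to $K\oplus\fm$ along $\CE(\theta)$, so that $\theta$ becomes an isomorphism of curved $L_\infty$ algebras.
\item Then $\phi=\pi\circ\theta$, where $\pi\colon K\oplus\fm\to\fm$ is the projection. Because $\pi\circ\theta$ is a morphism and $\theta$ is invertible, $\pi$ is a morphism; as its only nonzero component is linear, it is strict.
\end{itemize}
So it suffices to treat $\phi=\pi$, a strict projection. Strictness means $\pi\, l_n(x_1,\dots,x_n)=l_n(\pi x_1,\dots,\pi x_n)$, so the $\fm$-components of the brackets depend only on the $\fm$-components of their inputs.

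\emph{Step 2: construct the pullback.} Set $P[1]\coloneqq K\oplus\fn[1]$, which is finitely generated projective. Let $\Psi\colon P\to K\oplus\fm$ be the would-be morphism with $\Psi_1=\id_K\oplus\psi_1$ and $\Psi_n=0\oplus\psi_n$ (including $n=0$). Define the brackets on $P$ by:
\begin{itemize}
\item the $\fn$-component of $l^P_n$ is $l^\fn_n$ applied to the $\fn$-components;
\item the $K$-component of $l^P_n(z_1,\dots,z_n)$ is $p_K$ applied to the component of $D_{K\oplus\fm}\circ\CE(\Psi)$ of cogenerator type, evaluated on $z_1\odot\cdots\odot z_n$. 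This is the usual formula in which $\psi$ is inserted in all possible ways into the brackets of $K\oplus\fm$.
\end{itemize}
The curvature lies in $F^1$ because $l_0$ and $\psi_0$ both do, and all series converge by completeness.

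The $L_\infty$ relations, the fact that $\Psi$ (giving $\widetilde\psi$) and $\pr_\fn$ (giving $\widetilde\phi$) are morphisms with $\pi\circ\Psi=\psi\circ\pr_\fn$, and the universal property are cleanest at the coalgebra level. The map $\CE(P)\to\CE(K\oplus\fm)\times_{\CE(\fm)}\CE(\fn)$ is determined by cogenerators, and a morphism into $P$ is exactly a pair of morphisms into $K\oplus\fm$ and into $\fn$ agreeing in $\fm$. Strictness of $\pi$ is precisely what lets the $K$-component be chosen freely, which gives existence and uniqueness of the induced morphism. I expect this step, verifying that $D_P^2=0$ and the universal property, to be the main obstacle. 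I would try to derive both formally from $D^2=0$ on $\CE(K\oplus\fm)$ and $\CE(\fn)$ together with the identity $\pi\circ\CE(\Psi)=\CE(\psi)\circ\pr$, rather than expanding brackets.

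\emph{Step 3: trivial fibrations.} First, $\widetilde\phi=\pr_\fn$ is visibly surjective. Suppose now that $\Gr\,\phi_1$ is a quasi-isomorphism.
\begin{itemize}
\item Since $l_0$ and $\phi_0$ lie in $F^1$, on $\Gr$ the curved complexes become honest complexes, and $\Gr$ of the split short exact sequence $0\to K\to\fg[1]\to\fm[1]\to0$ stays exact, using strictness of the filtrations.
\item Therefore $(\Gr K,\Gr\, l_1)$ is acyclic.
\item On $\Gr P[1]=\Gr K\oplus\Gr\,\fn[1]$, the differential is upper triangular. The $K$-block equals $\Gr$ of the differential of $K$, because the extra terms involve $\psi_0\in F^1$ and vanish on $\Gr$.
\item Hence $\Gr\,\widetilde\phi_1$ is a surjection with acyclic kernel $\Gr K$, and is therefore a quasi-isomorphism.
\end{itemize}
Thus $\widetilde\phi$ is a trivial fibration.
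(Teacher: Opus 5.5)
Your proposal is correct and follows the paper's proof essentially step for step. Your $\theta$ is the paper's automorphism from Lemma~\ref{fibstr} (components $\id$ and $s\circ\phi_n$) rewritten in the splitting $K\oplus\fm[1]$, and your $\Psi$ is the paper's $\widetilde{\psi}$. Your $K$-component $p_K(\cdots)$ also coincides with the paper's formula for $\pi_1\circ l_n$, because $p=\id-s\circ\phi_1$ kills the subtracted $s\circ\psi$ terms. The paper likewise gets the Jacobi identities from $D^2$ being annihilated by both $\CE(\widetilde{\phi})$ and $\CE(\widetilde{\psi})$, the universal property via $\epsilon_n=(p\circ\lambda_n,\nu_n)$, and the trivial-fibration statement from the upper-triangular form of $\Gr\, l_1$ on $\fk\oplus\fn$ together with acyclicity of $\Gr\,\fk$.
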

	
	The proof is essentially the same as the one in \cite{getzler2025higherholonomycurvedlinftyalgebras}; see also \cite{rogers2020explicit}. We begin by proving the following standard lemma.
	
	\begin{lem}\label{fibstr}
		Every fibration in $\mathbf{L_\infty Alg}(R)_{\mathrm{fgp}}$ is strict up to isomorphism.
	\end{lem}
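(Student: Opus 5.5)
Given a fibration $\phi\colon \fg \to \fm$, I will produce an isomorphism $\theta\colon \fg \to \fg'$ of curved $L_\infty$ algebras such that $\phi\circ\theta^{-1}$ is strict. The idea is to split off the kernel of $\phi_1$.

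\textbf{Splitting and the candidate inverse.} Since $\phi_1\colon \fg[1]\to\fm[1]$ is a surjective $\uR$-linear map and $\fm$ is finitely generated projective, $\phi_1$ admits a $\uR$-linear splitting $\sigma\colon \fm[1]\to\fg[1]$. The kernel $K=\ker\phi_1$ is then a direct summand of $\fg[1]$, hence finitely generated projective. Because the filtrations are strict ($F^p=F^pR\cdot(\,\cdot\,)$), $\sigma$ preserves filtrations and $\fg[1]\cong K\oplus \fm[1]$ as filtered graded $\uR$-modules. Note that $\sigma$ need not commute with $l_1$; only $\uR$-linearity is used here, and $l_1$ will simply be transported. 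I then define a morphism of filtered graded coalgebras
\[
\Theta\colon \CE(\fg)\to\CE(\fg),
\]
specified by its Taylor components
\[
\theta_0=-\sigma(\phi_0),\qquad \theta_1=\id-\sigma\phi_1+\sigma\circ\phi_1=\id,\qquad \theta_n=\sigma\circ\phi_n\ (n\ge2).
\]
More cleanly, take $\theta_n = (\id-\sigma\phi_1)\delta_{n1} + \sigma\phi_n$ for all $n\ge0$, so that $\theta_1=\id$ and $\phi_1\theta_n=\phi_n$ for all $n$. This $\theta$ is a coalgebra automorphism of $\wSym_\uR(\fg[1])$. Invertibility holds because $\theta_0\in F^1$ and $\theta_1=\id$, so the inverse can be constructed recursively in the symmetric degree; completeness of the convolution filtration handles the $\theta_0$-twist.

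\textbf{Transport and strictness.} Transport the codifferential: set $D'=\Theta^{-1}D_\fg\Theta$. This is a square-zero coderivation compatible with the filtration, hence by the full embedding $\CE$ it defines a curved $L_\infty$ structure $\fg'$ on the same module, with $\theta\colon \fg'\to\fg$ an isomorphism. The Leibniz rule for $l_1'$ with respect to $D_R$ survives because $\Theta$ is $\uR$-linear. The main check is that $\phi\circ\theta$ is strict, i.e. that $(\phi\circ\theta)_n=0$ for $n\ne1$ and $(\phi\circ\theta)_1=\phi_1$. By the composition formula, $(\phi\circ\theta)_n$ is $\phi_1\theta_n$ plus terms $\phi_k(\theta_{n_1},\dots,\theta_{n_k})$ with $k\ne1$, so choosing $\theta_n=\sigma\phi_n$ alone does not kill it. Instead, I will define $\theta$ recursively so that the composition is strict. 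At arity $n$, require
\[
\phi_1\theta_n=-\sum_{k\neq1}\phi_k(\theta_{n_1},\dots,\theta_{n_k})+\delta_{n1}\phi_1,
\]
and solve it by applying $\sigma$ to the right-hand side. For $n=0$ this is an infinite sum $\sum_k\frac1{k!}\phi_k(\theta_0,\dots,\theta_0)$, which is solved by fixed-point iteration. That iteration converges because $\theta_0\in F^1$ and the filtration is complete. This $n=0$ fixed-point step is the only delicate point; it is the main obstacle, and I would handle it by successive approximation modulo $F^p$.

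\textbf{Conclusion.} With $\theta_1=\id$, the data $\theta$ is an isomorphism of coalgebras, and $\phi':=\phi\circ\Theta$ is strict with $\phi'_1=\phi_1$ surjective. Hence $\phi$ is strict up to the isomorphism $\theta$.
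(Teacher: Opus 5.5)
\textbf{There is a genuine gap, and it comes from composing on the wrong side.} Your first automorphism, with $\theta_1=\id$ and $\theta_n=\sigma\phi_n$ for $n\neq 1$, is exactly the paper's $\Psi$, and it already works. (Note $\theta_0=\sigma(\phi_0)$, not $-\sigma(\phi_0)$ as in your first line.)

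Let $\Phi$ be the strict coalgebra map with $\Phi_1=\phi_1$. In $\Phi\circ\Theta$ the \emph{outer} map is strict, so the composition formula keeps only the $k=1$ terms. Hence $(\Phi\circ\Theta)_n=\phi_1\theta_n=\phi_1\sigma\phi_n=\phi_n$ for $n\neq 1$, and $(\Phi\circ\Theta)_1=\phi_1$; that is, $\CE(\phi)=\Phi\circ\Theta$.

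Now transport the differential as $D'=\Theta D_\fg\Theta^{-1}$, not $\Theta^{-1}D_\fg\Theta$. Then $\Theta\colon(\fg,D_\fg)\to(\fg,D')$ is an isomorphism, and $\Phi=\CE(\phi)\circ\Theta^{-1}$ is a strict dg coalgebra map. This is precisely the composite $\phi\circ\theta^{-1}$ you announced in your first sentence. Instead you transported the other way and tested $\phi\circ\theta$, where the non-strict $\phi$ sits outside. That is why the extra terms $\phi_k(\theta_{n_1},\dots,\theta_{n_k})$ appeared and made you abandon a correct construction.

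\textbf{The recursive replacement does not work as written.} Your $n=0$ equation forces $\theta_0\neq 0$ whenever $\phi_0\neq 0$. Once $\theta_0\neq0$, the $k\ge2$ terms with one $n_i=n$ and all other $n_j=0$ contribute $N\theta_n$, where
$N(y)=\sum_{k\ge2}\frac{1}{(k-1)!}\phi_k(\theta_0,\dots,\theta_0,y)$.
So for every $n\ge 1$, not only $n=0$, the right-hand side of your equation contains $\theta_n$ itself, and ``applying $\sigma$'' does not define $\theta_n$.

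At $n=1$ the equation reads $(\phi_1+N)\theta_1=\phi_1$. It is violated by $\theta_1=\id$ unless $N=0$. On the other hand, any $\theta_1$ of the form $\sigma(\cdots)$ has image in $\sigma(\fm[1])$, so it is not surjective when $\ker\phi_1\neq 0$, and $\Theta$ would not be invertible. Your conclusion that $\theta_1=\id$ and $\phi'_1=\phi_1$ is therefore inconsistent.

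The scheme can be repaired. Take $\theta_1=(\id+\sigma N)^{-1}$, which satisfies $(\phi_1+N)\theta_1=\phi_1(\id+\sigma N)\theta_1=\phi_1$. For $n\ge 2$, take $\theta_n=-\theta_1\sigma B_n$, where $B_n$ collects the terms involving only $\theta_0,\dots,\theta_{n-1}$. Convergence follows from completeness, as in your $n=0$ step. However, the simpler fix is to keep your original $\Theta$ and compose in the right order, which gives the paper's one-line argument.
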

	
	\begin{proof}
		Let $\phi \colon \fg \to \fm$ be a fibration. By definition, the linear term
		\[
		\phi_1 \colon \fg[1] \to \fm[1]
		\]
		is surjective. Since $\fm$ is projective, we can find a splitting
		\[
		s \colon \fm[1] \to \fg[1]
		\]
		of $\phi_1$, and let $\fk = \ker(\phi_1)[-1]$. Then
		\[
		\fg[1] \cong \fk[1] \oplus s(\fm[1]).
		\]
		
		Consider the filtered graded cocommutative coalgebra morphism
		\[
		\Psi \colon \wSym_\uR(\fg[1]) \to \wSym_\uR(\fg[1])
		\]
		over $\uR$, whose Taylor coefficients are given by
		\[
		\Psi_n = \begin{cases}
			\id, & n=1, \\
			s \circ \phi_n, & n \neq 1.
		\end{cases}
		\]
		Since the completion on $\wSym_\uR(\fg[1])$ is complete and $s \circ \phi_n$ increases the filtration, it is easy to check that $\Psi$ defines an automorphism. On the other hand, consider the filtered graded cocommutative coalgebra morphism
		\[
		\Phi\colon \wSym_\uR(\fg[1]) \to \wSym_\uR(\fm[1])
		\]
		over $\uR$, whose Taylor coefficients are given by
		\[
		\Phi_n = \begin{cases}
			\phi_1, & n=1,\\
			0, & n \neq 1.
		\end{cases}
		\]
		We claim that $\CE(\phi) =  \Phi \circ \Psi$. Indeed, recall that the Taylor coefficients $\{(\phi \circ \psi)_n\}_{n=0}^\infty$ of $\Phi \circ \Psi$ are given by the formula
		\[
		(\phi \circ \psi)_n(x_1, \dots, x_n)
		=
		\sum_{k=0}^{\infty}
		\sum_{n_1 + \cdots + n_k = n}
		\sum_{\sigma \in \Sigma_n}
		\frac{\epsilon(\sigma)}{k!n_1! \cdots n_k!}
		\, \phi_k\!\Big(
		\psi_{n_1}(x_{\sigma(1)}, \dots), \dots, \psi_{n_k}(\dots, x_{\sigma(n)})
		\Big).
		\]
		In our case, this simplifies to
		\[
		(\phi \circ \psi)_n
		= \phi_1 \circ \psi_n
		= \begin{cases}
			\phi_1, & n=1\\
			\phi_1 \circ s \circ \phi_n, & n \neq 1
		\end{cases}
		= \phi_n.
		\]
		Using $\Psi$ to conjugate the Chevalley--Eilenberg differential on $\fg$, we obtain a strict morphism isomorphic to $\phi$.
	\end{proof}
	
	We now return to the proof of Proposition \ref{pullback}. By Lemma \ref{fibstr}, we may assume that $\phi$ is strict. Let 
	\[
	s\colon \fm[1] \rightarrow \fg[1] \quad \text{and} \quad \fk = \ker \phi_1
	\]
	be as in the proof of Lemma \ref{fibstr}. Let $p\colon \fg[1] \rightarrow \fg[1]$ denote the idempotent operator 
	\[
	\id - s \circ \phi_1
	\]
	on $\fg[1]$. The fiber product $\fg \times_\fm \fn$ is realized on the filtered graded $\uR$-module $\fk \oplus \fn$. We take the morphism
	\[
	\widetilde{\phi} \colon \fk \oplus \fn \longrightarrow \fn
	\]
	to be the strict fibration given by projection onto the second factor. The morphism
	\[
	\widetilde{\psi} \colon \fk \oplus \fn \longrightarrow \fg
	\]
	is then required to satisfy
	\[
	\begin{aligned}
		\phi_1\left( \widetilde{\psi}_n(\zeta_1, \dots, \zeta_n) \right)
		=
		\psi_n\left( y_1, \dots, y_n \right)
	\end{aligned}
	\]
	for $\zeta_i = (x_i, y_i) \in \fk \oplus \fn$. These equations admit a unique solution satisfying the conditions
	\begin{align*}
		p \circ \widetilde{\psi}_{n}  = 
		\begin{cases}
			p, & n=1,\\
			0, & n\neq 1.
		\end{cases}
	\end{align*}
	given by
	\begin{equation}\label{solpsi}
		\begin{aligned}
			\widetilde{\psi}_n(\zeta_1, \dots, \zeta_n) =
			\begin{cases}
				x_1 + s \circ \psi_1(y_1), & n=1,\\
				s \circ \psi_n(y_1, \dots, y_n) & n\neq 1.
			\end{cases}
		\end{aligned}
	\end{equation}
	
	The multi-brackets $\{l_n = (\pi_1 \circ l_n, \pi_2 \circ l_n)\}_{n=0}^\infty$ on the fiber product $\mathfrak{g} \times_{\mathfrak{m}} \mathfrak{n}$ are characterized by their compatibility with $\widetilde{\phi}$ and $\widetilde{\psi}$, where $\pi_1\colon \fk \oplus \fn \rightarrow \fk \subset \fg$ and $\pi_2\colon \fk \oplus \fn \rightarrow \fn$ are the canonical projections. Explicitly, compatibility with $\widetilde{\phi}$ yields the characterization
	\begin{equation}\label{compphi}
		\pi_2 \circ l_n(\zeta_1,\ldots,\zeta_n)
		=
		l_n(y_1, \dots, y_n).
	\end{equation}
	Compatibility with $\widetilde{\psi}$ means that
	\begin{equation}\label{pullback-morphism}
		\begin{aligned}
			&
			\widetilde{\psi}_1
			\bigl(
			l_n(\zeta_1,\ldots,\zeta_n)    
			\bigr)
			+
			\sum_{\sigma\in \Sigma_n}\sum_{k=0}^{n-1}
			\frac{\epsilon(\sigma)}{k!(n-k)!}
			\widetilde{\psi}_{n-k+1}
			\bigl(
			l_k(\zeta_{\sigma(1)},\ldots,\zeta_{\sigma(k)}),
			\zeta_{\sigma(k+1)},\ldots,\zeta_{\sigma(n)}
			\bigr)
			\\
			&=
			\sum_{k=0}^{\infty}
			\frac{1}{k!}
			\sum_{\sigma\in \Sigma_n}\sum_{n_1+\cdots+n_k=n}
			\frac{\epsilon(\sigma)}{n_1!\cdots n_k!}
			l_k
			\Bigl(
			\widetilde{\psi}_{n_1}(\zeta_{\sigma(1)},\ldots),
			\ldots,
			\widetilde{\psi}_{n_k}(\ldots,\zeta_{\sigma(n)})
			\Bigr).
		\end{aligned}
	\end{equation}
	Applying \eqref{solpsi} and \eqref{compphi} to \eqref{pullback-morphism} yields the characterization
	\[
	\begin{aligned}
		\pi_1 \circ l_n(\zeta_1,\ldots,\zeta_n) &=
		\sum_{k=0}^{\infty}
		\frac{1}{k!}
		\sum_{\sigma\in \Sigma_n}\sum_{n_1+\cdots+n_k=n}
		\frac{\epsilon(\sigma)}{n_1!\cdots n_k!}
		l_k
		\Bigl(
		\widetilde{\psi}_{n_1}(\zeta_{\sigma(1)},\ldots),
		\ldots,
		\widetilde{\psi}_{n_k}(\ldots,\zeta_{\sigma(n)})
		\Bigr) \\
		&-\sum_{\sigma\in \Sigma_n}\sum_{k=0}^n
		\frac{\epsilon(\sigma)}{k!(n-k)!}
		s \circ \psi_{n-k+1}
		\bigl(
		l_k(y_{\sigma(1)},\ldots,y_{\sigma(k)}),
		y_{\sigma(k+1)},\ldots, y_{\sigma(n)}
		\bigr).
	\end{aligned}
	\] 
	
	We need to verify that the multi-brackets on $\fg \times_\fm \fn$ are well defined.
	%For $n=0$ and $n=1$, we have $l_0 = (p(l_0), l_0) \in \fk \oplus \fn$ and
	The unary bracket $l_1$ is given by
	\begin{equation}\label{l1}
		\begin{aligned}
			&\pi_1 \circ l_1(\zeta_1) =\sum_{k=0}^\infty\frac{1}{k!}l_{k+1}(s(\psi_0), \dots, s(\psi_0), x_1 + s (\psi_1(y_1)))- s\left( \psi_1(l_1(y_1)) + \psi_2(l_0,y_1)\right), \\
			&\pi_2 \circ l_1(\zeta_1) =l_1(y_1). 
		\end{aligned}
	\end{equation}
	%\begin{align*}
	%	l_1(x_1 + s (\psi_1(y_1)))- s\left( \psi_1(l_1(y_1))\right),
	%\end{align*}
	It is easy to see that $l_1$ satisfies the Leibniz rule over $R$, i.e.
	\[
	l_1(r \zeta_1) - (-1)^{|r|} r l_1(\zeta_1) = D_R(r) \zeta_1.
	\]
	
	Let $D$ denote the degree $1$ coderivation on $\wSym_\uR((\fg \times_\fm \fn)[1])$ induced by the multi-brackets on $\fg \times_\fm \fn$. By construction,
	\[
	D^2\bigl(\wSym_\uR((\fg \times_\fm \fn)[1])\bigr)
	\]
	lies in the kernels of both $\CE(\widetilde{\phi})$ and $\CE(\widetilde{\psi})$, and hence must vanish. Therefore, the multi-brackets on $\fg \times_\fm \fn$ satisfy the strong homotopy Jacobi identities.

	To see that $\mathfrak{g}\times_{\mathfrak{m}} \mathfrak{n}$ is a pullback, consider a commutative diagram of the form
	\[
	\begin{tikzcd}[column sep=30pt, row sep=30pt]
		\mathfrak{a}
		\arrow[bend right=25,ddr,"\nu"']
		\arrow[bend left=25,drr,"\lambda"]
		\arrow[dashed,dr,"\epsilon"']
		&&
		\\
		&
		\mathfrak{g}\times_{\mathfrak{m}} \mathfrak{n}
		\arrow[d,"\widetilde{\phi}"']
		\arrow[r,"\widetilde{\psi}"]
		&
		\mathfrak{g} \arrow[d,"\phi"]
		\\
		&
		\mathfrak{n} \arrow[r,"\psi"']
		&
		\mathfrak{m}
	\end{tikzcd}
	\]
	The morphism $\epsilon$ is given by
	\[
	\epsilon_n(z_1,\ldots,z_n)
	=
	(p \circ \lambda_n(z_1,\ldots,z_n), \nu_n(z_1,\ldots,z_n)).
	\]
	This proves the universal property.
	
	We now turn to the proof of the second statement. Using \eqref{l1}, we see that
	\[
	\Gr\, l_1 =
	\begin{pmatrix}
		\Gr\, l_1 & \Gr\,[l_1, s \circ \psi_1] \\
		0 & \Gr\, l_1
	\end{pmatrix}
	\]
	on $\fk \oplus \fn$. We can then identify
	\[
	H^\bullet(\Gr\, \widetilde{\phi}_1)\colon
	H^\bullet(\Gr\, \fg \times_\fm \fn)
	\cong
	H^\bullet(\Gr\, \fk)\oplus H^\bullet(\Gr\, \fn)
	\longrightarrow
	H^\bullet(\Gr\, \fn)
	\]
	with the projection onto the second factor. Therefore, if $\phi$ is a weak equivalence, then
	\[
	H^\bullet(\Gr\, \fk)=0,
	\]
	and hence $H^\bullet(\Gr\, \widetilde{\phi}_1)$ is an isomorphism. This completes the proof of Proposition~\ref{pullback}.

	For each $n \ge 0$, consider the cdga $\Omega_n = (\Omega_n^\sharp, d)$, where
	\[
	\Omega_n^\sharp =
	\mathbb{R}[t_0,\ldots,t_n,dt_0,\ldots,dt_n]/(t_0+\cdots+t_n-1,\; dt_0+\cdots+dt_n),
	\]
	and the differential $d$ is determined by
	\[
	d(t_i)=dt_i, \qquad d(dt_i)=0.
	\]
	This cdga can be identified with the algebra of polynomial forms on the geometric $n$-simplex
	\[
	|\Delta^n|
	=
	\left\{
	(t_0,\dots,t_n)\in \mathbb{R}^{n+1}
	\;\middle|\;
	t_0 \ge 0,\; \dots,\; t_n \ge 0,\;
	\sum_{i=0}^n t_i = 1
	\right\},
	\]
	equipped with the de Rham differential.
	
	Let $\fg \in \mathbf{L_\infty Alg}(R)_{\mathrm{fgp}}$. The completed tensor product 
	\[
    \Omega_n^\sharp \,\wotimes_\R\, \fg
	\]
	inherits a natural structure of a curved $L_\infty$ algebra over $R$, with the induced filtration
	\[
	F^p(\Omega_n^\sharp\,\wotimes_\R\, \fg) = \Omega_n^\sharp \,\wotimes_\R\, F^p \fg,
	\]
	and the induced multi-brackets
	\[
	l_n(\alpha_1 \otimes x_1,\ldots,\alpha_n \otimes x_n)
	=
	\begin{cases}
		1 \otimes l_0, & n=0, \\
		d\alpha_1 \otimes x_1 + (-1)^{|\alpha_1|}\alpha_1 \otimes l_1(x_1), & n=1, \\
		(-1)^{\sum_{i<j} |x_i||\alpha_j|}\, \alpha_1 \cdots \alpha_n \otimes l_n(x_1,\dots,x_n), & n>1.
	\end{cases}\footnote{This construction should not be confused with a completed base change of $\fg$.}
	\]
	
	We would like $\Omega_\bullet^\sharp \,\wotimes_\R\, \fg$ to define a simplicial frame for $\fg$. However, the component $\Omega_n^\sharp \,\wotimes_\R\, \fg$ is not finitely generated for $n \geq 1$. To remedy this, one applies Dupont's contraction to retract $\Omega_n$ onto the finite-dimensional Whitney complex $W_n \subset \Omega_n$, which is spanned by the elementary forms
	\[
	\omega_{i_0\cdots i_k}
	=
	k!
	\sum_{j=0}^k (-1)^j\, t_{i_j}\,
	dt_{i_0}\wedge \cdots \widehat{dt_{i_j}} \cdots \wedge dt_{i_k},
	\qquad
	0 \le i_0 < \cdots < i_k \le n.
	\]
	
	Dupont's contraction takes the form \cite{dupont1976simplicial} 
	\[
	s_n
	=
	\sum_{k=0}^{n-1}
	\sum_{0 \le i_0 < \cdots < i_k \le n}
	\omega_{i_0\cdots i_k}\,
	h_n^{i_k}\cdots h_n^{i_0},
	\]
	where the operators $h_n^i$ are defined by
	\[
	h_n^i(\omega)
	=
	\int_0^1 \frac{du}{u}\,
	(\varphi_i(u))^* \iota_{E_i}\omega.
	\]
	Here $\iota_{E_i}$ denotes contraction with the $i$-th Euler vector field on $|\Delta^n|$
	\[
	E_i
	\coloneqq
	\sum_{j=0}^n (t_j - \delta_{ij})\frac{\partial}{\partial t_j},
	\]
	and
	\[
	\varphi_i \colon [0,1]\times |\Delta^n|
	\longrightarrow |\Delta^n|,
	\qquad
	(u,\mathbf{t})
	\longmapsto
	u\mathbf{t} + (1-u)e_i,
	\]
	is the dilation flow generated by $E_i$, where $e_i$ denotes the $i$-th vertex of the simplex. 
	
	Let $i_n\colon W_n \rightarrow \Omega_n$ denote the canonical inclusion map. Dupont proved that
	\[
	\id - i_n p_n = [d, s_n],
	\]
	where $p_n \colon \Omega_n \to W_n$ denotes Whitney’s projection. In terms of the contractions $h_n^i$, it can be expressed as 
	\[
	p_n
	=
	\sum_{k=0}^n (-1)^k
	\sum_{0 \le i_0 < \cdots < i_k \le n}
	\omega_{i_0\cdots i_k}\,
	\varepsilon_{i_k}^n\,
	h^{i_{k-1}}_n \cdots h^{i_0}_n,
	\]
	where $\varepsilon^n_i\colon \Omega_n \rightarrow \R$ is the evaluation map at the vertex $e_i$. 
	
	\begin{lem}
		The triple $(p_n, i_n, s_n)$ defines a strong deformation retract of (flat) complexes from $\Omega_\bullet$ to $W_\bullet$. That is, we have
		\[
		\id - i_np_n = [d,s_n],
		\qquad
		\id - p_ni_n = 0,
		\qquad
		s_n^2 = p_ns_n = s_ni_n = 0.
		\]
	\end{lem}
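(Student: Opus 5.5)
The identity $\id - i_np_n = [d,s_n]$ is Dupont's theorem, quoted above, so I will take it as given. The plan is to derive the remaining side conditions $p_ni_n=\id$, $s_ni_n=0$, $p_ns_n=0$ and $s_n^2=0$ from a short list of algebraic relations among the operators $h^i_n$, $\varepsilon^n_i$ and multiplication by Whitney forms. This follows the strategy of Getzler's treatment of Dupont's contraction. The relations I would establish first are:
\begin{itemize}
\item[(a)] $h^i_nh^i_n=0$ and $h^i_nh^j_n+h^j_nh^i_n=0$;
\item[(b)] $dh^i_n+h^i_nd=\id-\varepsilon^n_i$ (with $\varepsilon^n_i$ followed by inclusion of constants) and $\varepsilon^n_ih^i_n=0$;
\item[(c)] a formula for $h^i_n$ applied to $t_j$, $dt_j$ and products.
\end{itemize}
For (a), I will use that $\iota_{E_i}^2=0$, that $\varphi_i(u)^*\iota_{E_i}=u\,\iota_{E_i}\varphi_i(u)^*$ because $E_i$ is $\varphi_i$-equivariant, and that $[E_i,E_j]=E_j-E_i$. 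Relation (b) is the usual homotopy formula for the contracting flow $\varphi_i$. For (c), since $E_i(t_j)=t_j-\delta_{ij}$ and the forms are polynomial, every $h^i_n$ acts on monomials by explicit rational coefficients. This lets me compute $h^i_n$ on Whitney forms exactly.

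For $p_ni_n=\id$, I will first show that $p_n$ equals Whitney's projection in its integral form, $p_n(\omega)=\sum_J \omega_J\int_{\Delta^J}\omega$. The point is that $\varepsilon^n_{i_k}h^{i_{k-1}}_n\cdots h^{i_0}_n$, applied to a form, computes the integral over the face $\Delta^{\{i_0,\dots,i_k\}}$ by iterated coning from successive vertices, and the sign $(-1)^k$ matches the orientation. I expect this to follow by induction on $k$ from (b) and Stokes' theorem. Then $\int_{\Delta^J}\omega_K=\delta_{JK}$, with the normalisation $k!$ built into $\omega_K$, gives $p_ni_n=\id$.

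For $s_ni_n=0$, I will compute $h^{i_k}_n\cdots h^{i_0}_n\omega_K$ using (c). A single $h^i_n$ sends $\omega_K$ to a multiple of $\omega_{K\setminus\{i\}}$ when $i\in K$, and to $0$ when $i\notin K$, because $\iota_{E_i}$ kills the relevant combination. Iterating, the only surviving composites are those ending in degree $0$, and these are killed by the outermost $h$, since $h$ vanishes on functions. For $p_ns_n=0$, I will expand $p_ns_n$ as a sum of terms $\omega_{J}\varepsilon h\cdots h\,\omega_{K}h\cdots h$. I will move $h$'s past the Whitney factor $\omega_K$ using (c), which produces boundary terms localised at vertices in $K$, and then apply (a) and $\varepsilon^n_ih^i_n=0$ from (b) to kill every term. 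Alternatively, $p_ns_n=0$ follows from the integral description of $p_n$, because each term of $s_n\omega$ restricts to zero on every face of matching dimension: the $h$'s already integrate out those directions.

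The main obstacle I expect is $s_n^2=0$. It requires commuting the string $h^{j_l}_n\cdots h^{j_0}_n$ past the multiplication operator $\omega_{i_0\cdots i_k}$ and then using (a) to cancel the products $h\cdots h\,h\cdots h$ in which an index repeats or whose indices are antisymmetrised. I will first try to prove a twisted Leibniz rule: for a Whitney form $\omega_K$, $h^j_n(\omega_K\beta)=\pm\,\omega_K\,h^j_n\beta$ whenever $j\in K$, with an explicit correction term otherwise. With that rule, the double sum defining $s_n^2$ should reorganise into terms each containing either $h^i_nh^i_n$ or an antisymmetric pair, and hence vanish. If this bookkeeping becomes unmanageable, my fallback is to verify $s_n^2=0$ on the spanning set of polynomial forms $t^a\,dt_J$, reducing to $n\le 2$ by functoriality in face maps (the operators are compatible with the simplicial structure) and checking those low-dimensional cases directly.
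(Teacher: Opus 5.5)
\textbf{Verdict: genuine gap.} The paper does not prove this lemma itself. It takes $\id-i_np_n=[d,s_n]$ from Dupont and cites Getzler (Lemma 3.4 and Theorem 3.11) for the side conditions, so your plan amounts to reconstructing Getzler's argument. Two of the side conditions, $s_ni_n=0$ and $s_n^2=0$, are not established by the mechanisms you propose.

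\textbf{The parts that work.} Your route to $p_ni_n=\id$ via $p_n\omega=\sum_J\omega_J\int_{\Delta^J}\omega$ is fine. So is the integral route to $p_ns_n=0$: on a face $\Delta^L$ with $\dim L=\deg\omega-1$, the term $\omega_J h^{j_l}_n\cdots h^{j_0}_n\omega$ restricts to zero, because $\omega_J|_{\Delta^L}=0$ unless $J\subseteq L$, $h^j_n$ commutes with restriction to faces containing $e_j$, and $\omega|_{\Delta^L}=0$ for degree reasons. One caveat: the factor $(-1)^k$ in $p_n$ does not ``match the orientation''. With the definitions as written, $h^{i_0}_n\omega_{i_0\cdots i_k}=\omega_{i_1\cdots i_k}$, so $\varepsilon^n_{i_k}h^{i_{k-1}}_n\cdots h^{i_0}_n\,\omega_{i_0\cdots i_k}=1$ for every $k$. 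That sign has to be dropped to get $p_ni_n=\id$.

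\textbf{The argument for $s_ni_n=0$ is wrong.} Write $h^J_n=h^{j_l}_n\cdots h^{j_0}_n$ for $J=\{j_0<\cdots<j_l\}$. Iterating your (correct) one-step formula gives $h^J_n\omega_K=\pm\,\omega_{K\setminus J}$ for $J\subseteq K$. For $J\subsetneq K$ these composites are nonzero: in $s_n$ they are followed by multiplication by $\omega_J$, not by a further $h$. Hence
\[
s_n\omega_K=\sum_{\emptyset\neq J\subsetneq K}\pm\,\omega_J\wedge\omega_{K\setminus J},
\]
with individually nonzero terms; for instance $s_2\omega_{012}=2\,(t_0\omega_{12}-t_1\omega_{02}+t_2\omega_{01})$. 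This vanishes only by a cancellation identity among Whitney forms, which your plan does not contain:
\begin{itemize}
\item the sign on each term is that of the shuffle $dt_K=\pm\,dt_J\wedge dt_{K\setminus J}$;
\item $\omega_J=(|J|-1)!\,\iota_E\,dt_J$ with $E=\sum_jt_j\partial_{t_j}$ on $\R^{n+1}$;
\item for fixed $|J|$ the sum is obtained by applying $\iota_E\otimes\iota_E$ to a bihomogeneous piece of the shuffle coproduct of $dt_K$ and multiplying. This is zero because $\iota_E$ passes through the coproduct on either factor and $\iota_E^2=0$.
\end{itemize}

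\textbf{The larger gap is $s_n^2=0$.} This is the nontrivial part of what the paper imports from Getzler, and neither of your mechanisms works.
\begin{itemize}
\item The twisted Leibniz rule is false, even on the relevant subspace. We have $h^1_2h^0_2(dt_1\wedge dt_2)=t_2/2$, yet $h^0_2(\omega_{01}t_2)=\tfrac{1}{2}t_1t_2\neq0=\omega_{01}h^0_2(t_2)$.
\item In general $\iota_{E_j}\omega_K\neq0$ for $j\in K$. For $j\notin K$ one has $\varphi_j(u)^*\omega_K=u^{|K|}\omega_K$, so $h^j_n(\omega_K\beta)=\pm\,\omega_K\int_0^1u^{|K|-1}\varphi_j(u)^*\iota_{E_j}\beta\,du$. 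That is a reweighted operator, not $\omega_Kh^j_n\beta$ plus a correction.
\item Consequently the double sum does not reorganize into terms containing $h^ih^i$ or antisymmetric pairs.
\item The fallback also fails. Compatibility with face maps only shows that $s_n^2\alpha$ restricts to zero on $\partial\Delta^n$ once $s_{n-1}^2=0$ is known. Polynomial forms are not detected by their boundary restrictions (consider $t_0t_1\cdots t_n$), so there is no reduction to $n\le2$.
\end{itemize}
What you need is a correct computation of $h^J_n(\omega_K\wedge h^K_n\alpha)$, followed by a genuine cancellation argument; otherwise cite Getzler as the paper does. A useful input is that $\iota_{E_i}$ kills the image of $h^K_n$ for $i\in K$, which follows from (a) together with $\iota_{E_i}h^i_n=0$.

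\textbf{Minor slips in (a).} $E_i$ is $\varphi_i(u)$-related to itself, so $\varphi_i(u)^*\iota_{E_i}=\iota_{E_i}\varphi_i(u)^*$ with no factor $u$. Also $[E_i,E_j]=E_i-E_j$, not $E_j-E_i$. Anticommutation is easiest to see by writing $h^j_nh^i_n$ as fibre integration over a $2$-simplex whose orientation flips when $i$ and $j$ are exchanged.
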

	The side conditions are proved in Lemma 3.4 and Theorem 3.11 of \cite{getzler2009}.
	
	\begin{lem}
		$s_n \otimes \id$ defines a contraction of the curved $L_\infty$ algebra $\Omega_n^\sharp \,\wotimes_\R\, \fg$ over $R$.
	\end{lem}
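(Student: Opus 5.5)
The plan is to check, one by one, the data required of a contraction in the sense of Appendix \ref{sec:hpt}. By the preceding lemma, $(p_n, i_n, s_n)$ is a strong deformation retract of flat complexes $\Omega_n \to W_n$. I will tensor all three maps with $\id_\fg$ and verify that the result is a strong deformation retract of the curved complex $(\Omega_n^\sharp \wotimes_\R \fg[1], l_1)$ onto $(W_n \otimes_\R \fg[1], l_1)$. That means checking four things: that $s_n\otimes\id$, $p_n\otimes\id$, $i_n\otimes\id$ are $\uR$-linear and filtration preserving; that $p_n\otimes\id$ and $i_n\otimes\id$ are morphisms of curved complexes; that the homotopy identity holds; and that they are compatible with the curvature $r=-l_2(l_0,\cdot)$.

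\emph{Step 1 (well-definedness).} The ring $\uR$ acts only on the $\fg$ factor, and $s_n$, $p_n$, $i_n$ act only on the $\Omega_n$ factor. Hence the tensored maps are $\uR$-linear, once Koszul signs are inserted for the odd map $s_n$. On the algebraic tensor product they visibly preserve $F^p = \Omega_n^\sharp\otimes F^p\fg$. They therefore extend uniquely to the completion, since the filtration is complete and separated. One point needs a remark: $h_n^i$ is given by an integral, so one should note that it maps polynomial forms to polynomial forms. Then $s_n$ is an honest endomorphism of $\Omega_n^\sharp$, and $s_n\otimes\id$ is defined degreewise on $\Omega_n^\sharp\wotimes_\R\fg$.

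\emph{Step 2 (homotopy identity).} On $\Omega_n^\sharp\wotimes_\R\fg$ we have $l_1 = d\otimes\id + \tau$, where $\tau(\alpha\otimes x)=(-1)^{|\alpha|}\alpha\otimes l_1(x)$. Since $s_n$ has degree $-1$, a direct Koszul sign check gives
\[
(s_n\otimes\id)\,\tau(\alpha\otimes x) + \tau\,(s_n\otimes\id)(\alpha\otimes x)
= (-1)^{|\alpha|}s_n\alpha\otimes l_1x + (-1)^{|\alpha|-1}s_n\alpha\otimes l_1 x = 0 .
\]
Therefore $[l_1, s_n\otimes\id] = [d,s_n]\otimes\id = \id - (i_np_n)\otimes\id$. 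The same computation, with even maps, shows that $p_n\otimes\id$ and $i_n\otimes\id$ commute with $l_1$. This uses that $W_n$ is a subcomplex and $p_n$ is a chain map. Consequently $W_n\otimes\fg$ is preserved by all brackets except possibly through products, which the transfer handles later. The side conditions $s_n^2=p_ns_n=s_ni_n=0$ and $p_ni_n=\id$ tensor directly.

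\emph{Step 3 (curvature).} The curvature of $\Omega_n^\sharp\wotimes_\R\fg$ is $-l_2(1\otimes l_0,\cdot)$. By the formula for $n=2$, this equals $\id\otimes r$ up to the sign $(-1)^{|\alpha|}$. It therefore (graded-)commutes with $s_n\otimes\id$, $p_n\otimes\id$ and $i_n\otimes\id$, by the same bookkeeping as in Step 2. If the appendix's notion of contraction asks for $[l_1,h]=\id-ip$ only modulo curvature terms, or asks for $h$ to commute with $r$, both are then satisfied.

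The main obstacle is not conceptual. It is getting the conventions right: matching the Koszul signs in the definition of the induced brackets with the precise axioms of a contraction of curved complexes in Appendix \ref{sec:hpt}, and justifying that $s_n$ is defined on polynomial forms and extends continuously to the completion. I would settle the sign convention first on elementary tensors, then extend everything by continuity.
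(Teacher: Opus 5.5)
Your route is the paper's: tensor Dupont's data with $\id_\fg$ and track Koszul signs. As written, however, the proposal does not prove the lemma.

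The main omission is the condition that distinguishes a contraction of a curved $L_\infty$ algebra from a contraction of its underlying curved complex. The definition in Appendix~\ref{sec:htt} requires $h(l_0)=0$, which here reads $(s_n\otimes\id)(1\otimes l_0)=0$, and you never check it. It is not decorative: it is what gives $[\bfd,L_h]=L_{[l_1,h]}$ in the transfer, and what keeps the curvature unchanged. It is immediate once noticed, because $s_n(1)=0$. Indeed, $s_n$ has degree $-1$ and $\Omega_n$ lives in non-negative degrees (equivalently $h_n^i(1)=0$, or $1=\sum_i t_i\in W_n$ together with $s_ni_n=0$).

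Note also that the axiom to verify is $hdh=h$, not the retract identity. It follows from your Step~2: compose $[l_1,s_n\otimes\id]=\id-(i_np_n)\otimes\id$ on the left with $s_n\otimes\id$ and use $s_n^2=s_ni_n=0$. This is the paper's one-line computation $s_nds_n\otimes\id+s_n^2\otimes l_1=s_n\otimes\id$. Your verifications about $p_n\otimes\id$ and $i_n\otimes\id$ are not needed for this lemma.

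Step~3 is also argued incorrectly. Suppose the curvature really were $c(\alpha\otimes x)=(-1)^{|\alpha|}\alpha\otimes r(x)$. Then it would \emph{not} commute with $s_n\otimes\id$: since $s_n$ lowers $|\alpha|$ by one, $[s_n\otimes\id,c](\alpha\otimes x)=2(-1)^{|\alpha|}s_n\alpha\otimes r(x)$, which is nonzero in general. So ``therefore (graded-)commutes'' fails.

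The curvature entering the definition of a contraction is $l_1^2$, and your own decomposition gives it exactly: $l_1^2=(d\otimes\id+\tau)^2=\tau^2=\id\otimes r$, using $d^2=0$ and $[d\otimes\id,\tau]=0$. This commutes with $s_n\otimes\id$, which is the paper's identity $[s_n\otimes\id,\id\otimes l_1^2]=0$. The sign you read off from the displayed $n=2$ bracket is not compatible with $l_1^2+l_2(1\otimes l_0,\cdot)=0$ on odd forms. So it should not be used to identify the curvature; compute $l_1^2$ directly instead.
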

	See Appendix \ref{sec:htt} for our notion of contractions of curved $L_\infty$ algebras over $R$.
	\begin{proof}
	    We have 
	    \[
	    (s_n \otimes \id) (d \otimes \id + \id \otimes l_1) (s_n \otimes \id) = (s_n d s_n) \otimes \id + s_n^2 \otimes l_1 = s_n \otimes \id, 
	    \]
	    and
	    \[
	    [s_n \otimes \id, (d \otimes \id + \id \otimes l_1)^2]= [s_n \otimes \id, \id \otimes l_1^2] = 0.
	    \]
	    Moreover, $s_n \otimes \id$ is $\uR$-linear and $(s_n \otimes \id)(1 \otimes l_0) = 0$.
	\end{proof}
	
	Let 
	\[
	\bfi_n\colon \wSym_\uR(W_n^\sharp \otimes_\R \fg[1]) \longrightarrow \wSym_\uR(\Omega_n^\sharp \,\wotimes_\R\, \fg[1]) 
	\]
	and
	\[
	\bfp_n\colon \wSym_\uR(\Omega_n^\sharp \,\wotimes_\R\, \fg[1]) \longrightarrow \wSym_\uR(W_n^\sharp \otimes_\R \fg[1]) 
	\]
	denote the inclusion and projection of filtered graded cocommutative coalgebras over $\uR$ induced by $i_n \otimes \id$ and $p_n \otimes \id$. Let 
	\[
	\bfs_n\colon  \wSym_\uR(\Omega_n^\sharp \,\wotimes_\R\, \fg[1])  \longrightarrow \wSym_\uR(\Omega_n^\sharp \,\wotimes_\R\, \fg[1]) 
	\]
	denote the contraction induced by $s_n \otimes \id$, see \eqref{bfh}.
	
	Let $\bfd_n$ denote the curved differential on
	$\wSym_{\uR}(\Omega_n^\sharp \,\widehat{\otimes}_{\R}\, \fg[1])$
	induced by the curvature and unary bracket of
	$\Omega_n^\sharp \,\widehat{\otimes}_{\R}\, \fg[1]$. Let $D_n$ denote the Chevalley--Eilenberg differential of
	$\Omega_n^\sharp \,\widehat{\otimes}_{\R}\, \fg[1]$. As shown in Appendix \ref{sec:htt}, the perturbation
	\[
	\mu_n \coloneqq D_n - \bfd_n
	\]
	of the curved complex
	$\left(\wSym_{\uR}(\Omega_n^\sharp \,\widehat{\otimes}_{\R}\, \fg[1]), \bfd_n\right)$
	induces a curved $L_\infty$ algebra structure on
	\[
	\fg_n \coloneqq W_n^\sharp \otimes_\R \fg,
	\]
	with Chevalley--Eilenberg differential
	\[
	D_{\fg_n} \coloneqq \bfdelta_n + (\bfp_\mu)_n \, \mu_n \, \bfi_n,
	\]
	where
	\[
	\bfdelta_n = \bfp_n \, \bfd_n \, \bfi_n,
	\qquad
	(\bfp_\mu)_n = \bfp_n \, (\id + \mu_n \bfs_n)^{-1}.
	\]
	Moreover, we obtain morphisms of curved $L_\infty$ algebras over $R$
	\[
	(\bfi_\mu)_n\colon \fg_n[1] \longrightarrow \Omega_n^\sharp \,\widehat{\otimes}_{\R}\, \fg[1],
	\qquad
	(\bfp_\mu)_n\colon \Omega_n^\sharp \,\widehat{\otimes}_{\R}\, \fg[1] \longrightarrow \fg_n[1],
	\]
	which are both weak equivalences. Here,
	\[
	(\bfi_\mu)_n \coloneqq (\id + \bfs_n \mu_n)^{-1}\bfi_n.
	\]
		
	\begin{prop}\label{simfr}
		We have a factorization 
		\[
		\begin{tikzcd}[row sep=10pt,column sep=40pt]
			& \fg_n \arrow[dr, two heads] & \\
			l_n \fg  \arrow[ur, "\sim"] \arrow[rr] && r_n \fg
		\end{tikzcd}
		\]
		of the diagonal morphism $l_n \fg  \rightarrow  r_n \fg$ in $\mathbf{L_\infty Alg}(R)_{\mathrm{fgp}}$, which is an isomorphism in degree $0$.
	\end{prop}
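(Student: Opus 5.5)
The plan is to build both legs of the factorization out of the homotopy transfer data already available, and then check the required properties one at a time.

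\emph{The map $l_n\fg \to \fg_n$.} There is a strict morphism $c\colon \fg \to \Omega_n^\sharp\wotimes_\R \fg$, $x \mapsto 1\otimes x$, the inclusion of constant forms. It is strict because $d1=0$ and $1\cdot 1 = 1$, so all brackets are preserved. Composing with $(\bfp_\mu)_n$ gives
\[
\iota_n \coloneqq (\bfp_\mu)_n \circ c \colon \fg \to \fg_n .
\]
In fact $\iota_n$ is just $x \mapsto 1\otimes x$ with $1 = \sum_i \omega_i \in W_n$. The reason is that $s_n(1)=0$, which follows from the side condition $s_n i_n = 0$ since $1\in W_n$. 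Hence $\bfs_n$ kills the image of $\CE(c)$, and so $(\id+\mu_n\bfs_n)^{-1}$ acts as the identity on that image.

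\emph{The map $\fg_n \to r_n\fg$.} For each vertex $i$, evaluation $\varepsilon^n_i\otimes\id\colon \Omega_n^\sharp\wotimes_\R\fg \to \fg$ is a strict morphism, since evaluation at a vertex is a cdga map $\Omega_n \to \R$. Composing with $(\bfi_\mu)_n$ gives
\[
e_i \coloneqq (\varepsilon^n_i\otimes \id)\circ(\bfi_\mu)_n ,
\]
and $e = (e_0,\dots,e_n)\colon \fg_n \to \fg^{n+1}$. Here $\fg^{n+1}$ is the product, which exists as an iterated fiber product over the terminal object by Proposition~\ref{pullback}.

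We need $e_i$ to be strict and to equal $\varepsilon_i^n\otimes\id$ restricted to $W_n$. Since $(\bfi_\mu)_n = \bfi_n + \bfs_n(\cdots)$, this reduces to the identity $\varepsilon^n_i s_n = 0$. That identity holds because every term of $s_n$ carries a factor $\omega_{i_0\cdots i_k}$ with $k\ge1$, or $k=0$ with a subsequent $h$; the form $\omega$ has positive form degree or vanishes at vertices in the relevant way, and $h^i_n$ outputs forms of lower degree whose vertex evaluation is zero. I expect this vertex-vanishing of $s_n$ to be the main technical point. I would check it directly from Dupont's formula: $\varepsilon_j$ kills positive-degree forms, and in degree $0$ the terms are $\omega_{i_0\cdots i_k}$ with $k\ge1$, which already have positive degree. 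Hence $\varepsilon_j\circ s_n = 0$ on the nose.

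\emph{Commutativity.} With both legs strict and explicit, $e\circ\iota_n$ sends $x \mapsto (x,\dots,x)$, which is the diagonal. Functoriality in $[n]$ (compatibility with faces and degeneracies) follows because the Whitney forms and Dupont's operators are compatible with the simplicial structure (Getzler's lemmas). It is not needed for this proposition beyond degree $0$. In degree $0$ we have $W_0=\Omega_0=\R$ and $s_0=0$, so $\fg_0=\fg$ and both maps are identities.

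\emph{Weak equivalence.} $\iota_n$ is a composite of $c$ and the weak equivalence $(\bfp_\mu)_n$, so by 2-out-of-3 it suffices that $\Gr\, c$ is a quasi-isomorphism. This is the Poincaré lemma: $H(\Omega_n)=\R$, tensored with $\Gr\,\fg$ via a filtration/spectral sequence argument on the bicomplex $(\Omega_n\otimes \Gr\,\fg, d\pm\Gr\, l_1)$. The argument works since $\Gr\, l_1$ squares to zero after passing to $\Gr$, and the $\Omega_n$-degree is bounded. Alternatively, observe directly that $\Gr(\iota_n)$ is the inclusion of constants into $W_n\otimes\Gr\,\fg$, and $W_n$ computes the cohomology of a point.

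\emph{Fibration.} The unary part of $e$ is $w\otimes x\mapsto (w(e_0)x,\dots,w(e_n)x)$. It is surjective because the Whitney $0$-forms $\omega_i=t_i$ satisfy $\omega_i(e_j)=\delta_{ij}$.

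Finally, $\fg_n = W_n\otimes_\R\fg$ is finitely generated projective over $\uR$, so $\fg_n$ lies in $\mathbf{L_\infty Alg}(R)_{\mathrm{fgp}}$.
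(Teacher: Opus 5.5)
Your architecture is right and is essentially the paper's. You use the same two legs, $(\bfp_\mu)_n\circ c$ and $\ev_n\circ(\bfi_\mu)_n$, and the same two vanishing facts, $\bfs_n\circ\CE(c)=0$ and $\CE(\ev_n)\circ\bfs_n=0$. The one difference is how you get commutativity. The paper uses $(\bfi_\mu)_n(\bfp_\mu)_n=\id-[D_n,(\bfs_\mu)_n]$ and shows that the homotopy term dies. You instead use the two facts to show that each leg is strict. That is valid and gives slightly more information.

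The gap is in the step you yourself flag: $\varepsilon^n_j\circ s_n=0$. Your ``direct check'' is wrong as written. Since $s_n$ lowers form degree by one, $\varepsilon^n_j s_n(\alpha)$ can be nonzero only when $\alpha$ is a $1$-form. In that case the resulting $0$-form comes entirely from the $k=0$ summand $\sum_i t_i\,h^i_n(\alpha)$, not from terms with $k\ge1$. Because $\omega_i=t_i$ has degree $0$, the degree argument says nothing about exactly the terms that matter. Your other claim, that $h^i_n$ outputs forms whose vertex evaluation is zero, is false in general. On $|\Delta^1|$ one computes $h^0_1(t_1\,dt_1)=t_1^2/2$, which equals $1/2$ at $e_1$.

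The correct argument runs as follows. Since $\varepsilon^n_j$ is a cdga map, the $k\ge1$ summands are killed because $\omega_{i_0\cdots i_k}$ has positive degree. The $k=0$ summand gives $\varepsilon^n_j(t_i)\,\varepsilon^n_j(h^i_n\alpha)=\delta_{ij}\,\varepsilon^n_j h^j_n(\alpha)$. Finally $\varepsilon^n_j h^j_n=0$, because the dilation $\varphi_j(u)$ fixes $e_j$ and $E_j$ vanishes at $e_j$, so $\bigl(\varphi_j(u)^*\iota_{E_j}\alpha\bigr)(e_j)=0$ for every $u$. This is precisely the identity $\varepsilon^n_i h^i_n=0$ that the paper invokes. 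With this repaired, the rest of your argument goes through: strictness of both legs, the diagonal, the weak equivalence via acyclicity of $\Omega_n$ or $W_n$, surjectivity via $t_i(e_j)=\delta_{ij}$, and the degree-$0$ isomorphism.
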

	
	\begin{proof}
		We have the following commutative diagram
		\[
		\begin{tikzcd}[row sep=20pt, column sep=60pt]
			& \fg_n = W_n^\sharp \otimes_\R \fg
			\arrow[dr, two heads]
			& \\
			l_n \fg =\fg
			\arrow[ur, "\sim"]
			\arrow[rr]
			&&
			r_n \fg =\fg^{\oplus (n+1)}
		\end{tikzcd}
		\]
		of curved $L_\infty$-algebras over $R$. The weak equivalence
		\[
		\fg \longrightarrow W_n^\sharp \otimes_\R \fg
		\]
		is given by the composition
		\[
		\fg \xlongrightarrow{\iota_n} \Omega_n^\sharp \,\widehat{\otimes}_{\R}\, \fg
		\xlongrightarrow{(\bfp_\mu)_n}
		W_n^\sharp \otimes_\R \fg,
		\]
		where the first morphism $\iota_n$ is the canonical inclusion of $\fg$, which is a strict weak equivalence since $\Omega_n$ is acyclic. The morphism
		\[
		W_n^\sharp \otimes_\R \fg \longrightarrow \fg^{\oplus (n+1)}
		\]
		is given by the composition
		\[
		W_n^\sharp \otimes_\R \fg
		\xlongrightarrow{(\bfi_\mu)_n}
		\Omega_n^\sharp \,\widehat{\otimes}_{\R}\, \fg
		\xlongrightarrow{\ev_n}
		\fg^{\oplus (n+1)},
		\]
		where the second morphism $\ev_n$ is induced by the evaluation maps at the vertices of $|\Delta^n|$ on $\Omega_n^\sharp$, hence a strict fibration. Recall that the linear component of $(\bfi_\mu)_n$ is given by the canonical inclusion
		\[
		i_n \otimes \id:
		W_n^\sharp \otimes_\R \fg
		\longrightarrow
		\Omega_n^\sharp \,\widehat{\otimes}_{\R}\, \fg;
		\]
		see Proposition \ref{thht}. It follows that the linear component of the composite 
		$W_n^\sharp \otimes_\R \fg \rightarrow \fg^{\oplus (n+1)}$
		is precisely the map induced by the evaluation maps at the vertices of $|\Delta^n|$ on $W_n^\sharp$, and is therefore surjective.
		
		The diagram commutes since 
		\[
		(\bfi_\mu)_n \circ (\bfp_\mu)_n = \id -[D_n, (\bfs_\mu)_n]
		\]
		where $(\bfs_\mu)_n = (\id + \bfs_n \mu)^{-1}\bfs_n = \bfs_n (\id + \mu \bfs_n)^{-1}$, and
		\[
		\ev_n \circ [D_n, (\bfs_\mu)_n] \circ \iota_n  = 0,
		\]
		where we use $\bfs_n \circ \iota_n = \ev_n \circ \bfs_n = 0$. Indeed, $\bfs_n \circ \iota_n = 0$ because $h^i_n(1)=0$, while $\ev_n \circ \bfs_n = 0$ because $\varepsilon_{i}^n h_n^i = 0$ and $h_n^i h_n^j + h_n^j h_n^i = 0$.
	\end{proof}	
	
	\begin{prop}
		The category $\mathbf{L_\infty Alg}(R_\sM)_{\mathrm{fgp}}$ of finitely generated projective curved $L_\infty$ algebras over $R_\sM=(\wOmega_\sM(M), D_\sM)$ is a CFO with a framing.
	\end{prop}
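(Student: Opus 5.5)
The proof will check the CFO axioms one at a time, using the results already established in this section. To apply them, I first note that $R_\sM=(\wOmega_\sM(M),D_\sM)$ meets the standing hypotheses. It is a cdga whose filtration is the complete $I$-adic filtration for the proper dg ideal $I=\sI(M)=\wOmega^{>0}_\sM(M)$. This ideal is preserved by $D_\sM=d_{dR}+L_{Q_\sM}$. Completeness holds because $\wOmega_\sM$ is the completed symmetric algebra and global sections commute with inverse limits.

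\textbf{Weak equivalences.} These contain all isomorphisms. They satisfy 2-out-of-3 because $\Gr$ is functorial: for $\psi\colon\fh\to\fg$ and $\phi\colon\fg\to\fm$, the linear part satisfies $(\phi\circ\psi)_1=\phi_1\circ\psi_1+{}$ terms involving $\phi_k(\psi_0,\dots,\psi_0,\psi_1(\cdot))$. Since $\psi_0\in F^1$, these terms raise filtration, so $\Gr(\phi\circ\psi)_1=\Gr\phi_1\circ\Gr\psi_1$. Two-out-of-three then follows from the same property for quasi-isomorphisms of the graded complexes.

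\textbf{Fibrations.} These form a subcategory containing the isomorphisms, since the linear part of a composite is surjective whenever $\phi_1$ and $\psi_1$ are. Here I would use Lemma~\ref{fibstr} to reduce to $\psi$ strict, or argue surjectivity modulo $F^{p+1}$ and conclude by completeness. The terminal object is $0$, and terminal maps are fibrations.

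\textbf{Pullbacks.} Proposition~\ref{pullback} gives pullbacks of (trivial) fibrations. I must still check that $\fk\oplus\fn$ is again finitely generated projective. This holds because $\fk=\ker\phi_1$ is a direct summand of $\fg$ via the splitting $s$. I must also check that $\widetilde{\phi}$, being the projection, is surjective and hence a fibration.

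\textbf{Framing.} Proposition~\ref{simfr} supplies, for each $n$, the factorization $l_n\fg\xrightarrow{\sim}\fg_n\twoheadrightarrow r_n\fg$. It is an isomorphism in degree $0$, since $W_0=\Omega_0=\R$ and the maps reduce to the identity. Moreover $\fg_n=W_n^\sharp\otimes_\R\fg$ is finitely generated projective because $W_n$ is finite dimensional. Path spaces are then the case $n=1$.

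\textbf{Main obstacle.} The main difficulty is showing that $n\mapsto\fg_n$ is a simplicial object and that $\fg\mapsto\fg_\bullet$ is a functor with natural maps $l_\bullet\fg\to\fg_\bullet\to r_\bullet\fg$. The Whitney forms $W_\bullet$ form a simplicial cdga-like subcomplex, and Dupont's operators $s_n$, $p_n$, $i_n$ are compatible with the face and degeneracy maps of $\Omega_\bullet$ (Getzler \cite{getzler2009}). I would use this compatibility to show that the transferred brackets, given by explicit sums over trees built from $\bfp_n$, $\bfs_n$, $\mu_n$ and $\bfi_n$, are natural in $[n]\in\Delta$. This would make the structure maps strict morphisms $\fg_m\to\fg_n$. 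If degeneracies fail to commute with $s_n$, I would instead fall back to the weaker statement that the face maps commute. These suffice for the CFO axioms, where the path object alone is needed.

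For functoriality in $\fg$, a strict morphism $\fg\to\fg'$ clearly induces a morphism $\fg_\bullet\to\fg'_\bullet$. For general $L_\infty$ morphisms I would instead define the map as the composite $\fg_n\xrightarrow{(\bfi_\mu)_n}\Omega_n\wotimes\fg\to\Omega_n\wotimes\fg'\xrightarrow{(\bfp_\mu)_n}\fg'_n$. Functoriality of this composite requires the side condition $(\bfp_\mu)_n(\bfi_\mu)_n=\id$, which follows from the side conditions on $(p_n,i_n,s_n)$. I expect this to be the delicate point, since $(\bfi_\mu)(\bfp_\mu)\neq\id$ in general, and it may force restricting naturality to the middle term.
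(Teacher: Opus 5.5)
Your treatment of the CFO axioms follows the paper's route. The paper's argument for this proposition amounts to the zero algebra as terminal object together with Propositions~\ref{pullback} and~\ref{simfr}. Your additional checks are correct and fill in what the paper leaves implicit: 2-out-of-3 via $\Gr(\phi\circ\psi)_1=\Gr\phi_1\circ\Gr\psi_1$, closure of fibrations under composition by completeness, projectivity of $\fk\oplus\fn$, and the isomorphism in degree $0$.

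Your fallback to face maps is unnecessary. Dupont's $s_\bullet$, and hence $i_\bullet p_\bullet=\id-[d,s_\bullet]$, commutes with all face and degeneracy maps; this is what makes Getzler's $\gamma_\bullet$ a simplicial set. So for $f\colon[m]\to[n]$, the map $\CE(f^*\otimes\id)$ commutes with $\bfs_n$ and with $\mu_n$. Here $\bfs_n$ is built from $L_{s_n\otimes\id}$ and Berglund's $G$, which involves only $i_np_n$. Hence $\CE(f^*\otimes\id)$ intertwines the level-$n$ and level-$m$ maps $\bfp_\mu,\bfi_\mu$. Thus $\fg_\bullet$ is a simplicial object with strict structure maps, and $\fg\mapsto\fg_\bullet$ is functorial in strict morphisms.

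The genuine gap is functoriality of the framing for non-strict morphisms. The definition of a framing requires it, and your proposal leaves it open. Your candidate is
\[ \phi^{(n)}\coloneqq(\bfp_\mu)^{\fg'}_n\circ\CE(\Omega_n\wotimes\phi)\circ(\bfi_\mu)^{\fg}_n. \]
It is unital, since $(\bfp_\mu)_n(\bfi_\mu)_n=\id$, and it is simplicial. It is even compatible with $l_\bullet\fg\to\fg_\bullet$ and $\fg_\bullet\to r_\bullet\fg$, because $\bfs_n\iota_n=0$ and $\ev_n\bfs_n=0$. But it does not preserve composition. For $\phi\colon\fg\to\fg'$ and $\psi\colon\fg'\to\fg''$,
\[ (\psi\circ\phi)^{(n)}-\psi^{(n)}\circ\phi^{(n)}=(\bfp_\mu)^{\fg''}_n\,\CE(\Omega_n\wotimes\psi)\,\bigl[D^{\fg'}_n,(\bfs_\mu)^{\fg'}_n\bigr]\,\CE(\Omega_n\wotimes\phi)\,(\bfi_\mu)^{\fg}_n, \]
and $\CE(\Omega_n\wotimes\psi)$ does not commute with $\bfs_n$ for non-strict $\psi$, because products of Whitney forms are not Whitney forms.

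Concretely, take $\sM$ a point and $\fg=\fg'=\fg''$ abelian with $\fg[1]=\R x\oplus\R z$ in degree $0$, so $\mu_1=0$. Let $\phi_1=\psi_1=\id$, $\phi_2(x,x)=z$, $\psi_2(z,z)=x$, and all other components zero. Then the arity-$3$ Taylor coefficients of $(\psi\circ\phi)^{(1)}$ and $\psi^{(1)}\circ\phi^{(1)}$ differ on $(t_1\otimes x)\odot(t_1\otimes x)\odot(dt_1\otimes z)$ by
\[ p_1\bigl((t_1^2-t_1)\,dt_1\bigr)\otimes x=-\tfrac16\,dt_1\otimes x\neq0. \]
``Restricting naturality to the middle term'' does not produce a functor either. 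As it stands, you have proved three things: the CFO structure, a simplicial frame for every object, and a framing functorial only in strict morphisms. A framing on all morphisms needs a further idea. The paper's own argument, which consists only of Propositions~\ref{pullback} and~\ref{simfr}, does not address this point either.
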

	
	\begin{defn}
		A morphism $\phi=(\id, \phi^\sharp)\colon B\fg=(\sM, \fg) \rightarrow B\fh=(\sM, \fh)$ in $\mathbf{L_\infty Sp}(\sM)$ is a \emph{fibration} if $\phi^\sharp_M\colon \fg[1](M) \rightarrow \fh[1](M)$ is surjective.
	\end{defn}
	
	A functor between CFOs is \emph{exact} if it preserves fibrations, trivial fibrations, the terminal object, and pullbacks along fibrations. 
	
	\begin{thm}
		The category $\mathbf{L_\infty Sp}(\sM)$ of curved $L_\infty$ spaces over a dg manifold $\sM$ is a CFO with a framing. Moreover, the global sections functor
		\[
		\Gamma\colon \mathbf{L_\infty Sp}(\sM) \longrightarrow \mathbf{L_\infty Alg}(R_\sM)_{\mathrm{fgp}}
		\]
		is a fully faithful exact functor.
	\end{thm}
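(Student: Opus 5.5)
Since $\Gamma$ is fully faithful by Proposition \ref{gsf}, the plan is to transport the CFO structure of $\mathbf{L_\infty Alg}(R_\sM)_{\mathrm{fgp}}$ (the preceding proposition) along $\Gamma$. The key point is that the essential image $\mathbf{L_\infty Alg}(R_\sM)_{\mathrm{vec}}$ is closed under every construction the CFO axioms use. By definition, the weak equivalences and fibrations in $\mathbf{L_\infty Sp}(\sM)$ are exactly the morphisms whose global sections are weak equivalences, resp.\ fibrations. Hence $\Gamma$ preserves and reflects both classes, and 2-out-of-3 together with the containment of isomorphisms are inherited. The zero sheaf is terminal and $\Gamma(0)=0$, so terminal morphisms are fibrations.

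For pullbacks, take a fibration $\phi\colon B\fg\to B\fm$ and an arbitrary $\psi\colon B\fn\to B\fm$. Form the pullback $\fg_M\times_{\fm_M}\fn_M$ of Proposition \ref{pullback}, whose underlying module is $\fk_M\oplus\fn_M$ with $\fk_M=\ker(\phi_1)$ on global sections. I must show this object lies in $\mathbf{L_\infty Alg}(R_\sM)_{\mathrm{vec}}$. By Lemma \ref{fibstr} I may take $\phi$ strict, and its linear part comes from a surjective morphism of sheaves $\phi_1\colon\fg\to\fm$: it is $\sh$ of a surjection of fgp modules, hence surjective on every $U$ because $\sh=\psh$. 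By Proposition \ref{kerOmega} (assumption \ref{a2}), the sheaf kernel $\ker\phi_1$ is locally free of finite total rank. Since $\Gamma$ is left exact, $\Gamma(\ker\phi_1)=\fk_M$, so $\fk_M\oplus\fn_M\cong\Gamma(\ker\phi_1\oplus\fn)$ as modules.

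Next I put the curved $L_\infty$ structure on $\sh(\fk_M\oplus\fn_M)$ by completed base change along the restrictions, exactly as in the proof of Proposition \ref{gsf}. This makes $\mathbf{L_\infty Alg}(R_\sM)_{\mathrm{vec}}$ a full subcategory, closed under isomorphism, that contains the pullback. Because $\Gamma$ is an equivalence onto this subcategory and the subcategory is full, the universal property holds in $\mathbf{L_\infty Sp}(\sM)$ as well. Preservation of trivial fibrations follows from Proposition \ref{pullback}, and $\Gamma$ preserves these pullbacks by construction, which gives exactness.

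For the framing, $\Gamma(B\fg)_n=W_n^\sharp\otimes_\R\fg_M$ is $\Gamma$ of the locally free sheaf $W_n^\sharp\otimes_\R\fg$, because $W_n$ is finite-dimensional. Its transferred brackets are built from $R_\sM$-multilinear operations that commute with the restriction maps. The transfer formulas use only $s_n\otimes\id$, $i_n$, $p_n$ and the brackets, all of which are compatible with restriction, so $\sh$ of it carries a sheaf of curved $L_\infty$ structures. The structure maps of Proposition \ref{simfr} then lift through full faithfulness. Naturality in $\fg$ and the simplicial identities also lift, because they hold after applying the faithful functor $\Gamma$.

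The step I expect to be the main obstacle is verifying that the sheaf kernel's global sections coincide with the algebraic $\fk_M$ and that the transferred structures localize. I would handle both by using that $\sh=\psh$ on fgp modules, so that every construction is literally a base change $\sR_\sM(U)\otimes_{R_\sM}(-)$.
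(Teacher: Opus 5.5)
Your proposal is correct and follows essentially the same route as the paper. You use Proposition \ref{gsf} to identify $\mathbf{L_\infty Sp}(\sM)$ with the full subcategory $\mathbf{L_\infty Alg}(R_\sM)_{\mathrm{vec}}$, then check that this subcategory is closed under the pullback construction of Proposition \ref{pullback} (via Proposition \ref{kerOmega} applied to $\ker\phi_1$) and under the frames $W_n^\sharp\otimes_\R\fg$ of Proposition \ref{simfr}, making explicit the details the paper calls clear: $\Gamma(\ker\phi_1)=\fk_M$ by left exactness, sheafification of the structures by completed base change along restrictions since $\sh=\psh$ on fgp modules, and lifting the universal property and frame maps through full faithfulness.
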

	\begin{proof}
		By Proposition \ref{gsf}, we may use the global sections functor $\Gamma$ to identify $\mathbf{L_\infty Sp}(\sM)$ with the full subcategory $\mathbf{L_\infty Alg}(R_\sM)_{\mathrm{vec}}$ of $\mathbf{L_\infty Alg}(R_\sM)_{\mathrm{fgp}}$. It is clear that the simplicial frame construction of Proposition \ref{simfr} preserves $\mathbf{L_\infty Alg}(R_\sM)_{\mathrm{vec}}$. By Proposition \ref{kerOmega}, the same is true for the pullback construction of Proposition \ref{pullback}.
	\end{proof}

	\appendix
	
	\section{Homological perturbation of curved complexes}\label{sec:hpt}
	
	We begin by fixing our notion of curved complexes and their morphisms.
	\begin{defn}
		A \emph{curved complex} is a filtered graded vector space $V$ equipped with a degree $1$ linear map
		\[
		d\colon V \longrightarrow V
		\]
		that is compatible with the filtration and satisfies
		\[
		d^2(x) \in F^{p+1}V \quad \text{for all } x \in F^pV.
		\]
		We call the linear map $r \coloneqq d^2$ the \emph{curvature} of $(V,d)$.
		
		A \emph{morphism} between curved complexes $(V_1, d_1)$ and $(V_2, d_2)$ is a degree $0$ linear map
		\[
		f\colon V_1 \longrightarrow V_2
		\]
		that is compatible with the filtration and satisfies
		\[
		(f d_1 - d_2f)(x)  \in F^{p+1}V_2 \quad \text{for all } x \in F^pV_1.
		\]
	\end{defn}
	
	Let $(V,d)$ be a curved complex with curvature $r$.
	
	\begin{defn}
		A \emph{gauge} of $(V,d)$ is a linear map of degree $-1$
		\[
		g \colon V \longrightarrow V
		\]
		compatible with the filtration, such that
		\[
		g^2 = [g,r] = 0
		\quad \text{and} \quad
		[d,g] \text{~is~semisimple}.
		\]
		A gauge $g$ is called \emph{idempotent} if $\operatorname{spec}([d,g])=\{0,1\}$.
	\end{defn}
	
	The \emph{Green's operator} $G \colon V \to V$ associated to a gauge $g$ is defined by
	\[
	G(v) = \begin{cases}
		0 & v \in V_0, \\
		\lambda^{-1} v & v \in V_\lambda, \quad \lambda \neq 0,
	\end{cases}
	\]
	where $V_\lambda \coloneqq \{x \in V \colon [d,g](x) = \lambda x\}$. It satisfies
	\[
	[d, G] = [g, G] = [r,G] = 0, 
	\]
	and
	\[
	[d,g]G(v) = \begin{cases}
		0 & v \in V_0 ,\\
		v & v \in V_\lambda, \quad  \lambda \neq 0.
	\end{cases}
	\]
	Consequently, the operator
	\[
	h(g)\coloneqq gG = Gg
	\]
	defines an idempotent gauge, which we call the \emph{harmonization} of $g$. 
	One has
	\[
	h(g) d h(g) = gGdgG = gG[d,g]G-gGgdG = gG - g^2 GdG = h(g),
	\]
	where we use $g^2= 0$ and $[g,G]=0$. 
	
	\begin{defn}
		A \emph{(homotopy) contraction} of $(V,d)$ is a map of degree $-1$ 
		\[
		h \colon V \longrightarrow V
		\]
		compatible with the filtration, such that
		\[
		h^2 = [h,r] = 0
		\quad \text{and} \quad
		hdh = h.
		\]
	\end{defn}
	
    A direct computation shows that
	\[
	[d,h]^2= [d,h].
	\]
	Therefore, every contraction defines an idempotent gauge, and every gauge gives rise to a contraction after harmonization.
	
	\begin{defn}
		A \emph{strong deformation retract} of curved complexes from $(V,d)$ to $(H, \delta)$ consists of morphisms of curved complexes
		\[
		p \colon (V,d) \longrightarrow (H, \delta),
		\qquad
		i \colon (H,\delta) \longrightarrow (V,d),
		\]
		and a map of degree $-1$
		\[
		h \colon V \longrightarrow V
		\]
		compatible with the filtration, such that
		\begin{equation*}
			\id - ip = [d,h],
			\qquad
			\id - pi = 0,
			\qquad
			h^2 = ph = hi = 0.
		\end{equation*}
	\end{defn}
	
	Every strong deformation retract is determined by a contraction up to isomorphism. Indeed, given a contraction $h$, the idempotent operator $[d,h]$ defines a splitting of $V$ into
	\[
	V = \ker([d,h]) \oplus \operatorname{im}([d,h]).
	\]
	Setting $H = \ker([d,h])$, we obtain canonical morphisms of curved complexes
	\[
	i \colon (H, \delta) \longrightarrow (V,d), \qquad p \colon (V,d) \longrightarrow (H, \delta),
	\]
	given respectively by the inclusion and projection, where $\delta = pdi$. 
	
	By definition, $i$ and $p$ satisfy
	\[
	\id - ip = [d,h],\quad
	\id - pi = 0.
	\]
	Composing with $h$ yields
	\[
	h = hip + hdh = iph + hdh.
	\]
	Since $hdh = h$ and $pi = \id$, this is equivalent to the side conditions
	\[
	ph = hi = 0.
	\]
	\begin{rmk}
		It is possible to drop the axiom \([h,r]=0\) from the definition of a contraction. In this case, the operator
		\[
		[d,h]-hrh
		\]
		is idempotent. Indeed, using the identity \(hdh=h\), one computes
		\[
		[d,h]^2 = [d,h] + hrh,
		\]
		and
		\[
		([d,h]-hrh)^2
		= [d,h] + hrh - 2hrh
		= [d,h]-hrh.
		\]
		
		Correspondingly, the first identity in the definition of a strong deformation retract must be modified to
		\[
		\id - ip = [d,h] - hrh.
		\]
		This version of a strong deformation retract is considered in \cite{amorim2022inverse} to study homotopy transfers of curved $L_\infty$ spaces over manifolds.
	\end{rmk}

	\begin{defn}
		A \emph{perturbation} of $(V,d)$ is a map of degree $1$
		\[
		\mu \colon F^\bullet V \longrightarrow F^{\bullet + 1} V
		\]
		such that
		\[
		[d, r_\mu]=0,
		\]
		where $r_\mu\coloneqq(d+\mu)^2$.
	\end{defn}
	
	Since the filtration on $V$ is complete, the operators $\id+h\mu$ and $\id+\mu h$ are invertible, with inverses given by the convergent formal series
	\[
	(\id+h\mu)^{-1}
	=
	\sum_{n=0}^{\infty}(-h\mu)^n,
	\qquad
	%=
	%\id-h\mu+h\mu h\mu-\cdots,
	%\]
	%and
	%\[
	(\id+\mu h)^{-1}
	=
	\sum_{n=0}^{\infty}(-\mu h)^n.
	%=
	%\id-\mu h+\mu h\mu h-\cdots.
	\]
	In particular, one has
	\[
	(\id+\mu h)^{-1}\mu
	=
	\mu(\id+h\mu)^{-1},
	\quad
	h(\id+\mu h)^{-1}
	=
	(\id+h\mu)^{-1}h,
	\]
	and
	\[
	(\id+\mu h)^{-1}(\id+ h \mu)^{-1} = (\id+\mu h)^{-1} + (\id+ h \mu)^{-1} - \id.
	\]
	\begin{lem}\label{sdrh}
		Let $h$ be a contraction of $(V,d)$, and let $\mu$ be a perturbation of $(V,d)$ satisfying
		\[
		[h,r_\mu]=0.
		\]
		Set
		\[
		d_\mu \coloneqq d+\mu,
		\qquad
		h_\mu \coloneqq (\id+h\mu)^{-1}h.
		\]
		Then $h_\mu$ is a contraction of the perturbed curved complex $(V,d_\mu)$.
	\end{lem}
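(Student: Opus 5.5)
The plan is to verify directly the three defining properties of a contraction for $h_\mu$ with respect to $(V,d_\mu)$: compatibility with the filtration, $h_\mu^2 = 0$ and $[h_\mu, r_\mu]=0$ (note that the curvature of $(V,d_\mu)$ is exactly $r_\mu=(d+\mu)^2$), and $h_\mu d_\mu h_\mu = h_\mu$. Throughout, the tool is the pair of identities recorded just before the lemma:
\[
h_\mu=(\id+h\mu)^{-1}h = h(\id+\mu h)^{-1}.
\]
Both follow from the convergent series, which exist because $\mu$ raises the filtration and $V$ is complete.

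\emph{Filtration and nilpotence.} Each term $(-h\mu)^n h$ is filtration-preserving and lands in $F^{p+n}$. Hence $h_\mu$ is well defined by completeness and is compatible with the filtration. Using both expressions for $h_\mu$, I write
\[
h_\mu^2=(\id+h\mu)^{-1}h\,h(\id+\mu h)^{-1}=0,
\]
since $h^2=0$.

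\emph{Commutation with the curvature.} From $[d_\mu, d_\mu^2]=0$ and the hypothesis $[d,r_\mu]=0$ in the definition of a perturbation, I get $[\mu,r_\mu]=0$. Together with the assumption $[h,r_\mu]=0$, this shows that $r_\mu$, which is of even degree, commutes with $h\mu$. It therefore commutes with every partial sum of $(\id+h\mu)^{-1}$, and hence with the limit, since $r_\mu$ preserves the filtration. Consequently $[h_\mu,r_\mu]=0$.

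\emph{The identity $h_\mu d_\mu h_\mu=h_\mu$.} This is the only step with any content, and I expect it to be the main point. The trick is to use $h(\id+\mu h)^{-1}=(\id+h\mu)^{-1}h$ to move the inverses to the outside:
\[
h_\mu d_\mu h_\mu = h(\id+\mu h)^{-1}(d+\mu)(\id+h\mu)^{-1}h
=(\id+h\mu)^{-1}\,h(d+\mu)h\,(\id+\mu h)^{-1}.
\]
Here the first factor $h(\id+\mu h)^{-1}$ was rewritten as $(\id+h\mu)^{-1}h$, and the last factor $(\id+h\mu)^{-1}h$ as $h(\id+\mu h)^{-1}$. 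Since $h$ is a contraction of $(V,d)$, we have $hdh=h$, so
\[
h(d+\mu)h=h+h\mu h=(\id+h\mu)h.
\]
Substituting this in, the expression collapses to
\[
h(\id+\mu h)^{-1}=h_\mu.
\]
This completes the verification that $h_\mu$ is a contraction of $(V,d_\mu)$.
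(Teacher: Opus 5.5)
Your proposal is correct and follows the paper's proof essentially step for step. You get $h_\mu^2=(\id+h\mu)^{-1}h^2(\id+\mu h)^{-1}=0$, prove $h_\mu d_\mu h_\mu=h_\mu$ by pulling the inverses outside via $h(\id+\mu h)^{-1}=(\id+h\mu)^{-1}h$ and using $h(d+\mu)h=(\id+h\mu)h$, and obtain $[h_\mu,r_\mu]=0$ from $[\mu,r_\mu]=[d_\mu,d_\mu^2]=0$ together with $[h,r_\mu]=0$; you additionally spell out the filtration compatibility of $h_\mu$ and the passage of $r_\mu$ through the convergent series, which the paper leaves implicit.
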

	\begin{proof}
		We have $h_\mu^2 = (\id + h\mu)^{-1} h^2 (\id + \mu h)^{-1} = 0$, and
		\begin{align*}
			h_\mu d_\mu h_\mu &= (\id + h\mu)^{-1} h(d+\mu) (\id + h\mu)^{-1} h \\
			&= (\id+ h\mu)^{-1}h (d+\mu) h(\id + \mu h)^{-1} \\
			&= (\id+ h\mu)^{-1}(h+h\mu h)(\id + \mu h)^{-1} \\
			&= (\id+ h\mu)^{-2}(h+h\mu h) \\
			%&=(\id+ h\mu)^{-1}h \\
			&= h_\mu,
		\end{align*}
		Finally, note that
		\[
		[\mu, r_\mu] = [d_\mu, r_\mu] = [d_\mu, d_\mu^2]=0.
		\]
		It then follows from $[h, r_\mu]=0$ that $[h_\mu, r_\mu]=0$.
	\end{proof}

	%\[
	%\delta_\mu \coloneqq \delta + p(\id + \mu h)^{-1}\mu i = \delta + \id - p \mu h \mu i + p \mu h \mu h \mu i - \cdots.
	%\]
	
	\begin{prop}%[Homological perturbation lemma]
		Let $h$ and $\mu$ be as in Lemma \ref{sdrh}, and set
		\begin{align*}
			i_\mu \coloneqq (\id + h\mu)^{-1} i, 
			\qquad
			p_\mu \coloneqq p(\id + \mu h)^{-1},
			\qquad 
			\delta_\mu \coloneqq \delta + p_\mu \mu i.
		\end{align*}
		Then the triple $(p_\mu,i_\mu,h_\mu)$ defines a strong deformation retract of curved complexes from $(V,d_\mu)$ to $(H,\delta_\mu)$.
	\end{prop}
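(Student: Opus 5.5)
The plan is to verify each clause of the definition of a strong deformation retract directly, using only the algebraic identities for $(\id+h\mu)^{-1}$ and $(\id+\mu h)^{-1}$ recorded above, together with Lemma \ref{sdrh}. Compatibility with filtrations is automatic: $\mu$ raises the filtration and the series converge by completeness. Lemma \ref{sdrh} already gives $h_\mu^2=0$. The remaining side conditions follow from the unperturbed ones. Since $h_\mu=h(\id+\mu h)^{-1}$, we have
\[
h_\mu i_\mu=(\id+h\mu)^{-1}h\,(\id+h\mu)^{-1}i=(\id+h\mu)^{-1}(\id+h\mu)^{-1}hi=0,
\]
where we use $h(\id+h\mu)^{-1}=(\id+h\mu)^{-1}h\cdot$ in the form of $h$ commuting past the series. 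More precisely, every term of $h(\id+h\mu)^{-1}i$ ends in $hi$ or in $h\mu\cdots hi$, so each term vanishes; in the same way $p_\mu h_\mu=0$ because every term begins with $ph$. For $p_\mu i_\mu=\id$, expand $p(\id+\mu h)^{-1}(\id+h\mu)^{-1}i$ using the product identity
\[
(\id+\mu h)^{-1}(\id+h\mu)^{-1}=(\id+\mu h)^{-1}+(\id+h\mu)^{-1}-\id.
\]
Every nonconstant term of $p(\id+\mu h)^{-1}i$ ends in $hi=0$, and every nonconstant term of $p(\id+h\mu)^{-1}i$ begins with $ph=0$, so the whole expression reduces to $pi=\id$.

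The main identity is $\id-i_\mu p_\mu=[d_\mu,h_\mu]$. Write $A=(\id+h\mu)^{-1}$ and $B=(\id+\mu h)^{-1}$. Then
\[
i_\mu p_\mu = A\,ip\,B = A(\id-[d,h])B .
\]
On the other side,
\[
[d_\mu,h_\mu]=(d+\mu)Ah+Ah(d+\mu),
\]
and we rewrite $\mu A=B\mu$ and $Ah=hB$. I expect the cleanest route to be checking $A^{-1}\bigl(\id-[d_\mu,h_\mu]\bigr)B^{-1}=ip$ after conjugation, though care is needed because $d$ does not commute with $A$. Concretely, one expands $A(dh+hd)B$ and compares it with $AB-\id+dAh\cdots$, using $A-\id=-h\mu A$ and $B-\id=-B\mu h$. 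This telescoping computation, the standard one from the flat perturbation lemma, is the step I expect to be the main obstacle. It does not use $d^2=0$, so it should transfer verbatim to the curved setting.

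Next, $i_\mu$ and $p_\mu$ must be morphisms of curved complexes for $\delta_\mu$. Their deviation from $i$ and $p$ raises the filtration, and $\delta_\mu-\delta=p_\mu\mu i$ does as well. Hence $p_\mu d_\mu-\delta_\mu p_\mu$ and $d_\mu i_\mu-i_\mu\delta_\mu$ differ from the unperturbed commutators by filtration-raising terms, so the weak morphism condition holds. Finally, $(H,\delta_\mu)$ must be a curved complex. Filtration compatibility is immediate, and $\delta_\mu^2$ raises the filtration because $\delta_\mu^2\equiv\delta^2=p\,d\,i\,p\,d\,i\equiv p\,d^2 i$ modulo $F^{+1}$, using $ip=\id-[d,h]$ and $ph=hi=0$.
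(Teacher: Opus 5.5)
Your proposal has a genuine gap at the main identity $\id - i_\mu p_\mu = [d_\mu,h_\mu]$. You leave it as a ``telescoping computation'' and argue that it transfers verbatim from the flat case because it ``does not use $d^2=0$''. That is false. Carry out the conjugation you suggest, which is also what the paper does, with $A=(\id+h\mu)^{-1}$ and $B=(\id+\mu h)^{-1}$. Since $h_\mu = Ah = hB$, you get $A^{-1}i_\mu p_\mu B^{-1} = ip = \id - dh - hd$ and
\[
A^{-1}[d_\mu,h_\mu]B^{-1} = (\id+h\mu)d_\mu h + h d_\mu(\id+\mu h) = (\id+h\mu)(\id+\mu h) - \id + dh + hd + h(d\mu+\mu d+\mu^2)h .
\]
Using only the unperturbed identity $\id - ip = [d,h]$, this gives exactly
\[
i_\mu p_\mu + [d_\mu,h_\mu] = \id + h_\mu\,(d\mu+\mu d+\mu^2)\,h_\mu = \id + h_\mu\,(r_\mu - r)\,h_\mu .
\]
In the flat lemma the error term vanishes because $d^2=(d+\mu)^2=0$ forces $d\mu+\mu d+\mu^2=0$, so the flat argument does use flatness. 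Here $d\mu+\mu d+\mu^2 = r_\mu - r$ need not vanish. The identity holds only because of the curvature hypotheses:
\[
h(r_\mu - r)h = [h,r_\mu - r]\,h + (r_\mu - r)h^2 = 0 .
\]
This uses $[h,r]=0$ (part of being a contraction), $[h,r_\mu]=0$ (the extra assumption carried over from Lemma \ref{sdrh}), and $h^2=0$. Your plan never invokes $[h,r_\mu]=0$ or $[h,r]=0$ at this step, and without them the statement fails. That is the missing idea.

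Two smaller points. Your side conditions come out right, but two justifications are off. First, $h$ does not commute with $(\id+h\mu)^{-1}$. What holds is $h(\id+h\mu)^{-1}=h$ and $(\id+\mu h)^{-1}h=h$, both from $h^2=0$. These give $h_\mu i_\mu=(\id+h\mu)^{-1}hi=0$ and $p_\mu h_\mu = ph(\id+\mu h)^{-1}=0$ directly. Second, the terms $h(h\mu)^n i$ vanish because they begin with $h^2$, not because they end in $hi$. Finally, checking that $p_\mu$, $i_\mu$ are morphisms modulo $F^{\bullet+1}$ suffices for the definition as stated. The paper proves the sharper identity $\delta_\mu = p_\mu d_\mu i_\mu$, which identifies $(H,\delta_\mu)$ with the retract canonically determined by the contraction $h_\mu$.
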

	
	\begin{proof}
		It suffices to show that $p_\mu$ and $i_\mu$ are the correct projection and inclusion and $\delta_\mu$ is the correct differential induced by them. We have
		\begin{align*}
			p_\mu i_\mu &= p(\id+\mu h)^{-1}(\id+ h \mu)^{-1} i \\
			& =p (\id+\mu h)^{-1}i + p(\id+ h \mu)^{-1}i  - pi\\
			&= \id,
		\end{align*}
		where we use $pi = \id$ and the side conditions $hi=ph=0$. On the other hand,
		\begin{align*}
			(\id+ h \mu)i_\mu p_\mu (\id+\mu h)= ip =\id- (dh+hd),
		\end{align*}
		and
		\begin{align*}
			(\id+ h \mu)(d_\mu h_\mu + h_\mu d_\mu) (\id+\mu h) &= (\id+ h \mu) d_\mu h + h d_\mu (\id+\mu h) \\
			&= (\id+ h \mu) (\id+\mu h) + dh+hd - \id,
		\end{align*}
		where we use $d \mu + \mu d + \mu^2 = r_\mu -r$ and $h (r_\mu-r) h = [h,r_\mu-r]h=0$. Since $\id+\mu h$ and $\id + h\mu$ are invertible, it follows that
		\[
		i_\mu p_\mu + d_\mu h_\mu + h_\mu d_\mu = \id.
		\]
		
		Finally, we compute that
		\[
		(\id+\mu h)d(\id+h\mu) = d + \mu h d + d h \mu + \mu h \mu,
		\]
		and
		\begin{align*}
			&(\id+\mu h)^{-1}d_\mu(\id+h\mu)^{-1} = d - (\id+\mu h)^{-1}(\mu h d + d h \mu + \mu h \mu -\mu)(\id+h\mu)^{-1} \\
			&= d - (\id+\mu h)^{-1}(\mu h d + d h \mu + 2 \mu h \mu)(\id+h\mu)^{-1} + (\id+\mu h)^{-1}\mu.
		\end{align*}
		Thus, 
		\[
		\delta_\mu  - p_\mu d_\mu i_\mu =p_\mu (\mu h d + d h \mu + 2 \mu h \mu)i_\mu.
		\]
		
		Note that
		\begin{align*}
			\mu h d + d h \mu &= \mu [d,h] + [d, h]\mu - \mu d h - h d \mu \\
			&=2 \mu - \mu ip - ip \mu -\mu d h - h d \mu,
		\end{align*}
		where we use $\id-ip =[d,h]$, and
		\[
		p_\mu h= 0, \qquad h i_\mu= 0, \qquad p_\mu i = p i_\mu  =  \id,
		\]
		where we use the side conditions $h^2 = ph = hi =0$. Therefore,
		\[
		p_\mu (\mu h d + d h \mu + 2 \mu h \mu )i_\mu = 2 p_\mu \mu i_\mu - p \mu i_\mu - p_\mu \mu i + 2 p_\mu \mu h \mu i_\mu = 0,
		\]
		where we use $p_\mu = p - p_\mu \mu h$ and $i_\mu = i - h \mu i_\mu$.
	\end{proof}
	
	In particular, one may consider perturbations $\mu$ such that $r_\mu=0$, that is, such that $d_\mu$ (and hence $\delta_\mu$) is a differential. In this case, the conditions $[d,r_\mu]=[h,r_\mu]=0$ on $\mu$ are trivially satisfied.

	\section{Homotopy transfer of curved \texorpdfstring{$L_\infty$}{L-infinity} algebras}\label{sec:htt}
	
	Let $\fg$ be a curved $L_\infty$ algebra over $R=(\uR, D_R)$. 
	%Then $(\fg, l_1)$ is a curved complex with $R$-linear curvature
	%\[
	%r\coloneqq-l_2(l_0, \cdot).
	%\]
	\begin{defn}
		A \emph{contraction} of $\fg$ is an $\uR$-linear contraction $h$ of the curved complex $(\fg[1], l_1)$ such that
		\[
		h(l_0)=0.
		\]
	\end{defn}
	
	Let $h$ be a contraction of $\fg$. Let $L_h$ denote the degree $-1$ $\uR$-linear coderivation on $\wSym_\uR(\fg[1])$ extending $h\colon \fg[1] \to \fg[1]$. Explicitly, $L_h$ is given by
	\[
	L_h(x_1 \odot \cdots \odot x_n)
	=
	\sum_{k=1}^n (-1)^{\sum_{i=1}^{k-1}|x_i|}\,
	x_1 \odot \cdots \odot h(x_k)\odot \cdots \odot x_n.
	\]
	Consider the operator $\bfd$ on $\wSym_\uR(\fg[1])$ defined by
	\begin{align*}
		&\bfd(r)= D_R(r) + (-1)^{|r|}rl_0, \\
		&\bfd (x_1 \odot \cdots \odot x_n)
		= l_0 \odot x_1 \odot \cdots \odot x_n + \sum_{k=1}^n (-1)^{\sum_{i=1}^{k-1}|x_i|}\,
		x_1 \odot \cdots \odot l_1(x_k)\odot \cdots \odot x_n.
	\end{align*}
	Equipped with $\bfd$, $\wSym_\uR(\fg[1])$ becomes a curved complex with $\uR$-linear curvature
	\[
	\bfr \coloneqq L_{r},
	\]
	where $r \coloneqq -l_2(l_0, \cdot)$ is the $\uR$-linear curvature of $(\fg,l_1)$.
	
	\begin{lem}
		$L_h$ is an $\uR$-linear gauge of the curved complex $(\wSym_\uR(\fg[1]), \bfd)$.
	\end{lem}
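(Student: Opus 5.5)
We check the defining conditions of a gauge for $g=L_h$ on the curved complex $(\wSym_\uR(\fg[1]),\bfd)$ in turn: $\uR$-linearity and compatibility with the filtration, $L_h^2=0$, $[L_h,\bfr]=0$, and semisimplicity of $[\bfd,L_h]$. Linearity and filtration compatibility are immediate. $L_h$ is the coderivation extending the $\uR$-linear, filtration-preserving map $h$. It acts factorwise, so it preserves each $\Sym^n_\uR$, and with it the convolution filtration and the completion.

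For $L_h^2=0$ we use that $L_h$ has odd degree, so $L_h^2=\tfrac12[L_h,L_h]$ is again a coderivation. It is the coderivation extending $h^2=0$, so it vanishes. The same principle gives $[L_h,\bfr]=[L_h,L_r]=L_{[h,r]}=0$, because the commutator of two coderivations induced by linear maps is the coderivation induced by their commutator, and $[h,r]=0$ by assumption.

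The main step is semisimplicity of $[\bfd,L_h]$. Split $\bfd=\bfd_0+\bfd_1$, where $\bfd_1=L_{l_1}$ applies $l_1$ factorwise (extended by $D_R$ on $\Sym^0$) and $\bfd_0$ is multiplication by $l_0$. First, $[L_{l_1},L_h]=L_{[l_1,h]}$: although $l_1$ is only $\R$-linear, the Leibniz terms involving $D_R$ cancel in the graded commutator because $h$ is $\uR$-linear, so $[l_1,h]$ is an $\uR$-linear map. Second, $[\bfd_0,L_h]$ is multiplication by $h(l_0)$, up to sign, since $L_h$ is a derivation of the symmetric product. This vanishes because $h(l_0)=0$. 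Hence
\[
[\bfd,L_h]=L_{\pi},\qquad \pi\coloneqq[l_1,h].
\]
The computation just before the definition of a strong deformation retract shows that $\pi^2=\pi$, so $\pi$ is an $\uR$-linear idempotent and $\fg[1]=\ker\pi\oplus\operatorname{im}\pi$. The derivation $L_\pi$ acts on $\Sym^n_\uR$ of this splitting by counting factors: on the summand $\Sym^{a}(\ker\pi)\odot\Sym^{b}(\operatorname{im}\pi)$ with $a+b=n$ it is multiplication by $b$. So $\wSym_\uR(\fg[1])$ decomposes, degree by degree in symmetric powers, into eigenspaces with nonnegative integer eigenvalues. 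This decomposition is compatible with the filtration and completion, because the eigenprojections are built from $\pi$ and $\id-\pi$ and therefore preserve $F^\bullet$. Hence $[\bfd,L_h]$ is semisimple.

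I expect the main obstacle to be the sign bookkeeping in $[\bfd_0,L_h]$ and $[L_{l_1},L_h]$, together with the $D_R$ term on $\Sym^0$. I would handle these by checking each identity on generators $r\,x_1\odot\cdots\odot x_n$ and invoking that both sides are (co)derivations.
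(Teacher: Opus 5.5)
Your proposal is correct and follows the paper's proof. The paper checks the same three points: $L_h^2=\tfrac12 L_{[h,h]}=0$, $[L_h,\bfr]=L_{[h,r]}=0$, and $[\bfd,L_h]=L_{[l_1,h]}$ via $h(l_0)=0$, with semisimplicity coming from the idempotent $[l_1,h]$. Your eigenspace decomposition, with eigenvalue $b$ on $\Sym^a(\ker\pi)\odot\Sym^b(\operatorname{im}\pi)$ extended to the completion by filtration-compatibility, simply spells out the step the paper states in one line.
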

	\begin{proof}
		We have $L_h^2 = \frac{1}{2}[L_h,L_h] = \frac{1}{2}L_{[h,h]} = 0$, and $[L_h, \bfr]=L_{[h,r]}=0$. The semi-simplicity of $[\bfd, L_h]$ follows from the semi-simplicity of $[l_1,h]$ and the computation
	\begin{equation}\label{ll1h}
		[\bfd, L_h] =  L_{[l_1,h]}, 
	\end{equation}
	where we use the assumption $h(l_0)=0$.
	\end{proof}
	
	Let $\fh[1] \coloneqq \ker [l_1,h]$. Let $p\colon \fg[1] \rightarrow \fh[1]$ and $i\colon \fh[1] \rightarrow \fg[1]$ denote the projection and inclusion, respectively. Let 
	\[
	\bfp\colon  \wSym_\uR(\fg[1]) \rightarrow  \wSym_\uR(\fh[1]), \qquad \bfi\colon \wSym_\uR(\fh[1]) \rightarrow   \wSym_\uR(\fg[1])
	\]
	denote the projection and inclusion of filtered graded cocommutative coalgebras over $\uR$ induced by $p$ and $i$. 
	
	Let $G$ denote Green's operator of $[\bfd, L_h]$. 
	Berglund derived an explicit formula for $G$, which he formulated in terms of symmetric thick maps \cite{berglund2014homological}.
	\begin{lem}\label{berg}
		For $n>0$,
		\[
		G\big|_{\Sym_\uR^n(\fg[1])} =\frac{1}{n} \sum_{\epsilon \in \{0,1\}^n} \binom{n-1}{|\epsilon|}^{-1} (ip)^{\epsilon_1} \otimes \cdots \otimes (ip)^{\epsilon_n},
		\]
		with the convention $\binom{n-1}{n}^{-1}=0$.
	\end{lem}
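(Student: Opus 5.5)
The plan is a direct eigenspace computation. By \eqref{ll1h}, $[\bfd, L_h] = L_{[l_1,h]}$. Since $h$ is a contraction, $[l_1,h]$ is idempotent, and with $\fh[1] = \ker [l_1,h]$ the strong deformation retract of Appendix~\ref{sec:hpt} gives
\[
[l_1,h] = \id - ip .
\]
Set $K \coloneqq \operatorname{im}(\id - ip)$, so that $\fg[1] = i(\fh[1]) \oplus K$. Then $L_{\id - ip}$ is the coderivation extending a projection. On the span of monomials $x_1 \odot \cdots \odot x_n$ with exactly $k$ factors in $K$ and $m = n-k$ factors in $i(\fh[1])$, it acts by multiplication by $k$. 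So $\Sym^n_\uR(\fg[1])$ splits $\uR$-linearly as $\bigoplus_{k=0}^n V^{(n)}_k$ with $[\bfd,L_h] = k$ on $V^{(n)}_k$. Hence $G = k^{-1}$ on $V^{(n)}_k$ for $k \ge 1$ and $G = 0$ on $V^{(n)}_0$. This description is compatible with completion, since all operators are $\uR$-linear and preserve symmetric degree and filtration.

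Next I evaluate the right-hand side of Lemma~\ref{berg} on $V^{(n)}_k$. The operator $T_n \coloneqq \frac{1}{n}\sum_\epsilon \binom{n-1}{|\epsilon|}^{-1}(ip)^{\epsilon_1}\otimes\cdots\otimes(ip)^{\epsilon_n}$ has coefficients depending only on $|\epsilon|$. It therefore commutes with the (Koszul-signed) symmetric group action, so it descends to $\Sym^n$ and can be computed on a representative tensor. Take a tensor whose first $m$ slots lie in $i(\fh[1])$ and whose last $k$ slots lie in $K$. The map $ip$ is the identity on the first summand and zero on $K$. So the term indexed by $\epsilon$ survives, acting as the identity, exactly when $\epsilon$ is supported in the first $m$ slots. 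Thus $T_n$ acts on $V^{(n)}_k$ by the scalar
\[
c_{n,k} = \frac{1}{n}\sum_{j=0}^{m}\binom{m}{j}\binom{n-1}{j}^{-1}.
\]

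The combinatorial step, which I expect to be routine, is to show $c_{n,k} = 1/k$ for $k \ge 1$. A factorial check gives
\[
\binom{m}{j}\binom{n-1}{j}^{-1} = \binom{n-1-j}{k-1}\binom{n-1}{k-1}^{-1}.
\]
Summing over $j$ with the hockey-stick identity gives
\[
\sum_{j=0}^{m}\binom{n-1-j}{k-1} = \sum_{t=k-1}^{n-1}\binom{t}{k-1} = \binom{n}{k}.
\]
Hence $c_{n,k} = \frac{1}{n}\binom{n}{k}\binom{n-1}{k-1}^{-1} = \frac{1}{k}$, which matches $G$ off the kernel.

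The main obstacle is the kernel $V^{(n)}_0 = \Sym^n(i\fh[1])$. There every term acts as the identity, and the convention $\binom{n-1}{n}^{-1} = 0$ removes only the $j = n$ term. So $T_n$ acts by $\frac{1}{n}\sum_{j<n}\frac{n}{n-j} = H_n$ (the harmonic number) rather than by $0$. I would resolve this in one of two ways. The first is to read the formula as an identity for $G$ composed with $L_h$, or with $[\bfd,L_h]$. This is how $G$ enters the harmonization $h(L_h) = L_h G$, and $L_h$ kills $V^{(n)}_0$ because $hi = 0$, so there the discrepancy is invisible. The second is to check Berglund's normalization in \cite{berglund2014homological} to see whether the sum is meant to run over $\epsilon \neq (1,\dots,1)$ and $G$ to be restricted to the image of $[\bfd,L_h]$. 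I would state the lemma in whichever of these forms is actually used downstream.
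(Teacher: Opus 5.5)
Your proposal is correct, and it catches a real defect in the statement. As literally stated, the lemma is false on $\Sym^n_\uR(i\fh[1])$.

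\textbf{Why the statement fails on the kernel.} Already for $n=1$ the right-hand side is $\id$, since the $\epsilon=1$ term has coefficient $\binom{0}{1}^{-1}=0$. On the other hand, $[\bfd,L_h]|_{\fg[1]}=\id-ip$ forces $G|_{\fg[1]}=\id-ip$. This is not vacuous: in the application $\fh\neq 0$ (for $\fg\neq 0$), because $W_n$ contains the $0$-forms $t_i$.

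The paper's proof misses this because it only checks $[\bfd,L_h]\,T_n=\id^{\otimes n}-(ip)^{\otimes n}$, with your $T_n$ denoting the right-hand side. That identity determines $T_n$ only modulo operators with values in $\ker[\bfd,L_h]|_{\Sym^n}=\Sym^n_\uR(i\fh[1])$. The additional condition $T_n(ip)^{\otimes n}=0$, needed because $G$ vanishes on the kernel, fails: $T_n(ip)^{\otimes n}=H_n\,(ip)^{\otimes n}$, with $H_n=\sum_{j=1}^n 1/j$. The exact statement is
\[
G\big|_{\Sym^n_\uR(\fg[1])}=T_n\bigl(\id^{\otimes n}-(ip)^{\otimes n}\bigr)=T_n-H_n\,(ip)^{\otimes n}.
\]

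\textbf{Your two proposed fixes.} Your first resolution is the right one. Your second does not help: excluding $\epsilon=(1,\dots,1)$ is exactly what the convention $\binom{n-1}{n}^{-1}=0$ already does. Nothing downstream is affected, because every later use goes through $\bfh=L_hG=GL_h$ or through $[\bfd,L_h]G$. The operator $L_h$ kills $\Sym^n_\uR(i\fh[1])$ (as $hi=0$) and has image in $\bigoplus_{k\ge 1}V^{(n)}_k$ (as $ph=0$). So the kernel discrepancy drops out.

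\textbf{Comparison of routes.} The paper never decomposes $\Sym^n$. It multiplies $\sum_{i+1+j=n}\id^{\otimes i}\otimes(\id-ip)\otimes\id^{\otimes j}$ into $T_n$. It then rewrites the product as $\sum_\epsilon c_{|\epsilon|}\,(ip)^{\epsilon_1}\otimes\cdots\otimes(ip)^{\epsilon_n}$. All coefficients with $0<|\epsilon|<n$ vanish by the single identity $(n-|\epsilon|)\binom{n-1}{|\epsilon|}^{-1}=|\epsilon|\binom{n-1}{|\epsilon|-1}^{-1}$.

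That operator-level check needs no explicit complement and no hockey-stick sum. However, $L_{\id-ip}$ annihilates $\Sym^n_\uR(i\fh[1])$, so the check is blind to precisely the summand where the formula breaks. Your diagonalization costs one summation identity. In exchange it gives the eigenvalue $1/k$ on each $V^{(n)}_k$ directly, and that is what exposes the value $H_n$ on $V^{(n)}_0$.
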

	\begin{proof}
	We need to show that
	\[
	[\bfd,L_h]G\big|_{\Sym_\uR^n(\fg[1])} =\id^{\otimes n}-(ip)^{\otimes n}.
	\]
	Using \eqref{ll1h} and the identity $[l_1,h]=\id-ip$, this is equivalent to proving
	\[
	\left(\sum_{i+1+j=n}
	\id^{\otimes i}\otimes(\id-ip)\otimes\id^{\otimes j}\right)
	\left(
	\frac{1}{n}
	\sum_{\epsilon\in\{0,1\}^n}
	\binom{n-1}{|\epsilon|}^{-1}
	(ip)^{\epsilon_1}\otimes\cdots\otimes(ip)^{\epsilon_n}
	\right)
	=
	\id^{\otimes n}-(ip)^{\otimes n}.
	\]
	
	The left-hand side can be simplified as 
	\begin{align*}
		\mathrm{LHS}
		&=
		\frac1n
		\sum_{j=1}^n
		\sum_{\substack{\epsilon\in\{0,1\}^n \\ \epsilon_j=0}}
		\binom{n-1}{|\epsilon|}^{-1}
		(ip)^{\epsilon_1}\otimes\cdots\otimes(\id-ip)\otimes\cdots\otimes(ip)^{\epsilon_n}\\
		&=
		\frac1n
		\sum_{j=1}^n
		\sum_{\epsilon\in\{0,1\}^n}
		(-1)^{\epsilon_j}
		\binom{n-1}{|\epsilon|-\epsilon_j}^{-1}
		(ip)^{\epsilon_1}\otimes\cdots\otimes(ip)^{\epsilon_n}\\
		&=
		\sum_{\epsilon\in\{0,1\}^n}
		c_\epsilon\,
		(ip)^{\epsilon_1}\otimes\cdots\otimes(ip)^{\epsilon_n},
	\end{align*}
	where
	\[
	c_\epsilon
	\coloneqq
	\frac1n
	\sum_{j=1}^n
	(-1)^{\epsilon_j}
	\binom{n-1}{|\epsilon|-\epsilon_j}^{-1}.
	\]
	Since there are $n-|\epsilon|$ indices with $\epsilon_j=0$ and $|\epsilon|$ indices with $\epsilon_j=1$,
	\[
	c_\epsilon
	=
	\frac1n
	\left(
	\frac{n-|\epsilon|}{\binom{n-1}{|\epsilon|}}
	-
	\frac{|\epsilon|}{\binom{n-1}{|\epsilon|-1}}
	\right).
	\]
	Thus $c_\epsilon$ depends only on $|\epsilon|$; we denote it by $c_{|\epsilon|}$.
	
	For \(0<|\epsilon|<n\),
	\[
	\frac{n-|\epsilon|}{\binom{n-1}{|\epsilon|}}
	=
	\frac{|\epsilon|!(n-|\epsilon|)!}{(n-1)!}
	=
	\frac{|\epsilon|}{\binom{n-1}{|\epsilon|-1}},
	\]
	and hence $c_{|\epsilon|}=0$. For the remaining cases, we have
	\[
	c_0=\binom{n-1}{0}^{-1}=1,
	\qquad
	c_n=-\binom{n-1}{n-1}^{-1}=-1,
	\]
	which completes the proof.
	\end{proof}
	
	Let $\bfh$ be the harmonization of $L_h$. That is,
	\begin{equation}\label{bfh}
		\bfh = L_h G = G L_h.
	\end{equation}
	Comparing with Proposition 5.1 of \cite{berglund2014homological}, one sees immediately that \eqref{bfh} coincides with the symmetrized tensor trick homotopy constructed by Berglund.\footnote{In his notation, $\bfh^\Sigma$ corresponds to our $\bfh$, $\bfq$ corresponds to $G$, and $\bfh^{\mathrm{der}}$ corresponds to $L_h$.}

	Now consider the operator $\bfdelta$ on $\wSym_\uR(\fh[1])$ defined by
	\begin{align*}
		&\bfdelta(r)= D_R(r) + (-1)^{|r|}r p(l_0), \\
		&\bfdelta (x_1 \odot \cdots \odot x_n)
		= p(l_0) \odot x_1 \odot \cdots \odot x_n + \sum_{k=1}^n (-1)^{\sum_{i=1}^{k-1}|x_i|}\,
		x_1 \odot \cdots \odot pl_1i(x_k)\odot \cdots \odot x_n.
	\end{align*}
	Likewise, equipped with $\bfdelta$, $\wSym_\uR(\fh[1])$ becomes a curved complex with curvature $\bfp \bfr \bfi$. Indeed, this follows from the computation
	\[
	pl_1 ip l_1 i = p l_1(\id - [l_1, h])l_1 i = pri - p(rhl_1+l_1hr)i = pri - pr(\id-ip)i = pri,
	\]
	where we use $[l_1, r]=[h,r]=0$.
	
	\begin{lem}
		The triple $(\bfp, \bfi, \bfh)$ defines a strong deformation retract of curved complexes from $(\wSym_\uR(\fg[1]), \bfd)$ to $(\wSym_\uR(\fh[1]), \bfdelta)$.
	\end{lem}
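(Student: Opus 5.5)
The plan is to apply the general recipe from Appendix \ref{sec:hpt}: a contraction determines a strong deformation retract onto the kernel of the associated idempotent, with the inclusion, projection and induced differential. So the work splits into three checks. First, $\bfh$ is a contraction of $(\wSym_\uR(\fg[1]),\bfd)$. Second, $\ker[\bfd,\bfh]$ is the image of $\bfi$, and the corresponding projection is $\bfp$. Third, the induced differential $\bfp\bfd\bfi$ equals $\bfdelta$.

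For the first check, $L_h$ is an $\uR$-linear gauge by the preceding lemma. Its harmonization $\bfh=L_hG$ is therefore an idempotent gauge satisfying $\bfh\bfd\bfh=\bfh$ and $\bfh^2=0$, by the general computation in Appendix \ref{sec:hpt}. We also need $[\bfh,\bfr]=0$. This follows from $[L_h,\bfr]=0$ together with $[G,\bfr]=0$. The latter holds because $\bfr=L_r$ commutes with $[\bfd,L_h]=L_{[l_1,h]}$: indeed $[r,[l_1,h]]=0$, using $[r,l_1]=0$ (since $r=l_1^2$) and $[r,h]=0$. Finally, $\bfh$ is $\uR$-linear and filtration compatible. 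The filtration claim follows because $G$ is given by Berglund's formula in Lemma \ref{berg}, which preserves the filtration and each $\Sym^n$.

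For the second check, note $[\bfd,\bfh]=[\bfd,L_h]G$. By Lemma \ref{berg}, on $\Sym^n_\uR(\fg[1])$ this operator equals $\id^{\otimes n}-(ip)^{\otimes n}$, and on $\Sym^0=\uR$ it is $0$. Hence
\[
\id-[\bfd,\bfh]=\bigoplus_n (ip)^{\otimes n}=\bfi\bfp,
\]
and $\bfp\bfi=\id$ because $pi=\id$. This identifies $\ker[\bfd,\bfh]$ with $\wSym_\uR(\fh[1])$ via $\bfi$, with projection $\bfp$. The side conditions $\bfh^2=\bfp\bfh=\bfh\bfi=0$ then follow formally, as in the discussion after the definition of strong deformation retracts. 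One can also see them directly, since $L_h$ kills $\operatorname{im}(ip)^{\otimes n}$ because $hi=0$, and $pL_h=0$ because $ph=0$. Both the sum and the projections are compatible with completion, because all operators preserve the symmetric-power grading and the filtration.

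For the third check, compute $\bfp\bfd\bfi$. On $x_1\odot\cdots\odot x_n$ with $x_k\in\fh[1]$, the term $l_0\odot x_1\odot\cdots$ is sent by $\bfp$ to $p(l_0)\odot x_1\odot\cdots$. The derivation terms are sent to $x_1\odot\cdots\odot pl_1i(x_k)\odot\cdots$. On $\uR$ one gets $D_R(r)+(-1)^{|r|}rp(l_0)$. This is exactly $\bfdelta$, and we already verified that $(\wSym_\uR(\fh[1]),\bfdelta)$ is a curved complex with curvature $\bfp\bfr\bfi$. It remains to confirm that $\bfi$ and $\bfp$ are morphisms of curved complexes. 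The identities $\bfd\bfi-\bfi\bfdelta=[\bfd,\bfh]\bfd\bfi$ and its dual are exact up to terms involving $l_0\in F^1$ and the curvature, and these raise the filtration.

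I expect the main obstacle to be this last point together with $[G,\bfr]=0$. In the curved setting, $\bfi$ need not intertwine the differentials exactly: $\bfd\bfi-\bfi\bfdelta$ equals $(\id-\bfi\bfp)\bfd\bfi$. Using $\id-\bfi\bfp=[\bfd,\bfh]$ and $\bfh\bfi=0$, this becomes $\bfh\bfd^2\bfi=\bfh\bfr\bfi=\bfr\bfh\bfi=0$. So $\bfi$ is in fact strict, and dually for $\bfp$, provided $[\bfh,\bfr]=0$ is established carefully. I will first prove $[\bfh,\bfr]=0$ via the semisimple eigenspace decomposition, on which $\bfr$ acts preserving eigenspaces.
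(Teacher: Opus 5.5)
Your proposal is correct and follows the paper's route: you reduce to the general fact that a contraction determines a strong deformation retract onto $\ker[\bfd,\bfh]$, read off $\bfp\bfi=\id$ and $\bfdelta=\bfp\bfd\bfi$ from $pi=\id$, and obtain $\id-\bfi\bfp=[\bfd,L_h]G=[\bfd,\bfh]$ from Lemma \ref{berg}. Your extra checks fill in what the paper leaves to the general discussion in Appendix \ref{sec:hpt}: that $[\bfh,\bfr]=0$ via $[\bfr,L_{[l_1,h]}]=0$, and that $\bfi,\bfp$ are strict chain maps. For the latter, you silently drop the term $\bfd\bfh\bfd\bfi$; it vanishes because $\bfh\bfd\bfi=-\bfd\bfh\bfi=0$, which follows from $[\bfd,\bfh]\bfi=0$.
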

	\begin{proof}
		It suffices to verify that $\bfp$ and $\bfi$ are the correct projection and inclusion induced by $\bfh$, and that $\bfdelta$ is the differential induced by the data. The identities $\id - pi =0$ and $\delta = p d i$ immediately imply that $\id - \bfp \bfi = 0$ and $\bfdelta = \bfp \bfd \bfi$. The identity $\id - \bfi\bfp = [\bfd, \bfh]$ follows from Lemma \ref{berg}. Indeed,
		\[
		[\bfd, \bfh] = [\bfd, L_h] G = \id - \bfi\bfp,
		\]
		where we use $[\bfd, G]=0$ and $[\bfh, G]=0$.
	\end{proof}
	
	Now define
	\[
	\mu \coloneqq D_\fg - \bfd.
	\]
	$\mu$ is explicitly given by
	\begin{align*}
		\mu(x_1 \odot \cdots \odot x_n)=
		\sum_{\sigma \in \Sigma_n} \sum_{k=2}^n 
		\frac{\epsilon(\sigma)}{k!(n-k)!}
		l_k(x_{\sigma(1)},\dots,x_{\sigma(k)})
		\odot x_{\sigma(k+1)} \odot \cdots \odot x_{\sigma(n)}.
  	\end{align*}
  	Note that $\mu$ increases the filtration degree. Thus, it defines an $\uR$-linear perturbation of $\bfd$. 
	%The perturbed differential is the Chevalley--Eilenberg differential of $\fg$.
	The corresponding perturbed projection, inclusion, and contraction are given by
	\begin{align*}
		\bfp_\mu \coloneqq \bfp\,(\id + \mu \bfh)^{-1}, \quad
		\bfi_\mu \coloneqq (\id + \bfh \mu)^{-1}\bfi, \quad
		\bfh_\mu \coloneqq (\id + \bfh\mu)^{-1}\bfh.
	\end{align*}
	%\begin{rmk}
		%$\Gr\, \bfp_\mu =\Gr\, \bfp$
	%\end{rmk}
	By homological perturbation theory,	the triple $(\bfp_\mu, \bfi_\mu, \bfh_\mu)$ defines a strong deformation retract of curved complexes from $(\wSym_\uR(\fg[1]), D_\fg)$ to $(\wSym_\uR(\fh[1]), D_\fh)$, where
	\[
	D_\fh \coloneqq \bfdelta + \bfp_\mu \mu \bfi.
	\]
	
	\begin{lem}\label{bergetz}
		The filtration-preserving $\uR$-linear maps
		\[		
		\bfp_\mu\colon\wSym_\uR(\fg[1]) \longrightarrow \wSym_\uR(\fh[1])
		\qquad
		\bfi_\mu\colon\wSym_\uR(\fh[1]) \longrightarrow \wSym_\uR(\fg[1])
		\]
		are morphisms of filtered graded cocommutative coalgebras over $\uR$. Moreover, the differential $D_\fh$ is a coderivation of $\wSym_\uR(\fh[1])$ compatible with $D_R$.
	\end{lem}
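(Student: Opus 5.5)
The plan is to follow Berglund's approach for the tensor trick and Getzler's coalgebra argument, adapted to the filtered, curved setting over $\uR$. The basic structural input is about $\bfh = L_h G$. Since $\bfh$ is built from a coderivation $L_h$ and the operator $G$ of Lemma \ref{berg}, it is a \emph{symmetric coalgebra homotopy} in Berglund's sense: it satisfies
\[
\Delta \bfh = (\bfh \otimes \bfi\bfp + \id \otimes \bfh)\Delta
\]
after symmetrization, or equivalently it is expressed by the averaged formula $\Delta\bfh = \tfrac12(\bfh\otimes(\id+\bfi\bfp) + (\id+\bfi\bfp)\otimes\bfh)\Delta$. I would verify this identity directly from Lemma \ref{berg}. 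Restrict to $\Sym^n_\uR$ and compare the coefficients $\frac1n\binom{n-1}{|\epsilon|}^{-1}$ on both sides after splitting $n=a+b$. This is exactly the combinatorics of Berglund's Proposition 5.1, and it uses only $\uR$-linearity, so it transfers verbatim. I would also record that $\bfi$ and $\bfp$ are coalgebra maps, which is immediate, and that $\epsilon\circ\bfh = 0$.

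Next I would show that $\bfi_\mu$ is a coalgebra morphism. Write $\bfi_\mu = \bfi - \bfh\mu\bfi_\mu$, and set
\[
E \coloneqq \Delta\bfi_\mu - (\bfi_\mu\otimes\bfi_\mu)\Delta .
\]
Using that $\mu$ is a coderivation (it is the difference of the coderivations $D_\fg$ and $\bfd$), the homotopy identity for $\bfh$, and the fixed-point equation, I would derive a recursion of the form $E = -T(E)$. Here $T$ is built from $\bfh\otimes(\cdot)$, $(\cdot)\otimes\bfh$ and $\mu$, so $T$ strictly raises the filtration. Completeness and separatedness of the filtration on $\wSym_\uR(\fg[1])\wotimes\wSym_\uR(\fg[1])$ then force $E=0$. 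The argument for $\bfp_\mu = \bfp - \bfp_\mu\mu\bfh$ is symmetric: the relevant error term satisfies a recursion whose operator involves $\mu\bfh$, and this recursion again converges to zero. Counit compatibility follows in both cases from $\epsilon\bfh=0$ together with the fact that $\mu$ preserves $\ker\epsilon$ up to the $l_0$ term; that term lies in $F^1$ and is handled by the same filtration argument.

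For the final claim, I would note that $D_\fh = \bfp_\mu D_\fg \bfi_\mu$. This follows from the homological perturbation proposition, whose proof shows $\delta_\mu = p_\mu d_\mu i_\mu$, applied here with $d_\mu = D_\fg$. Since $\bfi_\mu$ and $\bfp_\mu$ are coalgebra maps and $\bfp_\mu\bfi_\mu=\id$, one gets
\[
\Delta D_\fh = (\bfp_\mu\otimes\bfp_\mu)\Delta D_\fg\bfi_\mu = (\bfp_\mu D_\fg\bfi_\mu\otimes \bfp_\mu\bfi_\mu + \cdots)\Delta = (D_\fh\otimes\id+\id\otimes D_\fh)\Delta,
\]
so $D_\fh$ is a coderivation. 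Compatibility with $D_R$ follows because the only non-$\uR$-linear piece is $\bfdelta$, which satisfies the Leibniz rule by construction. The other pieces, $\bfp,\bfi,\bfh,\mu$, are all $\uR$-linear.

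I expect the main obstacle to be the coalgebra-homotopy identity for $\bfh$ in the second step. The naive tensor-trick homotopy is not symmetric and fails to be compatible with $\Delta$, so the precise weights in Lemma \ref{berg} have to be checked against the coproduct. My first attempt would be to reduce to the case $\fg[1]=\fh[1]\oplus K$ with $h$ supported on $K$. On a summand $\Sym^a\fh[1]\odot\Sym^b K$, the operator $G$ acts by $1/b$ when $b>0$. This makes $\bfh$ equal to $\frac1b L_h$ on that summand, and the coproduct identity then reduces to an elementary weighted-count check.
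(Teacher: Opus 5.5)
The gap is in your first step, and the rest of the argument depends on it. The identity $\Delta\bfh=\tfrac12\bigl(\bfh\otimes(\id+\bfi\bfp)+(\id+\bfi\bfp)\otimes\bfh\bigr)\Delta$ is false. So the recursion $E=-T(E)$ cannot be derived, and the weighted-count check you planned will not close.

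Your own reduction shows this. Write $\fg[1]=\fh[1]\oplus K$ with $K=\operatorname{im}[l_1,h]$, and let $V_b\subset\wSym_\uR(\fg[1])$ be the part with exactly $b$ factors in $K$. Then $\bfh=\frac1b L_h$ on $V_b$ for $b>0$, and $\bfh=0$ on $V_0$. Since $L_h$ is a coderivation preserving every $V_b$, on the $V_{b_1}\otimes V_{b_2}$-component of $\Delta(V_b)$ one has
\[
\Delta\bfh=\tfrac1b\,(L_h\otimes\id+\id\otimes L_h)\Delta=\Bigl(\tfrac{b_1}{b}\,\bfh\otimes\id+\tfrac{b_2}{b}\,\id\otimes\bfh\Bigr)\Delta .
\]
Your formula instead puts weights $\tfrac12,\tfrac12$ there whenever $b_1,b_2>0$, because $\bfi\bfp$ is the projection onto $V_0$. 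The two disagree as soon as $b_1\neq b_2$. For instance, take $v_i\in K$ with $u_i=h(v_i)\neq0$ and look at the $(1,2)$-component of $\Delta(v_1\odot u_2\odot u_3)$:
\begin{itemize}
\item the left side is $\frac13(u_1\otimes u_2\odot u_3+u_2\otimes u_1\odot u_3+u_3\otimes u_1\odot u_2)$;
\item your formula gives $\frac12 u_1\otimes u_2\odot u_3+\frac14(u_2\otimes u_1\odot u_3+u_3\otimes u_1\odot u_2)$.
\end{itemize}
The correct weights depend on $(b_1,b_2)$ jointly. This is exactly the sense in which the symmetrized homotopy is \emph{not} a coalgebra homotopy, so your fixed-point argument has nothing to run on.

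The paper never needs such an identity. For $\bfp_\mu$ it uses only the vanishing relations $(\bfx\otimes\bfy)\Delta\bfh=0$ for $\bfx,\bfy\in\{\bfp,\bfh\}$. These do hold, by $\bfh=\frac1bL_h$, $L_h^2=0$ and $\bfp L_h=0$. They give $(\bfp_\mu\otimes\bfp_\mu)\Delta\bfh=0$. Combined with $[D_\fg,\bfh]=\id-\bfi\bfp+[\mu,\bfh]$ and $\bfp_\mu D_\fg=D_\fh\bfp_\mu$, this yields $(\bfp_\mu\otimes\bfp_\mu)\Delta(\id+\mu\bfh)=\Delta\bfp$, which is the claim.

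For $\bfi_\mu$ the paper asserts $(\bfh\mu)^k\bfi\bfp=\frac1{k!}(L_h\mu)^k\bfi\bfp$ and then exponentiates the coderivation $[L_h,\mu]$. Do not rely on that identity: it already fails for $k=2$. Terms in which the second bracket absorbs the $h$-output of the first have a single $K$-factor, so $G$ weights them by $1$ rather than $\frac12$.

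A repair close to your own idea is to replace your identity by the correct one, $\Delta\bfh=\bfh_2\Delta$. Here $\bfh_2=(N\otimes\id+\id\otimes N)^{-1}(L_h\otimes\id+\id\otimes L_h)$, with $N=L_{\id-ip}$ and the inverse taken off $V_0\otimes V_0$. This $\bfh_2$ is the symmetrized homotopy of $h\oplus h$ on $\wSym_\uR(\fg[1])\wotimes_\uR\wSym_\uR(\fg[1])\cong\wSym_\uR(\fg[1]\oplus\fg[1])$. The argument then runs as follows.
\begin{itemize}
\item Use $\Delta\mu=\mu_2\Delta$ with $\mu_2=\mu\otimes\id+\id\otimes\mu$, and $\Delta\bfi=(\bfi\otimes\bfi)\Delta$. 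These give $\Delta\bfi_\mu=(\id+\bfh_2\mu_2)^{-1}(\bfi\otimes\bfi)\Delta$.
\item Show that $Y=\bfi_\mu\otimes\bfi_\mu$ satisfies $Y=\bfi\otimes\bfi-\bfh_2\mu_2Y$. Expand $Y$ using $\id=(\bfi\otimes\bfi)(\bfp\otimes\bfp)+[\bfd_2,\bfh_2]$, where $\bfd_2=\bfd\otimes\id+\id\otimes\bfd$.
\item In that expansion use $\bfp\bfi_\mu=\id$, $L_h\bfi_\mu=0$ (from $\bfh\bfi_\mu=0$), and $D_\fg\bfi_\mu=\bfi_\mu D_\fh$.
\end{itemize}
The dual argument handles $\bfp_\mu$. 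Your last step, that $D_\fh=\bfp_\mu D_\fg\bfi_\mu$ is a coderivation, is fine once both coalgebra statements are in hand. The paper instead writes $D_\fh=\bfdelta+\bfp_\mu\mu\bfi$, which needs only $\bfp_\mu$ to be a coalgebra map.
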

	The proof follows \cite{berglund2014homological}; see also Theorem 4.1 in \cite{getzler2025higherholonomycurvedlinftyalgebras}.
	\begin{proof}		
		Using the side conditions $h^2=ph = hi=0$, the fact that $L_h$ is a coderivation, and  \eqref{bfh}, one easily obtains
		\[
		(\bfh \otimes \bfh)\,\Delta \bfh = (\bfp \otimes \bfh)\,\Delta \bfh = (\bfh \otimes \bfp)\,\Delta \bfh = (\bfp \otimes \bfp)\,\Delta \bfh = 0.
		\]
		It follows that
		\begin{align*}
			(\bfp_\mu \otimes \bfp_\mu)\,\Delta \bfh = ((\bfp - \bfp_\mu \mu \bfh) \otimes (\bfp - \bfp_\mu \mu \bfh))\,\Delta \bfh = 0,
		\end{align*}
		and
		\[
		 (\bfp_\mu \otimes \bfp_\mu)\,\Delta [D_\fg, \bfh]=(\bfp_\mu \otimes \bfp_\mu)(\id \otimes D_\fg + D_\fg \otimes \id) \Delta  \bfh = (\id \otimes D_\fh + D_\fh \otimes \id) (\bfp_\mu \otimes \bfp_\mu) \Delta  \bfh = 0.
		\]
		On the other hand,
		\[
		[D_\fg, \bfh] = [\bfd + \mu, \bfh] = \id - \bfi \bfp + [\mu, \bfh].
		\]
		We then have
		\begin{align*}
			(\bfp_\mu \otimes \bfp_\mu) \Delta &= (\bfp_\mu \otimes \bfp_\mu) \Delta \left([D_\fg, \bfh]  + \bfi \bfp - [\mu, \bfh]\right) \\
			&= (\bfp_\mu \bfi \otimes \bfp_\mu \bfi) \Delta \bfp  - (\bfp_\mu \otimes \bfp_\mu) \Delta \mu \bfh \\
			&=\Delta \bfp  - (\bfp_\mu \otimes \bfp_\mu) \Delta \mu \bfh,
		\end{align*}
		where we use $\bfp_\mu \bfi =  \bfp \bfi =\id$. Since $\id + \mu \bfh$ is invertible, we obtain
		\[
		(\bfp_\mu \otimes \bfp_\mu)\,\Delta = \Delta \bfp_\mu.
		\]
		
	    Using Lemma \ref{berg} and the side conditions $ph = hi = 0$,  one has
		\begin{align*}
			\bfh \mu \bfi \bfp (x_1 \odot \cdots \odot x_n) &= \sum_{\sigma \in \Sigma_n} \sum_{k=2}^n \frac{\epsilon(\sigma)}{k!(n-k)!}
			\frac{1}{n-k+1} \sum_{\substack{\epsilon \in \{0,1\}^{n-k+1}\\ \epsilon_1 =0}} \binom{n-k}{|\epsilon|}^{-1} \\
			&
			hl_k(ip(x_{\sigma(1)}),\dots,ip(x_{\sigma(k)}))
			\odot ip(x_{\sigma(k+1)}) \odot \cdots \odot ip(x_{\sigma(n)}).
		\end{align*}
		Note that
		\[
		\frac{1}{m}
		\sum_{\substack{\epsilon\in\{0,1\}^{m}\\ \epsilon_1=0}}
		\binom{m-1}{|\epsilon|}^{-1}
		=
		\frac{1}{m}
		\sum_{r=0}^{m-1}
		\binom{m-1}{r}\binom{m-1}{r}^{-1}
		=
		\frac{1}{m}\sum_{r=0}^{m-1}1
		=1.
		\]
		Hence, $\bfh \mu \bfi \bfp  = L_h \mu \bfi \bfp$.
		
		For $k > 1$, a similar computation yields 
		\[
		(\bfh \mu)^k \bfi \bfp = \frac{1}{k!} (L_h \mu)^k\bfi \bfp = \frac{1}{k!}  [L_h, \mu]^k\bfi \bfp.
		\]
		Since $[L_h, \mu]$ is a coderivation, we have
		\[
		\Delta (\bfh \mu)^k \bfi \bfp = \frac{1}{k!} \sum_{p=0}^k \binom{k}{p}\left((L_h \mu)^p \otimes (L_h \mu)^{k-p} \right) \Delta \bfi \bfp = \sum_{p+q=k}\left((\bfh \mu)^p \otimes (\bfh\mu)^q \right) \Delta \bfi \bfp.
		\]
		Therefore,
		\begin{align*}
			\Delta\bfi_\mu
			&=\Delta(\id+\bfh\mu)^{-1} \bfi \bfp \bfi\\
			&= \left((\id+\bfh\mu)^{-1}\bfi \otimes (\id+\bfh\mu)^{-1}\bfi \right) \Delta  \bfp \bfi \\
			&= \left(\bfi_\mu \otimes \bfi_\mu \right) \Delta.
		\end{align*}
		
		$D_\fh = \bfdelta + \bfp_\mu \mu \bfi$ is a coderivation since both $\bfdelta$ and $\mu$ are coderivations, while $\bfp_{\mu}$ and $\bfi$ are coalgebra morphisms. It is compatible with $D_R$ because $\bfdelta$ is compatible with $D_R$, and the term $\bfp_{\mu} \mu \bfi$ is $\uR$-linear.
	\end{proof}
	
	\begin{prop}\label{thht}
		A contraction $h$ of $\fg$ induces a curved $L_\infty$ algebra structure on $\fh \subset \fg$ with curvature and unary bracket given by
		\[
		p(l_0), \qquad p l_1i,
		\]
		respectively, and morphisms of curved $L_\infty$ algebras
		\[
		\bfp_\mu\colon \fg \to \fh,
		\qquad
		\bfi_\mu\colon \fh \to \fg,
		\]
		satisfying
		\[
		(\bfp_\mu)_0 = 0,
		\qquad
		(\bfi_\mu)_0 = 0,
		\qquad
		(\bfp_\mu)_1 = p,
		\qquad
		(\bfi_\mu)_1 = i.
		\]
		In particular, $\bfp_\mu$ and $\bfi_\mu$ are weak equivalences of curved $L_\infty$ algebras.
	\end{prop}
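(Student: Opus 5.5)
Almost everything needed has already been assembled in Lemma \ref{bergetz}, so the plan is mainly to read off the Taylor coefficients of the coalgebra maps. By the homological perturbation proposition of Appendix \ref{sec:hpt}, $(\bfp_\mu,\bfi_\mu,\bfh_\mu)$ is a strong deformation retract from $(\wSym_\uR(\fg[1]),D_\fg)$ to $(\wSym_\uR(\fh[1]),D_\fh)$. In particular, the standard identities $D_\fh\bfp_\mu=\bfp_\mu D_\fg$ and $D_\fg\bfi_\mu=\bfi_\mu D_\fh$ hold. Since $D_\fg^2=0$ (so $r_\mu=0$), we also get $D_\fh^2=\bfp_\mu D_\fg^2\bfi_\mu=0$. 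By Lemma \ref{bergetz}, $D_\fh$ is a filtration-preserving coderivation compatible with $D_R$ and with the counit, and $\bfp_\mu,\bfi_\mu$ are filtered coalgebra morphisms.

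The first step is to use the full embedding $\CE$ to turn this into an $L_\infty$ structure. A degree $1$ coderivation $D_\fh$ of $\wSym_\uR(\fh[1])$ restricting to $D_R$ on $\uR$ determines brackets $l_n^\fh\coloneqq \pr_1\circ D_\fh|_{\Sym^n}$. Here $l_1^\fh$ satisfies the Leibniz rule over $D_R$, and $l_n^\fh$ for $n\ge2$ is $\uR$-multilinear. Because $\fh[1]$ is a direct summand of $\fg[1]$ cut out by the $\uR$-linear idempotent $ip$, it is again finitely generated projective, and its filtration is strict. The condition $D_\fh^2=0$ is then exactly the family of generalized Jacobi identities. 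The morphisms $\bfp_\mu$ and $\bfi_\mu$ have Taylor coefficients $\pr_1\circ\bfp_\mu|_{\Sym^n}$ and $\pr_1\circ\bfi_\mu|_{\Sym^n}$, and they intertwine the differentials by the identities above.

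The second step is to compute the low-order components. Applying $D_\fh$ to $1\in\Sym^0$ gives
\[
\bfdelta(1)+\bfp_\mu\mu\bfi(1)=p(l_0)+\bfp_\mu\mu(1).
\]
Since $\mu$ only involves $l_k$ with $k\ge2$, we have $\mu(1)=0$, so the curvature is $p(l_0)$. On $\Sym^1$, $\mu$ also vanishes, so $l_1^\fh=pl_1i$ on $\fh[1]$. For the morphisms, $\bfi_\mu(1)=(\id+\bfh\mu)^{-1}(1)=1$ because $\mu(1)=0$, so $(\bfi_\mu)_0=0$. On $\Sym^1$, $\mu$ is zero, hence $(\bfi_\mu)_1=i$. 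For $\bfp_\mu=\bfp(\id+\mu\bfh)^{-1}$, the correction term is $\bfp\mu\bfh(\cdots)$. On $\Sym^{\le1}$, $\bfh$ lands in $\Sym^{\le1}$, where $\mu$ vanishes, so $(\bfp_\mu)_0=0$ and $(\bfp_\mu)_1=p$. The only real point to watch is the filtration: we need $p(l_0)\in F^1$, which holds because $p$ preserves the filtration.

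The last step, the weak-equivalence claim, is where I expect the main obstacle. I would argue on associated graded complexes: $\Gr(\bfp_\mu)_1=\Gr p$ and $\Gr(\bfi_\mu)_1=\Gr i$. Since $\id-ip=[l_1,h]$ with $h$ filtration-preserving, passing to $\Gr$ gives $\id-\Gr i\,\Gr p=[\Gr l_1,\Gr h]$ and $\Gr p\,\Gr i=\id$. On $\Gr$ the curvature $r=-l_2(l_0,\cdot)$ vanishes because $l_0\in F^1$, so these are genuine chain maps, and $\Gr\,l_1^\fh=\Gr(pl_1i)$ is the induced differential. This exhibits $\Gr i$ and $\Gr p$ as homotopy inverse quasi-isomorphisms. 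The care needed is to check that the $\Gr$ of the curved morphism condition holds, i.e.\ that $p l_1 - pl_1ip$ increases the filtration; this follows from the strong deformation retract identities modulo the curvature terms, which lie in $F^{+1}$.
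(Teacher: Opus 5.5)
Your proposal is correct and follows essentially the same route as the paper. Lemma \ref{bergetz}, together with the perturbation lemma (flat, since $D_\fg^2=0$), gives the curved $L_\infty$ structure on $\fh$ and the morphisms $\bfp_\mu,\bfi_\mu$; the low-order components are read off on $\Sym^{\le 1}$, where $\mu$ vanishes; and the weak-equivalence claim follows from the strong deformation retract $(\Gr\,p,\Gr\,i,\Gr\,h)$ of associated graded complexes.

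There is one wording slip: $D_\fh$ does not restrict to $D_R$ on $\uR$ (you yourself compute $D_\fh(1)=p(l_0)$); rather, it satisfies the Leibniz rule over $D_R$. Also, your final check holds exactly, not just modulo curvature: $pl_1h=-phl_1=0$ and $prh=phr=0$ give $pl_1=pl_1ip$.
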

	\begin{proof}
		By Lemma~\ref{bergetz}, $\fh$ carries a curved $L_\infty$ algebra structure with Chevalley--Eilenberg differential $D_\fh$, and $\bfp_\mu$ and $\bfi_\mu$ are well defined morphisms of curved $L_\infty$ algebras. The zeroth and first components of $\bfp_\mu$, $\bfi_\mu$, and $D_\fh$ can be read off from their action on $\fg[1]\subset \widehat{\Sym}_\uR(\fg[1])$ and $\fh[1]\subset \widehat{\Sym}_\uR(\fh[1])$, respectively.
		
		$\bfp_\mu$ and $\bfi_\mu$ are weak equivalences since the triple $(\Gr\,p, \Gr\,i, \Gr\,h)$ defines a strong deformation retract of complexes from $(\Gr\,\fg,\Gr\,l_1)$ to $(\Gr\,\fh,\Gr\,pl_1i)$.
	\end{proof}
	
	We end this appendix by emphasizing that our homotopy transfer of curved $L_\infty$ algebras keeps the curvature unchanged. Indeed, one has
	\[
	pi(l_0) = l_0, \qquad
	ip(l_0)= l_0 - [l_1,h](l_0)
	= l_0 - h(l_1(l_0))
	= l_0.
	\]
	%In other words, the curvature of $\fg$ must lie in the subspace $\fh \subset \fg$.

	%2602.24099

\end{document}